\newtheorem{theorem}{Theorem}[section]
\newtheorem{corollary}[theorem]{Corollary}
\newtheorem{lemma}[theorem]{Lemma}
\newtheorem{proposition}[theorem]{Proposition}
\theoremstyle{definition}
\newtheorem{remark}[theorem]{Remark}
\newcommand{\T}{\mathbb{T}}
\newcommand{\R}{\mathbb{R}}
\newcommand{\Z}{\mathbb{Z}}
\begin{document}

\title[Higher dimensional Benjamin-Ono and Zakharov-Kuznetsov equations] 
      {On the Cauchy problem for higher dimensional Benjamin-Ono and Zakharov-Kuznetsov equations}

\author[R. Schippa]{Robert Schippa}
\address{Fakult\"at f\"ur Mathematik, Universit\"at Bielefeld, Postfach 10 01 31, 33501 Bielefeld, Germany}
\email{robert.schippa@uni-bielefeld.de}

\begin{abstract}
A family of dispersive equations is considered which links a higher-dimensional Benjamin-Ono equation and the Zakharov-Kuznetsov equation. For these fractional Zakharov-Kuznetsov equations new well-posedness results are proved using transversality and localization of time to small frequency dependent time intervals.
\end{abstract}

\thanks{Financial support by the German Science Foundation (IRTG 2235) is gratefully acknowledged.}
\maketitle

\section{Introduction}
In this note well-posedness of the higher-dimensional fractional Zakharov-Kuznetsov equations
\begin{equation}
\label{eq:fractionalBenjaminOnoEquation}
\left\{\begin{array}{cl}
\partial_t u + \partial_{x_1} (-\Delta)^{a/2} u &= u \partial_{x_1} u, \quad (t,x) \in \R \times \mathbb{K}^n, \quad 1 \leq a \leq 2 \\
u(0) &= u_0 \in H^s(\mathbb{K}^n), \end{array} \right.
\end{equation}
is discussed, where $n \geq 2$ and $ \mathbb{K} \in \{ \mathbb{R}, \mathbb{T} \}$.\\
In the one-dimensional case \eqref{eq:fractionalBenjaminOnoEquation} becomes the Benjamin-Ono equation (cf. \cite{Benjamin1974}, see e.g. \cite{Saut2018} for a recent survey) for $a=1$ and the Korteweg-De Vries equation (see \cite{KillipVisan2018} for the sharp global well-posednesss result) for $a=2$. In the one-dimensional case the equations are best understood and extensively studied.\\
In higher dimensions \eqref{eq:fractionalBenjaminOnoEquation} yields a generalization of the Benjamin-Ono equation for $a=1$ (cf. \cite{LinaresRianoRogersWright2019,Maris2002,PelinovskyShrira1995}) and for $a=2$ the Zakharov-Kuznetsov equation (cf. \cite{ZakharovKuznetsov1974}) is recovered.\\
By local well-posedness we mean that for any $u_0 \in H^s$ there is $T=T(\Vert u_0 \Vert_{H^s})$ such that $S_T^\infty: H^\infty \rightarrow C([0,T],H^\infty)$ extends uniquely to a continuous mapping $S_T^s: H^s \rightarrow C([0,T],H^s)$.\\
The energy method \cite{BonaSmith1975} yields well-posedness for $s>\frac{n+2}{2}$, but neglects the dispersive properties. These are clearly stronger in Euclidean space than in the fully periodic case. We discuss solutions in Euclidean space first, for which we can show stronger well-posedness results consequently.\\
Already in the one-dimensional case it is well-known that the data-to-solution mapping for dispersion coefficients $1 \leq a <2 $ is not uniformly continuous (cf. \cite{KochTzvetkov2005,HerrIonescuKenigKoch2010,MolinetSautTzvetkov2001}).\\
Also, in two-dimensions it was proved for $a=1$ in \cite{LinaresRianoRogersWright2019} that the data-to-solution mapping is not $C^2$. Local well-posedness was proved for $a=1$ provided that $s>5/3$ in \cite{LinaresRianoRogersWright2019} using shorttime linear Strichartz estimates (see also \cite{KochTzvetkov2003}).\\
Here, we improve the local well-posedness for $n=2$ and interpolate between $a=1$ and $a=2$ to recover in the limiting case the currently best local well-posedness for the Zakharov-Kuznetsov equation $s>1/2$ (cf. \cite{GruenrockHerr2014,MolinetPilod2015}) in two dimensions and $s>1$ in three dimensions \cite{MolinetPilod2015,RibaudVento2012}. The results in higher dimensions seem to be new for $1 < a \leq 2$.\\
Here, we use transversality and localization of time to small frequency dependent time intervals (cf. \cite{IonescuKenigTataru2008,rsc2018BilinearStrichartzEstimates}) to prove the following theorem:
\begin{theorem}
\label{thm:LocalWellposednessEuclideanSpace}
Let $\mathbb{K} = \mathbb{R}$, $1 \leq a < 2$ and $s>\frac{n+3}{2}-a$. Then \eqref{eq:fractionalBenjaminOnoEquation} is locally well-posed. 
\end{theorem}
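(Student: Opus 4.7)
The plan is to follow the short-time $X^{s,b}$ framework of Ionescu-Kenig-Tataru, adapted here to the dispersion symbol $\phi(\xi) = \xi_1 |\xi|^a$. One builds a trio of spaces: $F^s$ for the solution, $N^s$ for the nonlinearity, and an energy space $E^s$, whose $N$-th Littlewood-Paley pieces are controlled in $U^2_\phi/V^2_\phi$-type norms over intervals of length $T_N = N^{-(2-a)}$. Contraction then reduces to the linear estimate $\|u\|_{F^s} \lesssim \|u\|_{E^s} + \|(\partial_t + \partial_{x_1}(-\Delta)^{a/2}) u\|_{N^s}$, the a priori energy estimate $\|u\|_{E^s}^2 \lesssim \|u_0\|_{H^s}^2 + \|u\|_{F^s}^2 \|u\|_{E^s}$, and the bilinear nonlinear estimate $\|\partial_{x_1}(uv)\|_{N^s} \lesssim \|u\|_{F^s}\|v\|_{F^s}$.

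The core of the proof is the bilinear estimate. After a Littlewood-Paley decomposition one reduces to bounding $\|P_N \partial_{x_1}(P_{N_1} u\, P_{N_2} v)\|_{N^s}$ for dyadic $N_1 \geq N_2$. The main tool is a bilinear Strichartz estimate for free solutions, obtained by exploiting transversality of wave packets: since
\begin{equation*}
\nabla \phi(\xi) = |\xi|^a e_1 + a \xi_1 |\xi|^{a-2} \xi,
\end{equation*}
the difference of group velocities $|\nabla\phi(\xi) - \nabla\phi(\eta)|$ is generically of order $N_{\max}^a$, so after decomposing each dyadic shell into angular caps of suitable size one obtains transversal pairs on which the standard $L^2$-bilinear bound applies. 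This yields an estimate of the form
\begin{equation*}
\|P_{N_1} e^{it\phi(D)} u_0 \cdot P_{N_2} e^{it\phi(D)} v_0\|_{L^2_{t,x}} \lesssim N_{\max}^{\alpha} N_{\min}^{\beta} \|P_{N_1} u_0\|_{L^2} \|P_{N_2} v_0\|_{L^2},
\end{equation*}
with the exponents dictated by the resonance function $\Phi(\xi_1,\xi_2) = \phi(\xi_1+\xi_2) - \phi(\xi_1) - \phi(\xi_2)$. Transfer to $U^2/V^2$ spaces followed by summation of the Littlewood-Paley pieces gives the bilinear estimate, the derivative loss from $\partial_{x_1}$ and the accumulated summation losses being absorbed by the regularity $s > (n+3)/2 - a$ and the short-time factor $T_N^{1/2}$.

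The most delicate case is the high-low interaction, where the output frequency is comparable to the larger input and the resonance function no longer controls the modulations. This is where the short-time localization is essential: on the interval of length $T_N = N^{-(2-a)}$ the low-frequency factor is essentially frozen as a transport, and after symmetrization/integration by parts one reduces to a further bilinear expression amenable to the energy estimate. The main obstacle is the degeneration of transversality as $a \to 1$: the Hessian of $\phi$ develops a vanishing direction along the propagation axis, reflecting the derivative loss of the Benjamin-Ono equation, so $T_N$ must shrink and the resulting logarithmic/summation losses are precisely balanced at the regularity threshold $s > (n+3)/2 - a$.

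Finally, uniqueness and continuous dependence on the initial data follow by a Bona-Smith argument: one mollifies $H^s$ data to $H^\infty$, propagates the a priori bound, and controls differences of solutions in a weaker norm by the same bilinear estimate. This upgrades the a priori bound to full local well-posedness in the sense defined in the introduction.
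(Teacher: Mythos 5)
Your proposal follows the same route as the paper: short-time $U^2/V^2$-spaces adapted to the flow with localization window $T_N = N^{-(2-a)}$, a transversality-based bilinear Strichartz estimate, shorttime nonlinear and energy estimates, and a Bona--Smith argument for continuity. The overall architecture is correct, but two structural details are off or missing.

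First, you attribute the threshold $s > (n+3)/2 - a$ to the bilinear estimates and the associated summation losses. In fact the bilinear nonlinear estimate $\Vert \partial_{x_1}(uv)\Vert_{N^s_a} \lesssim \Vert u\Vert_{F^s_a}\Vert v\Vert_{F^s_a}$ already closes at the better threshold $s > (n-1)/2$; it is the energy estimate $\Vert u\Vert^2_{E^s(T)} \lesssim \Vert u_0\Vert^2_{H^s} + T\Vert u\Vert_{F^s(T)}^3$ (needed because sharp time cut-offs destroy the semi-group structure) that forces $s > s_a = (n+3)/2 - a$. In the energy estimate the high-low interaction yields, after integration by parts and the bilinear Strichartz bound, a factor $N_2^{s_a}(N_2/N_1)^{a-1}$, which is exactly where the exponent $(n+3)/2-a$ appears; it does not arise from the contraction estimate.

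Second, you treat the $High \times Low \rightarrow High$ interaction as the only delicate case, but the $High \times High \rightarrow High$ interaction also requires a non-trivial argument that is not covered by the separated-frequency transversality you invoke. For $n=2$, one must prove a dedicated transversality statement for three frequencies of comparable size summing to zero (Proposition \ref{prop:HighHighHighInteraction} in the paper): one shows that among $\xi_1,\xi_2,\xi_3$ there are always two with $|v_a(\xi_i)-v_a(\xi_j)| \gtrsim N^a$, by a case analysis on which coordinates of the $\xi_i$ dominate. For $n \geq 3$ this angular-decomposition argument breaks down, and the paper instead uses linear $L^4_{t,x}$-Strichartz estimates (Proposition \ref{prop:LinearStrichartzEstimates}) to close the $High\times High \rightarrow High$ case. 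Without either of these ingredients, the nonlinear estimate would not close.
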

We sketch the method of proof. Let $N \in 2^{\mathbb{N}_0}$ denote a dyadic number and $P_N$ the inhomogeneous Littlewood-Paley projector, i.e.,
\begin{equation*}
(P_N f) \widehat (\xi) = \begin{cases}
\Phi(\xi) \hat{f}(\xi), \quad N=1 \\
\chi_N(\xi) \hat{f}(\xi), \quad \text{else,}
\end{cases}
\end{equation*}
where $\Phi, \chi_N \in C^\infty_c$, $supp(\Phi) \subseteq B(0,2)$, $supp(\chi_N) \subseteq B(0,2N) \backslash B(0,N/2)$ and $\Phi + \sum_N \chi_N \equiv 1$.\\
Further, let $S_a(t)$ denote the linear propagator of \eqref{eq:fractionalBenjaminOnoEquation}, that is
\begin{equation*}
\widehat { S_a(t) u_0 } (\xi) = e^{-i t \xi_1 |\xi|^a} \hat{u}_0(\xi)
\end{equation*}
The most problematic interaction happens in case a low frequency interacts with a high frequency because the derivative nonlinearity
\begin{equation*}
\partial_{x_1} (P_N u P_K u) \quad (K \ll N)
\end{equation*}
possibly requires one to recover a whole derivative. The derivative loss is partially ameliorated by the following bilinear Strichartz estimate:
\begin{proposition}
\label{prop:BilinearStrichartzEstimateHighLow}
Let $n \geq 2, \; K,N \in 2^{\mathbb{N}_0}, \; K \ll N$. Then, we find the following estimate to hold:
\begin{equation}
\label{eq:BilinearStrichartzEstimateHighLow}
\Vert P_N S_a(t) u_0 P_K S_a(t) v_0 \Vert_{L^2_{t,x}(\R \times \R^n)} \lesssim \left( \frac{K^{n-1}}{N^a} \right)^{1/2} \Vert P_N u_0 \Vert_{L^2} \Vert P_K v_0 \Vert_{L^2}
\end{equation} 
\end{proposition}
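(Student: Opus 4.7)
The plan is to prove \eqref{eq:BilinearStrichartzEstimateHighLow} via a change-of-variables argument that exploits the transversality of the dispersion relation at the two separated frequency scales. Let $p(\xi) = \xi_1 |\xi|^a$ denote the symbol of the propagator and set $f = \chi_N \hat{u}_0$, $g = \chi_K \hat{v}_0$. Using Plancherel in space-time together with duality, the estimate \eqref{eq:BilinearStrichartzEstimateHighLow} is equivalent to the trilinear bound
\begin{equation*}
\left| \iint f(\xi) g(\eta) \overline{\tilde h(\xi + \eta,\, p(\xi) + p(\eta))}\, d\xi\, d\eta \right| \lesssim \left(\frac{K^{n-1}}{N^a}\right)^{\!1/2} \|f\|_{L^2} \|g\|_{L^2} \|h\|_{L^2}
\end{equation*}
for arbitrary $h \in L^2(\R \times \R^n)$, where $\tilde h$ is the space-time Fourier transform of $h$ (a sign convention in the second slot is immaterial). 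A single Cauchy-Schwarz in the $(\xi,\eta)$ variables then reduces matters to showing
\begin{equation*}
\iint \bigl|\tilde h\bigl(\xi + \eta,\, p(\xi) + p(\eta)\bigr)\bigr|^2\, d\xi\, d\eta \;\lesssim\; \frac{K^{n-1}}{N^a}\, \|h\|_{L^2}^2.
\end{equation*}

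For this, I would introduce the change of variables $\Psi : (\xi,\eta) \mapsto \bigl(\xi + \eta,\, \eta_2, \ldots, \eta_n,\, p(\xi) + p(\eta)\bigr)$ on $\R^{2n}$. Elementary block row operations that use the $\zeta_j$ and $\eta_j$ rows (for $j \geq 2$) to clear the $\partial_{\xi_j} p, \partial_{\eta_j} p$ entries of the $\tau$-row collapse the Jacobian of $\Psi$ to the $2 \times 2$ determinant in $(\xi_1,\eta_1)$, yielding
\begin{equation*}
|\det D\Psi(\xi,\eta)| \;=\; |\partial_{\xi_1} p(\xi) - \partial_{\eta_1} p(\eta)|.
\end{equation*}
A direct differentiation gives $\partial_{\xi_1} p(\xi) = |\xi|^{a-2}(|\xi|^2 + a \xi_1^2)$, so $|\xi|^a \leq \partial_{\xi_1} p(\xi) \leq (1+a) |\xi|^a$ (and analogously for $\eta$); both quantities are therefore positive. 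On $\text{supp}(f) \times \text{supp}(g)$, where $|\xi| \sim N$ and $|\eta| \sim K \ll N$, this yields $\partial_{\xi_1} p(\xi) \sim N^a$ and $\partial_{\eta_1} p(\eta) \sim K^a$; hence $|\det D\Psi| \sim N^a$, and the strict monotonicity of $\eta_1 \mapsto p(\xi) + p(\eta)$ (at fixed $\zeta$ and $\eta' := (\eta_2, \ldots, \eta_n)$) gives injectivity of $\Psi$ on the support.

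Applying the change of variables together with Plancherel for $h$ then closes the argument: the integrand $|\tilde h(\zeta,\tau)|^2$ is independent of $\eta'$, the admissible $\eta'$ lies in $[-2K, 2K]^{n-1}$ of volume $\lesssim K^{n-1}$, and dividing by $|\det D\Psi| \sim N^a$ produces exactly the claimed factor $K^{n-1}/N^a$. I expect the only delicate step to be the transversality bound $|\det D\Psi| \gtrsim N^a$ on the support; this works out cleanly here because the two summands $|\xi|^a$ and $a \xi_1^2 |\xi|^{a-2}$ in $\partial_{\xi_1} p(\xi)$ are both nonnegative, so no cancellation can occur and the gap to $\partial_{\eta_1} p(\eta) \sim K^a$ is preserved throughout the frequency regime $K \ll N$.
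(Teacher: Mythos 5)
Your proof is correct, and it takes a somewhat different route from the paper's. The paper proves this by first decomposing the high-frequency annulus $B_{2N}\setminus B_{N/2}$ into finitely overlapping balls $R_L$ of radius $K$, using almost orthogonality to reduce to a single ball, and then invoking the abstract bilinear transversality estimate of Proposition~\ref{prop:EuclideanTransversality} (stated but not proved in the paper) with $r=K$ and transversality gap $\sim N^a$. You instead give a direct, self-contained computation: Plancherel plus duality reduce matters to an $L^2$ bound for $\iint |\tilde h(\xi+\eta,\,p(\xi)+p(\eta))|^2\,d\xi\,d\eta$, and the change of variables $(\xi,\eta)\mapsto(\xi+\eta,\,\eta',\,p(\xi)+p(\eta))$ with Jacobian $|\partial_{\xi_1}p(\xi)-\partial_{\eta_1}p(\eta)|\sim N^a$ together with the $\lesssim K^{n-1}$ volume of the admissible $\eta'$ closes the argument. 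In effect you are inlining a proof of Proposition~\ref{prop:EuclideanTransversality} specialized to this geometry, and the $K$-ball decomposition becomes unnecessary because the factor $K^{n-1}$ arises directly from the measure of the $\eta'$ fiber rather than from localizing the high frequency. Your computation of the Jacobian and the transversality bound $\partial_{\xi_1}p(\xi)=|\xi|^{a-2}(|\xi|^2+a\xi_1^2)\in[|\xi|^a,(1+a)|\xi|^a]$ are both correct and match \eqref{eq:groupVelocities}.

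One small point worth tightening: the map $\eta_1\mapsto p(\xi)+p(\eta)$ at fixed $(\zeta,\eta')$ is strictly monotone on the support since its derivative $\partial_{\eta_1}p(\eta)-\partial_{\xi_1}p(\xi)\sim -N^a$ has a fixed sign there, but the support itself (the set of $\eta_1$ with $|\eta|\sim K$, $|\zeta-\eta|\sim N$) can be a union of up to two intervals. Hence $\Psi$ is injective on each component but is only boundedly-many-to-one globally. This costs only a harmless absolute constant in the change-of-variables step, so the estimate stands, but ``injectivity'' should be ``boundedly finite-to-one''.
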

This proposition is an easy consequence of general transversality considerations (cf. \cite{Bourgain1998RefinementsStrichartzInequality}).\\
Apparently, this is still insufficient to recover the derivative loss for $1 \leq a <2$. To overcome the gap we additionally localize time in a frequency dependent way (cf. \cite{IonescuKenigTataru2008}).\\
In the following we motivate at which frequency dependent time localization we can treat the most problematic $High \times Low \rightarrow High$-interaction utilizing \eqref{eq:BilinearStrichartzEstimateHighLow}. For $K \ll N$ one finds
\begin{equation}
\label{eq:ShorttimeBilinearAmelioration}
\begin{split}
&\Vert \partial_{x_1} (P_N S_a(t) u_0 P_K S_a(t) v_0) \Vert_{L^1([0,T];L^2(\R^n))} \\
&\lesssim N |T|^{1/2} \Vert P_N S_a(t) u_0 P_K S_a(t) v_0 \Vert_{L^2([0,T]; L_x^2(\R^n)} \\
&\lesssim |T|^{1/2} N \left( \frac{K^{n-1}}{N^a} \right)^{1/2} \Vert P_N u_0 \Vert_{L^2(\R^n)} \Vert P_K v_0 \Vert_{L_x^2(\R^n)}
\end{split}
\end{equation}
This suggests that for $T(N)=N^{2-a}$ this peculiar interaction can be estimated for $s>(n-1)/2$, which will be carried out in Section \ref{section:NonlinearEstimates}.
In the one-dimensional case this had been done for dispersion generalized Benjamin-Ono equations (cf. \cite{Guo2012DispersionGeneralizedBenjaminOno,GuoPengWangWang2011}).\\
This argument will be sufficient to handle $High \times Low \rightarrow High$-interactions and $High \times High \rightarrow High$-interactions for $n=2$. For $High \times High \rightarrow High$-interactions at $n \geq 3$ linear Strichartz estimates (cf. \cite{LinaresRianoRogersWright2019}) are used:
\begin{proposition}
\label{prop:LinearStrichartzEstimates}
Let $n \geq 3$, $1 \leq a \leq 2$ and $2 \leq p,q \leq \infty$, $p \neq \infty$. Then, we find the following estimate to hold
\begin{equation}
\label{eq:LinearStrichartzEstimates}
\begin{split}
\Vert S_a(t) f \Vert_{L_t^q(\R,L_x^p(\R^n))} &\lesssim \Vert f \Vert_{\dot{H}^s(\R^n)} \\
\Vert S_a(t) f \Vert_{L_t^q([0,T],L_x^p(\R^n))} &\lesssim_{T} \Vert f \Vert_{H^s(\R^n)}
\end{split}
\end{equation}
provided that $\frac{2}{q} + \frac{2}{p} = 1$ and $s=n \left( \frac{1}{2} - \frac{1}{p} \right) - \frac{a+1}{q}$.
\end{proposition}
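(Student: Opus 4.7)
The plan is to derive the Strichartz estimates via the Keel--Tao $TT^*$ argument applied to a sharp frequency-localized dispersive bound, following the approach of \cite{LinaresRianoRogersWright2019}.

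The first step is to establish the dispersive estimate
\[
\|P_N S_a(t)\|_{L^1_x \to L^\infty_x} \lesssim N^{n-a-1}|t|^{-1}.
\]
After rescaling $\xi \mapsto N\xi$, this reduces to the unit-frequency bound $|K_{1,s}(x)| \lesssim |s|^{-1}$, which I would prove by stationary phase applied to the phase $\xi_1|\xi|^a$. A direct computation shows that the Hessian of this phase is nondegenerate on $\{|\xi|\sim 1,\ \xi_1 \neq 0\}$ but its rank drops from $n$ to $2$ on the hyperplane $\{\xi_1 = 0\}$. To handle this degeneracy I would decompose $\{|\xi|\sim 1\}$ into angular slabs $\{|\xi_1| \sim 2^{-j}\}$ for $j \geq 0$ and sum sector-by-sector stationary phase contributions dyadically.

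Given this dispersive estimate, the Keel--Tao argument with effective dispersive exponent $\sigma = 1$ and constant $C_N = N^{n-a-1}$ yields the frequency-localized Strichartz
\[
\|S_a(t) P_N f\|_{L^q_t L^p_x} \lesssim N^{(n-a-1)/q}\|P_N f\|_{L^2_x},
\]
for $(q,p)$ on the admissible curve $2/q + 2/p = 1$ with $q, p \geq 2$ and $p \neq \infty$. A Littlewood--Paley square-function argument, justified for $q, p \geq 2$ by Minkowski's inequality, then upgrades this to
\[
\|S_a(t) f\|_{L^q_t L^p_x} \lesssim \|f\|_{\dot{H}^{(n-a-1)/q}},
\]
and under the admissibility constraint the identity $(n-a-1)/q = n(1/2 - 1/p) - (a+1)/q$ recovers the claimed regularity $s$. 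The inhomogeneous version on $[0,T]$ follows by splitting $f = P_{\leq 1}f + P_{> 1}f$ and invoking the trivial energy bound on the low-frequency piece, picking up an additional $T^{1/q}$ factor.

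The main technical obstacle is the stationary phase analysis in the first step: the Hessian degeneracy on $\{\xi_1 = 0\}$ forces the angular decomposition, and the resulting dyadic summation requires a careful balance between the oscillatory decay $|s|^{-n/2}$ and the growing factor $2^{j(n-2)/2}$ coming from the shrinking small Hessian eigenvalues on the $j$-th slab. The dimension assumption $n \geq 3$ enters through the geometry of the degenerate locus and ensures the admissible curve $2/q + 2/p = 1$ lies below the Schr\"odinger admissibility threshold, so that the Sobolev loss after summation is nonnegative.
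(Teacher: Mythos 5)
Your Keel--Tao $TT^*$ scaffolding and the Littlewood--Paley summation agree with the paper's concluding argument, and the exponent arithmetic $(n-a-1)/q = n\bigl(\tfrac12-\tfrac1p\bigr)-\tfrac{a+1}{q}$ under $2/q+2/p=1$ is correct. However, your proof of the dispersive estimate is built on a false premise about where the Hessian of $\varphi_a(\xi)=\xi_1|\xi|^a$ degenerates. You assert that $D^2\varphi_a$ is nondegenerate on $\{|\xi|\sim 1,\ \xi_1\neq 0\}$, with rank dropping to $2$ only on $\{\xi_1=0\}$. For $a=2$ one computes directly
\begin{equation*}
\det D^2\varphi_2(\xi) \,=\, 2^n\,\xi_1^{\,n-2}\bigl(3\xi_1^2-|\xi'|^2\bigr),
\end{equation*}
so the Hessian also degenerates on the cone $\{|\xi'|=\sqrt{3}\,|\xi_1|\}$, which lives at $|\xi_1|\sim 1$ and is entirely missed by your slabs $\{|\xi_1|\sim 2^{-j}\}$; an analogous degenerate set appears for every $a\in[1,2]$. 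On that cone the rank drops only to $n-1$, and the local decay from stationary phase is $|t|^{-(n-1)/2}$. It is this rank-$(n-1)$ degeneracy, not the rank-$2$ one on $\{\xi_1=0\}$, that actually forces the hypothesis $n\geq 3$: $(n-1)/2\geq 1$ fails exactly at $n=2$. Your heuristic balance $|t|^{-n/2}\cdot 2^{j(n-2)/2}$ versus the slab measure $2^{-j}$ only addresses the hyperplane $\{\xi_1=0\}$ and cannot recover $|t|^{-1}$ near the cone.

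The paper avoids this issue altogether by passing to spherical coordinates, writing the kernel as $\int\rho(r)\,\hat\sigma(y_{x,t}(r))\,dr$ and analyzing the zeros of $f'(r)$ for $f(r)=(tr^{a+1}+x_1r)^2+r^2\|x'\|^2$; what you see as a conical degeneracy in $\xi$ appears there as the coalescence of the two roots $r_\pm$, and the bound $|F^1|\lesssim|t|^{-1}$ hinges on $|f''|$ being bounded below at those roots. The extra asymptotic expansion of $\hat\sigma$ that the paper invokes for $n=3$ is needed precisely because the decay $|y|^{-(n-1)/2}$ is borderline in that dimension. If you want to push through a direct stationary-phase argument, you would need a second conical decomposition near the zero set of $\partial^2_1\varphi_a$ (equivalently, near $3\xi_1^2=|\xi'|^2$ for $a=2$) and to track the $|t|^{-(n-1)/2}$ loss there; as written, the proposal has a genuine gap in the dispersive estimate.
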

Since localization in time erases the dependence on the initial data, one still has to carry out energy estimates, which will give a worse regularity threshold to close the argument, namely $s>\frac{n+3}{2}-a$. This will be done in Section \ref{section:EnergyEstimates}.\\
We will use a variant of the function spaces from \cite{IonescuKenigTataru2008,ChristHolmerTataru2012} to prove a priori estimates in the first step, next, $L^2$-Lipschitz dependence for initial data of higher regularity is discussed and finally, continuous dependence is proved by the Bona-Smith argument (cf. \cite{BonaSmith1975}).\\
The strategy of the proof closely follows the arguments from \cite{rsc2018BilinearStrichartzEstimates} where the argument was applied to periodic solutions.\\
The approach from \cite{rsc2018BilinearStrichartzEstimates} does not apply directly to periodic solutions because this would require the dispersion relation to split
\begin{equation*}
\varphi(\xi) = \sum_{i=1}^n \chi(\xi_i)
\end{equation*}
This is true for another possible generalization of the (fractional) Benjamin-Ono equation
\begin{equation}
\label{eq:multiDirectionalBenjaminOnoEquation}
\partial_t u + \sum_{i=1}^n \partial_{x_i} |D_{x_i}|^{a} u = \sum_{i=1}^n \partial_{x_i} (u^2)/2, \quad (t,x) \in \R \times \mathbb{K}^n,
\end{equation}
where $\mathbb{K} \in  \{\R; \T\}$.\\
Here, for $n=2$, $a=2$ we recover a Cauchy problem which is equivalent to the Zakharov-Kuznetsov equation (cf. \cite{BenArtziKochSaut2003,GruenrockHerr2014}).\\
Another Benjamin-Ono-Zakharov-Kuznetsov equation was considered in \cite{RibaudVento2017}:
\begin{equation}
\label{eq:fractionalBenjaminOnoEquationII}
\left\{\begin{array}{cl}
\partial_t u - \partial_{x_1} D_{x_1}^a u + \partial_{x_1} \partial_{x_2}^2 u &= u \partial_{x_1} u, \quad (t,x) \in \R \times \R^2, \quad 1 \leq a \leq 2 \\
u(0) &= u_0 \in H^s(\R^n), \end{array} \right.
\end{equation}
Here, only dispersion in the $x_1$-component was decreased. Local and global well-posedness results for \eqref{eq:fractionalBenjaminOnoEquationII} were also proved via frequency dependent time localization.\\
Lastly, we remark that the local well-posedness result from Theorem \ref{thm:LocalWellposednessEuclideanSpace} gives global well-posedness in the energy space $H^{a/2}(\R^2)$ for sufficiently large $a$ in the two-dimensional case due to conservation of energy
\begin{equation*}
E(u) = \int_{\R^n} |D^{a/2} u |^2 - \frac{1}{3} u^3(t,x) dx
\end{equation*}
Another conserved quantity is the mass
\begin{equation*}
M(u) = \int_{\R^n} u^2(t,x) dx,
\end{equation*}
but a well-posedness result in $L^2$ seems to be far beyond the methods of this paper.\\
Thus, iteration of Theorem \ref{thm:LocalWellposednessEuclideanSpace} for $s=a/2$ yields:
\begin{corollary}
Let $n=2, \mathbb{K} = \mathbb{R}$ and $a>5/3$. Then, \eqref{eq:fractionalBenjaminOnoEquation} is globally well-posed for $s=a/2$.
\end{corollary}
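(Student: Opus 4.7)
The plan is to iterate Theorem \ref{thm:LocalWellposednessEuclideanSpace} at the regularity $s = a/2$ and use the conserved energy $E$ and mass $M$ to keep the $H^{a/2}$-norm bounded uniformly in time. For $n=2$ the local well-posedness threshold reads $s > 5/2 - a$, and the inequality $a/2 > 5/2 - a$ is equivalent to $a > 5/3$, so the hypothesis guarantees that Theorem \ref{thm:LocalWellposednessEuclideanSpace} produces a local solution on a time interval of length $T = T(\Vert u_0 \Vert_{H^{a/2}})$.

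To turn local into global existence I would establish an a priori bound for $\Vert u(t) \Vert_{H^{a/2}}$ that is uniform in $t$. Mass conservation handles the $L^2$ part directly, $\Vert u(t) \Vert_{L^2} = \Vert u_0 \Vert_{L^2}$. For the homogeneous piece I would rearrange the energy identity as
\begin{equation*}
\Vert D^{a/2} u(t) \Vert_{L^2}^2 = 2 E(u_0) + \tfrac{2}{3} \int_{\R^2} u^3(t,x)\, dx,
\end{equation*}
and control the cubic term by the two-dimensional Gagliardo--Nirenberg inequality
\begin{equation*}
\Vert u \Vert_{L^3(\R^2)} \lesssim \Vert D^{a/2} u \Vert_{L^2}^{\theta}\, \Vert u \Vert_{L^2}^{1-\theta}, \qquad \theta = \tfrac{2}{3a},
\end{equation*}
which is legitimate because $a > 2/3$ yields the embedding $H^{a/2}(\R^2) \hookrightarrow L^3(\R^2)$. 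Cubing produces exponent $3\theta = 2/a$ on $\Vert D^{a/2} u \Vert_{L^2}$, and since $a > 5/3 > 1$ one has $2/a < 2$; Young's inequality then gives
\begin{equation*}
\Bigl| \int_{\R^2} u^3 \, dx \Bigr| \leq \tfrac{1}{2} \Vert D^{a/2} u \Vert_{L^2}^2 + C\bigl(\Vert u_0 \Vert_{L^2}\bigr).
\end{equation*}
Reinserting this bound into the energy identity produces $\Vert u(t) \Vert_{H^{a/2}} \leq C(E(u_0), \Vert u_0 \Vert_{L^2})$ on the entire existence interval. Because the local existence time in Theorem \ref{thm:LocalWellposednessEuclideanSpace} depends only on the $H^{a/2}$-norm of the initial data, this uniform control permits iteration on intervals of a fixed minimal length and extends the solution to all of $\R$.

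The main obstacle is not the interpolation above but the rigorous use of the conservation laws at the limiting regularity $s = a/2$. To justify it I would approximate $u_0$ in $H^{a/2}$ by smooth data $u_0^{(k)} \in H^\infty$, invoke conservation of $E$ and $M$ for the classical solutions $u^{(k)}$, and pass to the limit with the help of the $L^2$-Lipschitz estimate and the continuous dependence produced by the Bona--Smith step of Theorem \ref{thm:LocalWellposednessEuclideanSpace}. This yields $E(u(t)) = E(u_0)$ and $M(u(t)) = M(u_0)$ for the $H^{a/2}$-solution, after which the iteration argument is routine and establishes global well-posedness for every $a > 5/3$.
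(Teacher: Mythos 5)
Your argument is correct and follows the same route the paper sketches: check that $s=a/2$ clears the local well-posedness threshold $5/2-a$ precisely when $a>5/3$, then use conservation of mass and energy to bound the $H^{a/2}$-norm uniformly in time and iterate the local theory. The paper leaves the Gagliardo--Nirenberg absorption of the cubic term and the justification of the conservation laws at the rough regularity implicit, so your spelled-out version is a faithful expansion rather than a different proof.
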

We turn to a discussion of the fully periodic case. In the two-dimensional case the anisotropic Sobolev space $H^{s,0}(\T^2)$ is also considered for the Zakharov-Kuznetsov equation, where
\begin{equation*}
\Vert f \Vert^2_{H^{s,0}(\T^2)} = \sum_{(\xi,\eta) \in \mathbb{Z}^2} (1+|\xi|^2)^{s} |\hat{f}(\xi,\eta)|^2
\end{equation*}
In previous works (\cite{LinaresPantheeRobertTzvetkov2018,BustamanteJimenezUrreaMejia2019}) local well-posedness has only been considered in isotropic Sobolev spaces, but since this is a larger space we also consider well-posednesss for these initial data. We prove the following theorem:
\begin{theorem}
\label{thm:LocalWellposednessPeriodicCase}
Let $\mathbb{K} = \mathbb{T}$, $n=2$ and $1 \leq a \leq 2$ or $n \geq 2$, $a=2$ and $s>(n+1)/2$. Then \eqref{eq:fractionalBenjaminOnoEquation} is locally well-posed in $H^s(\T^n)$ and for $n=2$, $a=2$ \eqref{eq:fractionalBenjaminOnoEquation} is locally well-posed in $H^{s,0}(\T^2)$.
\end{theorem}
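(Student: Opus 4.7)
The plan is to mirror the framework used for Theorem \ref{thm:LocalWellposednessEuclideanSpace}: build an $F^s$-type space of $U^2/V^2$ atoms on the frequency-dependent time intervals suggested by \eqref{eq:ShorttimeBilinearAmelioration} (so no time localization at all in the ZK case $a=2$), bound the nonlinearity in a dual $N^s$-space, and close with an energy estimate. Local well-posedness then follows in the by now standard three steps: a priori estimate in the $F^s$-space, $L^2$-Lipschitz dependence at higher regularity, and continuous dependence via Bona--Smith approximation.

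The key technical input is the periodic analogue of Proposition \ref{prop:BilinearStrichartzEstimateHighLow}. For $n=2$ I would prove it by a direct lattice point count in the spirit of Bourgain: on the short time window above, the $N$-level set of $\xi \mapsto \xi_1|\xi|^a$ intersects the $K$-cube in a tube of thickness $\sim 1$ transverse to $\partial_{\xi_1}$, since $|\nabla_\xi(\xi_1|\xi|^a)| \sim N^a$ there, and so the number of admissible Fourier modes is controlled by $K^{n-1}/N^a$ up to an at worst logarithmic loss absorbed by the $F^s$-summation. For $n \geq 3$, $a=2$ one proceeds analogously with the ZK phase $\xi_1(\xi_1^2+|\xi'|^2)$, or alternatively appeals to the periodic $L^4$-type Strichartz estimates already used in \cite{LinaresPantheeRobertTzvetkov2018, BustamanteJimenezUrreaMejia2019}. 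The anisotropic $H^{s,0}(\T^2)$-case in the ZK limit is handled by replacing isotropic Littlewood--Paley projectors with ones in $\xi_1$ only, exploiting that the resonance function $\xi_1|\xi|^2 - \eta_1|\eta|^2 - \zeta_1|\zeta|^2$ at $\xi = \eta + \zeta$ already provides coercivity purely in $\xi_1$-differences.

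With these bilinear bounds in hand the short-time nonlinear estimates for $High \times Low \to High$ and $High \times High \to High$ interactions proceed as in the proof of Theorem \ref{thm:LocalWellposednessEuclideanSpace}, while the Kato--Ponce-type commutator arising in the $H^s$-energy estimate on short windows gains one-half derivative over the naive energy method $s > (n+2)/2$, yielding the advertised threshold $s > (n+1)/2$. The main obstacle is precisely the one flagged just before \eqref{eq:multiDirectionalBenjaminOnoEquation}: the phase $\xi_1|\xi|^a$ does not split as $\sum_i \chi(\xi_i)$ on $\T^n$, so the clean Bourgain-type $L^4_{t,x}$-Strichartz estimates that drive the argument of \cite{rsc2018BilinearStrichartzEstimates} for the multi-directional equation \eqref{eq:multiDirectionalBenjaminOnoEquation} are not available here. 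One must instead rely on the weaker transversality-based bilinear bound above, and this is exactly why the periodic threshold $s > (n+1)/2$ ends up strictly worse than the Euclidean threshold $s > (n+3)/2 - a$ when $a > 1$.
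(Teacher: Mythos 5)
Your proposal diverges from the paper's argument in several substantive ways, and one of them is a genuine gap.

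The critical error is the claim that in the Zakharov-Kuznetsov case $a=2$ one can dispense with the frequency-dependent time localization. This is true in Euclidean space (the paper observes that for $\mathbb{K}=\R$, $a=2$ the short-time machinery is unnecessary and one reverts to the global-in-time Fourier restriction argument of Gr\"unrock--Herr), but it fails on $\T^n$. On the torus the linear flow has no dispersive decay, and the transversality encoded in Propositions \ref{prop:BilinearStrichartzEstimateHighLow} and \ref{prop:HighHighHighInteraction} does not translate into a global-in-time bilinear $L^2_{t,x}$ gain; the resonance function controls the Fourier support, but only on time intervals of length $\sim 2^{-k_{\max}}$ does the convolution estimate \eqref{eq:periodicTransversality} close. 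The paper makes this explicit: one \emph{always} localizes time reciprocally to the highest involved frequency in the periodic case, regardless of $a$, and this is precisely why the periodic threshold cannot be lowered below $s>(n+1)/2$ as $a$ increases, unlike the Euclidean threshold $s>(n+3)/2-a$. Your stated reason for the worse threshold (lack of $L^4$ Strichartz) is related but not the one that actually governs the exponent.

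Beyond that, your choice of function space and key lemma are also different from the paper's. The paper does not transport the $U^2/V^2$ machinery to $\T^n$; instead it uses short-time $X^{s,b}$-type spaces $X_k, F_k, N_k$ with explicit modulation dyadics in the style of Zhang's work on periodic KP-I, because the periodic estimates are proved directly on the space-time Fourier side via convolution bounds (Cauchy--Schwarz against second derivatives of the resonance function), not via atomic decomposition plus transfer. Concretely, the anisotropic estimate \eqref{eq:anisotropicEstimate} rests on $|\partial^2\Omega/\partial\eta_2^2|\sim|\xi_1|$, i.e.\ coercivity in the \emph{second} (unlocalized) variable, which is essentially opposite to your claim that the resonance provides coercivity in $\xi_1$-differences. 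Your lattice-point-count heuristic is in the right spirit but is not what is implemented, and for $n\ge 3$, $a=2$ the paper does not fall back on the $L^4$ Strichartz estimates of \cite{LinaresPantheeRobertTzvetkov2018,BustamanteJimenezUrreaMejia2019}; it instead proves a dedicated transversality lemma (Lemma \ref{lem:ZKTransversalityPeriodicCase}) for comparable ZK frequencies on $\Z^n$, feeding into the same convolution-estimate framework. The overall Bona--Smith three-step scheme you outline is correct and does match the paper's conclusion of the proof.
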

In case $n=2$ this improves the results from \cite{LinaresPantheeRobertTzvetkov2018,BustamanteJimenezUrreaMejia2019} where local well-posedness was proved in $H^s(\T^2)$ for $a \in \{1,2\}$ provided that $s>5/3$ for $a=2$ and $s>7/4$ for $a=1$. In these works shorttime linear Strichartz estimates were used. In the present work this result is modestly improved by transversality considerations and corresponding results are proven in higher dimensions. However, the covered regularities are still far from the energy space. To make further progress one probably needs a better comprehension of the resonance set which appears to be more delicate than for the Kadomtsev Petviashvili-equations (cf. \cite{Bourgain1993KPII,Zhang2016}).\\
The strategy of proof is the same as for solutions on Euclidean space: In suitable function spaces we will prove a priori estimates for solutions, Lipschitz continuous dependence for differences of solutions in $L^2$ for higher regular initial data and finally continuous dependence in $H^s$ by the Bona-Smith approximation. The conclusion of the proof is similar to the Euclidean case.\\
Key ingredient will be bilinear convolution estimates for the space-time Fourier transform of functions which will be localized in frequency and modulation. These will be derived in Subsection \ref{subsection:PeriodicBilinearEstimates}. Here, the transversality considerations from Euclidean space will again come into play. However, we always have to localize time reciprocally to the highest involved frequency so that transversality becomes observable. Therefore, we can not lower the regularity at which our method of proof yields local well-posedness as the dispersion coefficients increase compared to the Euclidean case.\\
After deriving these bilinear convolution estimates the argument follows the real line case. Thus, for the sake of clarity of presentation the details after the derivation of the bilinear convolution estimates will only be presented for the two-dimensional Zakharov-Kuznetsov equation.
\section{Linear Strichartz estimates}
This section is devoted to the proof of Proposition \ref{prop:LinearStrichartzEstimates} which was carried out for $a=1$ in \cite{LinaresRianoRogersWright2019}. The required modifications are easy, but the proof is contained for the sake of completeness. We start with a dispersive estimate:
\begin{proposition}
Let $a \geq 1$, $n\geq 3$ and $\psi: \R^n \rightarrow \R$ be a smooth radial function supported in $B_n(0,2) \backslash B_n(0,1/2)$. Then, we find the following estimate to hold:
\begin{equation}
\label{eq:kernelEstimate}
\left| \int \psi(|\xi|) e^{i(t \xi_1 |\xi|^a + x. \xi)} d\xi \right| \leq C |t|^{-1} 
\end{equation}
with $C$ only depending on $n$, $\psi$ and $a$.
\end{proposition}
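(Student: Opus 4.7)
The plan is to establish the $|t|^{-1}$ bound by stationary phase, exploiting that the Hessian of the phase $\varphi(\xi) := \xi_1 |\xi|^a$ has rank at least two throughout the annular support. Replacing the integrand by its complex conjugate we may assume $t>0$; denote the full phase by $\Phi(\xi) = t\varphi(\xi) + x \cdot \xi$.

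First I would dispose of the non-stationary regime $|x| \geq C_0|t|$ with $C_0$ large. Since $|\nabla\varphi| \lesssim 1$ on the annulus, we have $|\nabla\Phi| \geq |x| - C|t| \gtrsim |x|$, and iterated integration by parts against the operator dual to $e^{i\Phi}$ would yield $|I(t,x)| \lesssim_N (1+|x|)^{-N}$ for any $N$, so in particular $\lesssim |t|^{-1}$.

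For the stationary regime $|x| \lesssim |t|$, a direct computation gives
\begin{equation*}
\partial_i\partial_j \varphi(\xi) = a|\xi|^{a-2}\bigl(\delta_{1j}\xi_i + \delta_{1i}\xi_j + \xi_1 \delta_{ij}\bigr) + a(a-2)\xi_1|\xi|^{a-4}\xi_i\xi_j.
\end{equation*}
When $\xi_1 \neq 0$, the $(n-1)\times(n-1)$ principal block with $i,j \geq 2$ equals $a\xi_1|\xi|^{a-4}\bigl[|\xi|^2 I_{n-1} + (a-2)\xi'(\xi')^T\bigr]$, with eigenvalues $a\xi_1|\xi|^{a-2}$ (multiplicity $n-2$) and $a\xi_1|\xi|^{a-4}(\xi_1^2 + (a-1)|\xi'|^2)$, all non-vanishing for $a \in [1,2]$. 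At $\xi_1 = 0$ only the entries $\partial_1\partial_j \varphi = a|\xi|^{a-2}\xi_j$ survive; picking any $j$ with $\xi_j \neq 0$ furnishes a $2\times 2$ principal minor of determinant $\asymp 1$. Hence $\operatorname{rank} D^2\varphi \geq 2$ uniformly on the annular support.

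With this in hand I would apply a finite partition of unity, selecting in each chart a fixed $2$-plane $V \subset \R^n$ on which $|\det (D^2\varphi|_V)| \gtrsim 1$, and then apply two-dimensional stationary phase / Van der Corput along the directions in $V$ to extract the factor $|t|^{-1}$; the remaining $n-2 \geq 1$ transverse directions merely contribute the bounded measure of the annulus. The main obstacle is the rank drop at $\{\xi_1 = 0\}$, which precludes a single global choice of $V$; the partition of unity circumvents this at the cost of a constant $C = C(n, \psi, a)$.
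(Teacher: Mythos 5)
Your argument is correct in substance but proceeds along a genuinely different route from the paper's. The paper passes to spherical coordinates, writing the integral as $\int_0^\infty \rho(r)\,\hat\sigma(y_{x,t}(r))\,dr$ with $y_{x,t}(r)=(tr^{a+1}+x_1r,\,x'r)$, and then exploits the known decay $|\hat\sigma(y)|\lesssim(1+|y|)^{-\frac{n-1}{2}}$: for $n\geq 4$ a simple partition of the $r$-integration into the region where $|tr^{a+1}+x_1r|\leq 1$ (of measure $\lesssim|t|^{-1}$) and its complement suffices, while for $n=3$ the borderline decay forces the additional asymptotic expansion of $\hat\sigma$ and an integration-by-parts analysis of the oscillatory tail. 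Your approach instead works directly on the full $n$-dimensional oscillatory integral: after disposing of $|x|\gg|t|$ by non-stationary phase, you verify that $D^2\varphi$ has rank at least $2$ throughout the annulus (and your Hessian computation and eigenvalue analysis are correct --- at $\xi_1=0$ the Hessian collapses to an off-diagonal bordered form of exact rank $2$, and for $\xi_1\neq 0$ the lower $(n-1)\times(n-1)$ block already has full rank $n-1\geq 2$), and then invoke two-dimensional stationary phase on a well-chosen $2$-plane $V$, freezing the remaining $n-2$ variables. Both proofs are sound. Your method is more systematic and makes transparent exactly why $n\geq 3$ is needed (for $n=2$ the Hessian determinant $12\xi_1^2-4\xi_2^2$ degenerates on a curve, so rank $\geq 2$ fails), whereas the paper's method leans on the classical curvature of $\mathbb{S}^{n-1}$; the trade-off is that the uniformity of your stationary phase constants over both the partition of unity and the bounded parameter $x/t$ needs to be tracked (rank $\geq 2$ on a compact set does give a uniform lower bound on the product of the two largest singular values by continuity, and $D^2(\varphi+x\cdot\xi/t)=D^2\varphi$ is independent of $x/t$, so this works out), while the paper's $n=3$ case requires a finer analysis of the phase $\phi(r)=\sqrt{(tr^{a+1}+x_1r)^2+r^2\|x'\|^2}$ that your argument sidesteps.
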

\begin{proof}
We rewrite the integral in spherical coordinates to find
\begin{equation*}
\begin{split}
I(x,t) &= \int_0^\infty dr \underbrace{r^{n-1} \psi(r)}_{\rho(r)} \int_{\mathbb{S}^{n-1}} d\sigma(\omega) e^{it(r^{a+1} \omega_1 + x_1 r \omega_1 + \ldots + x_n r \omega)} \\
&= \int_0^\infty \rho(r) \hat{\sigma}(y_{x,t}(r)) dr,
\end{split}
\end{equation*}
where $y_{x,t}(r) = (t r^{a+1} +x_1 r, x_2 r, \ldots, x_n r)$.\\
Recall the decay 
\[
|\hat{\sigma}(y)| \lesssim (1+|y|)^{-\frac{n-1}{2}}
\]
This is already enough to prove the claim for $n \geq 4$.\\
Indeed, partition $supp(f) = E_1 \cup E_2$, where $E_1 = \{ r \in supp(\rho) | |t r^{a+1} + x_1 r| \leq 1 \}$ and $|E_1| \lesssim |t|^{-1}$. To see this note that $|t r^{a+1} + x_1 r| \leq 1$ implies $|tr^a + x_1| \leq 2$ and, by change of variables,
\begin{equation*}
\int_{1/2}^2 1_{ \{ |t r^a + x_1| \leq 2 \} }(r) \rho(r) dr = \int_{r^\prime \sim 1} 1_{\{ |t r^\prime + x_1| \leq 2 \} } \rho(r^\prime) dr^\prime \leq C |t|^{-1}
\end{equation*}
where $C$ depends on $\psi$, $n$ and $a$.\\
Similarly, $E_2 \subseteq \{ r \in supp(\rho) | |t r^a + x_1| \geq 2 \}$ and consequently,
\begin{equation*}
\begin{split}
\int_{E_2} \rho(r) | \hat{\sigma}(y_{x,t}(r))| dr &\leq \int_{|tr^a + x_1| \geq 2}  \rho(r) |tr^{a+1} + x_1 r|^{- \frac{n-1}{2}} dr \\
&\leq C \int_{|t r + x_1| \geq 2} |tr + x_1|^{-\frac{n-1}{2}} dr =C |t|^{-\frac{n-1}{2}} \int_{|r+x_1/t| \geq 2/|t|} |r+x_1/t|^{-\frac{n-1}{2}} dr \\
\end{split}
\end{equation*}
and after linear change of variables we estimate by $ C|t|^{-1}$.\\
We turn to $n=3$. Here, we make use of the asymptotic expansion
\begin{equation*}
\hat{\sigma}(y) = c \frac{e^{i \Vert y \Vert}}{\Vert y \Vert} + c \frac{e^{-i \Vert y \Vert}}{\Vert y \Vert} + \mathcal{E}_{x,t}(y),
\end{equation*}
where $|\mathcal{E}_{x,t}(y)| \lesssim \Vert y \Vert^{-2} \quad ( \Vert y \Vert \gg 1)$.\\
Set $\phi(r) = \sqrt{f(r)}$, where $f(r) = (tr^{a+1} + x_1 r)^2 + r^2 \Vert x^\prime \Vert^2$ and 
\begin{align*}
F^1 &= \{ r \in supp(\rho) | |tr^{a+1} + x_1 r| \leq 1 \} \cap \{ r \in supp(\rho) | |f^\prime(r)| \leq |t| \} \supseteq E^1, \\
F^2 &= \{ r \in supp(\rho) | |tr^{a+1} + x_1 r| \geq 1, \quad |f^\prime(r)| \geq |t| \} \subseteq E^2
\end{align*}
Below, we see that $|F^1| \lesssim |t|^{-1}$, which means that this contribution is controlled by $|\hat{\sigma}| \lesssim 1$.\\
Moreover, the contribution of $\mathcal{E}_{x,t}$ when integrating over $F^2$ is controlled by the higher dimensional argument due to $F^2 \subseteq E^2$ and sufficient decay to run the above argument.\\
A computation yields
\begin{equation*}
\begin{split}
f^\prime(r) &= 2t^2 (a+1) r (r^a - r_{-})(r^a-r_{+}),\\
r_{\pm} &= -\frac{(a+2) x_1}{2(a+1) t} \pm \sqrt{\left( \frac{a+2}{a+1} \right)^2 \left( \frac{x_1}{t} \right)^2 - \frac{x_1^2}{(a+1) t^2} - \frac{\Vert x^\prime \Vert^2}{(a+1) t^2}}
\end{split}
\end{equation*}
We can suppose that $\frac{x_1}{t} \sim 1$ and $\frac{\Vert x^\prime \Vert^2}{t^2} \ll 1$, since otherwise $|f(r)| \gtrsim |t|$, so that the roots are real and separated.\\
In fact, $|r_{\pm}| \sim 1$ and $|r_+ - r_{-}| \sim 1$. Moreover, whenever $f^\prime$ vanishes, then $|f^{\prime \prime}|$ is still bounded away from zero and thus, $|F^1| \lesssim |t|^{-1}$.\\
For the contribution of $e^{i \Vert y \Vert}/ \Vert y \Vert$ over $F^2$ note that we can write
\begin{equation*}
\int \frac{e^{i \phi(r)}}{\phi(r)} \rho(r) dr \sim \int \frac{d}{dr} [ e^{i \phi(r)} ] \frac{\rho(r)}{f^\prime(r)} dr
\end{equation*}
Next, the domain of integration is divided into a finite union of intervals, where $\rho/f^\prime$ is monotone. On each such interval integration by parts yields the desired result.
\end{proof}
\begin{remark}
The dispersive estimate follows also from \cite[Proposition~4.7.,~p.~260]{KochTataru2005}.
\end{remark}
From the dispersive estimate Strichartz estimates are derived by standard arguments.
\begin{proof}[Proof of Proposition \ref{prop:LinearStrichartzEstimates}]
For $n \geq 3$ the dispersive estimate and conservation of mass give by interpolation
\begin{equation*}
\Vert S_a(t) P_1 f \Vert_{L^p(\R^n)} \lesssim \Vert \tilde{P}_1 f \Vert_{L^{p^\prime}(\R^n)} \quad (2 \leq p \leq \infty)
\end{equation*}
and combination with the $TT^*$-argument (cf. \cite{Tomas1975,GinibreVelo1979,KeelTao1998}) proves Strichartz estimates
\begin{equation*}
\Vert S_a(t) P_1 f \Vert_{L_t^q(\R,L_x^p(\R^n))} \lesssim \Vert \tilde{P}_1 f \Vert_{L^2(\R^n)}
\end{equation*}
provided that $\frac{2}{q} + \frac{2}{p} = 1$, $p \neq \infty$. A scaling argument gives for $p,q$ like above
\begin{equation*}
\Vert S_a(t) P_N f \Vert_{L_t^q(\R,L_x^p(\R^n))} \lesssim N^s \Vert \tilde{P}_N f \Vert_{L^2(\R^n)}, \quad s = n \left( \frac{1}{2} - \frac{1}{p} \right) - \frac{a+1}{q}
\end{equation*}
and \eqref{eq:LinearStrichartzEstimates} follows from Littlewood-Paley theory.
\end{proof}
\section{Bilinear Strichartz estimates}
\label{section:BilinearStrichartzEstimates}
Purpose of this section is to prove bilinear Strichartz estimates as stated in Proposition \ref{prop:BilinearStrichartzEstimateHighLow}. Whereat the proof is straight-forward in case of separated frequencies, it requires more care to treat the $High \times High \times High$-interaction
\begin{equation}
\label{eq:HighHighHighInteraction}
\int \int_{\R^2 \times [0,T]} P_{N_1} S_a(t) u_0 P_{N_2} S_a(t) v_0 P_{N_3} S_a(t) w_0 dx dy dt, \quad N \sim N_1 \sim N_2 \sim N_3,
\end{equation}
where we shall see that it is still amenable to a bilinear Strichartz estimate.\\
Both cases follow from the following more general well-known transversality estimate:
\begin{proposition}
\label{prop:EuclideanTransversality}
Let $U_i$ be open sets in $\mathbb{R}^n$, $\varphi_i \in C^1(U_i,\mathbb{R})$ and let $u_i$ have Fourier support in balls of radius $r$ which are contained in $U_i$ for $i=1,2$. Moreover, suppose that $|\nabla \varphi(\xi_1) - \nabla \varphi(\xi_2)| \geq N > 0 $, whenever $\xi_1 \in U_1$, $\xi_2 \in U_2$.\\
Then, we find the following estimate to hold:
\begin{equation}
\label{eq:EuclideanTransversality}
\Vert e^{it \varphi(\nabla/i)} u_1 e^{it \varphi(\nabla/i)} u_2 \Vert_{L^2_{t,x}(\mathbb{R} \times \mathbb{R}^n)} \lesssim_n \frac{r^{\frac{n-1}{2}}}{N^{1/2}}  \Vert u_1 \Vert_{L^2(\mathbb{R}^n)} \Vert u_2 \Vert_{L^2(\mathbb{R}^n)}
\end{equation}
\end{proposition}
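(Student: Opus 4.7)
The plan is the standard Plancherel-plus-level-set computation: convert the $L^2_{t,x}$-norm of the product via the space-time Fourier transform, apply Cauchy--Schwarz to isolate a surface measure, and bound that surface measure using the transversality hypothesis.

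First, since $\bigl(e^{it\varphi(\nabla/i)} u_j\bigr)^{\wedge}(\tau,\xi) = \hat u_j(\xi)\,\delta(\tau - \varphi(\xi))$ as a distribution in frequency, the convolution theorem gives
\[
\bigl(e^{it\varphi(\nabla/i)} u_1 \cdot e^{it\varphi(\nabla/i)} u_2\bigr)^{\wedge}(\tau,\eta) \;=\; \int_{\R^n} \hat u_1(\xi)\,\hat u_2(\eta-\xi)\,\delta(\tau - \Phi_\eta(\xi))\,d\xi,
\]
where $\Phi_\eta(\xi) := \varphi(\xi) + \varphi(\eta-\xi)$, and the integration is supported on $\{\xi \in U_1,\ \eta-\xi \in U_2\}$, a set of diameter $\lesssim r$ by the Fourier-support hypothesis. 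By Plancherel the left-hand side of \eqref{eq:EuclideanTransversality} equals the $L^2_{\tau,\eta}$-norm of this expression.

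Next, I apply Cauchy--Schwarz in the $\xi$-integral to obtain the pointwise bound
\[
\bigl|(e^{it\varphi(\nabla/i)} u_1 \cdot e^{it\varphi(\nabla/i)} u_2)^{\wedge}(\tau,\eta)\bigr|^2 \;\le\; M(\tau,\eta)\int |\hat u_1(\xi)|^2\,|\hat u_2(\eta-\xi)|^2\,\delta(\tau - \Phi_\eta(\xi))\,d\xi,
\]
with $M(\tau,\eta) := \int 1_{U_1}(\xi)\,1_{U_2}(\eta-\xi)\,\delta(\tau - \Phi_\eta(\xi))\,d\xi$. The chain rule gives $\nabla_\xi \Phi_\eta(\xi) = \nabla\varphi(\xi) - \nabla\varphi(\eta-\xi)$, which has modulus at least $N$ by hypothesis. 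The co-area formula then yields $M(\tau,\eta) \lesssim_n r^{n-1}/N$ uniformly in $(\tau,\eta)$: the level surface of $\Phi_\eta$ intersected with a ball of radius $r$ has $(n-1)$-dimensional Hausdorff measure $\lesssim_n r^{n-1}$, while the reciprocal-gradient weight is at most $1/N$.

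To conclude, I integrate the Cauchy--Schwarz estimate in $(\tau,\eta)$, interchange the order of integration via Fubini, eliminate the delta by integrating in $\tau$, and exploit translation invariance in $\eta$ to collapse the remaining double integral to $\Vert u_1 \Vert_{L^2}^2 \Vert u_2 \Vert_{L^2}^2$. Taking square roots produces \eqref{eq:EuclideanTransversality}. The only delicate point is making the $\delta$-function manipulations rigorous when $\varphi$ is merely $C^1$; this is handled by replacing $\delta$ with a mollifier $\delta_\varepsilon$, applying the co-area formula (which only requires Lipschitz regularity of $\Phi_\eta$) to the resulting smooth level sets, and sending $\varepsilon \to 0$. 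The main conceptual step is really just the gradient lower bound, which is exactly the transversality assumption.
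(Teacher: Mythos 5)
The paper offers no proof of this proposition (it is stated as well known and attributed to \cite{Bourgain1998RefinementsStrichartzInequality}), so I evaluate your argument on its own terms. The skeleton --- Plancherel, then Cauchy--Schwarz to isolate the level-set mass $M(\tau,\eta)$, then Fubini --- is the standard one and is correct. The problem is in how you bound $M$. You factor the co-area integral $M(\tau,\eta)=\int_{\{\Phi_\eta=\tau\}\cap B}|\nabla\Phi_\eta|^{-1}\,dH^{n-1}$ into a product of the surface measure and the supremum of the weight, and then bound these by $r^{n-1}$ and $1/N$ respectively. The second bound is fine, but the first is unjustified and can fail: a lower bound $|\nabla\Phi_\eta|\geq N$ does not control the $(n-1)$-dimensional measure of a level set inside a ball. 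For instance, in $n=2$ the function $\Phi(\xi)=\xi_2-\sin(K\xi_1)$ has $|\nabla\Phi|\geq 1$ everywhere, yet its zero set inside $B_2$ has length $\gtrsim K$. In that example $M$ is still $\lesssim r/N$, because where the surface is long the weight $1/|\nabla\Phi|$ is proportionally small, but your factorized bound discards precisely that cancellation.

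The standard repair avoids this splitting and integrates the $\delta$ directly. Since $|\nabla\Phi_\eta|\geq N$, one may partition the $\xi$-domain into $n$ pieces $B_j$ on each of which $|\partial_j\Phi_\eta|\geq N/\sqrt n$. On $B_j$, write $\xi=(\xi_j,\xi')$: for each fixed $\xi'$ the map $\xi_j\mapsto\Phi_\eta(\xi_j,\xi')$ is strictly monotone on every connected component of the slice of $B_j$, so the $\xi_j$-integral of $\delta(\tau-\Phi_\eta)$ is $\lesssim\sqrt n/N$ per component; the remaining $\xi'$-integral runs over the projection of a ball of radius $\lesssim r$, giving $\lesssim_n r^{n-1}$. (One also needs the number of monotone components meeting the level set to be $O(1)$; this is not forced by $C^1$ regularity alone, but it holds for the algebraic phases $\varphi_a(\xi)=\xi_1|\xi|^a$ that the paper actually uses.) With this substitution your Cauchy--Schwarz and Fubini steps close the argument.
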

In order to apply Proposition \ref{prop:EuclideanTransversality} we have to analyze the group velocity $v_a(\xi)=-\nabla \varphi_a(\xi)$, where $\varphi_a(\xi) = \xi_1 |\xi|^a$.\\
We have
\begin{equation}
\label{eq:groupVelocities}
\partial_1 \varphi_a(\xi) = |\xi|^a + a \xi_1^2 |\xi|^{a-2}, \quad \partial_2 \varphi_a(\xi) = a \xi_1 \xi_2 |\xi|^{a-2}
\end{equation}
\begin{proof}[Proof of Proposition \ref{prop:BilinearStrichartzEstimateHighLow}]
First, divide $B_{2N} \backslash B_{N/2}$ into finitely overlapping balls of radius $K$, which we denote by the family $(R_L)$. Then, from almost orthogonality
\begin{equation}
\label{eq:bilinearDecomposition}
\Vert P_N S(t) u_0 P_K S(t) v_0 \Vert^2_{L^2_{t,x}} \lesssim \sum_L \Vert R_L S(t) u_0 P_K S(t) v_0 \Vert^2_{L^2_{t,x}}
\end{equation}
To estimate the terms from the sum we use Proposition \ref{prop:EuclideanTransversality}. From \eqref{eq:groupVelocities} we find $|\partial_1 \varphi(\xi)| \geq (N/2)^a$ for $|\xi| \geq N/2$ and $|\partial_2 \varphi_a(\xi)| \leq (1+a)(2K)^a$ and \eqref{eq:EuclideanTransversality} implies
\begin{equation*}
\eqref{eq:bilinearDecomposition} \lesssim \sum_L \left( \frac{K}{N} \right) \Vert R_L u_0 \Vert^2_{L^2} \Vert P_K v_0 \Vert_{L^2}^2 = \left( \frac{K}{N^a} \right)^{1/2} \Vert P_N u_0 \Vert^2_{L^2} \Vert P_K v_0 \Vert^2_{L^2},
\end{equation*}
which completes the proof.
\end{proof}
Next, we turn to the case of three comparable frequencies in the plane as depicted in \eqref{eq:HighHighHighInteraction}. We prove the following proposition:
\begin{proposition}
\label{prop:HighHighHighInteraction}
Let $N \gg 1$ and suppose that $\xi_i \in \R^2, N/8 \leq |\xi_i| \leq 8N$ for $i=1,2,3$ and $\xi_1+\xi_2+\xi_3=0$. Then, there are $i,j \in \{1,2,3\}$ with
\begin{equation*}
|v_a(\xi_i) - v_a(\xi_j)| \gtrsim N^a
\end{equation*} 
\end{proposition}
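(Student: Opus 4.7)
The plan is to reduce the proposition, via scaling, to a qualitative statement on a compact set, which is then established by analyzing the fibers of $v_a$. By the homogeneity $v_a(\lambda \xi) = \lambda^a v_a(\xi)$ for $\lambda > 0$ and evenness $v_a(-\xi) = v_a(\xi)$, both immediate from \eqref{eq:groupVelocities}, the rescaling $\xi_i \mapsto \xi_i/N$ reduces the claim to showing that
$$F(\xi_1, \xi_2, \xi_3) := \max_{i \neq j} |v_a(\xi_i) - v_a(\xi_j)|$$
is strictly positive on the compact set
$$K := \bigl\{(\xi_1, \xi_2, \xi_3) \in (\R^2)^3 : |\xi_i| \in [1/8, 8], \; \xi_1 + \xi_2 + \xi_3 = 0 \bigr\}.$$
Joint continuity in $(\xi_1, \xi_2, \xi_3, a) \in K \times [1, 2]$ then furnishes a uniform positive lower bound, which rescales to the desired $\gtrsim N^a$.

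Identifying $\R^2 \cong \C$ and writing $\xi = r e^{i\theta}$, the formulas \eqref{eq:groupVelocities} combine to
$$v_a(r e^{i\theta}) = -r^a\bigl[ (1 + a/2) + (a/2) e^{2i\theta}\bigr] =: r^a \tilde V_a(\theta).$$
The map $\tilde V_a$ is $\pi$-periodic and, viewed as a map $\R/\pi\Z \to \C$, bijects onto the circle $\mathcal{C}$ of radius $a/2$ centered at $-(1 + a/2)$; since $1 + a/2 > a/2$, the circle $\mathcal{C}$ avoids the origin.

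Suppose now $F(\xi_1, \xi_2, \xi_3) = 0$, so $v_a(\xi_i) = w$ for all $i$. Since $|v_a(\xi)| \geq |\xi|^a > 0$, we have $w \neq 0$. Writing $\xi_i = r_i e^{i\theta_i}$, the three values $\tilde V_a(\theta_i) = w/r_i^a$ are positive scalar multiples of $w$ and hence lie on a single ray through the origin, which intersects $\mathcal{C}$ in at most two points. By pigeonhole, two of them coincide, say $\tilde V_a(\theta_1) = \tilde V_a(\theta_2)$. Injectivity of $\tilde V_a$ on $\R/\pi\Z$ gives $\theta_1 \equiv \theta_2 \pmod{\pi}$, and comparing magnitudes in $r_i^a \tilde V_a(\theta_i) = w$ yields $r_1 = r_2$. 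Hence $\xi_1 = \pm \xi_2$.

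If $\xi_1 = -\xi_2$, the constraint $\xi_1 + \xi_2 + \xi_3 = 0$ forces $\xi_3 = 0$, contradicting $|\xi_3| \geq 1/8$. If instead $\xi_1 = \xi_2$, then $\xi_3 = -2\xi_1$, and by evenness together with homogeneity $w = v_a(\xi_3) = v_a(2\xi_1) = 2^a v_a(\xi_1) = 2^a w$, whence $w = 0$, again a contradiction. Thus $F > 0$ on $K$, which completes the proof after the scaling and continuity reduction. The main obstacle is the "two equal, one doubled" scenario in the final step, which is resolved cleanly by the homogeneity identity $v_a(2\xi) = 2^a v_a(\xi)$ forcing $w = 2^a w$.
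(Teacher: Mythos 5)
Your proof is correct, and it takes a genuinely different route from the paper. The paper's argument is a hands-on case analysis: it classifies each $\xi_i$ according to which component is Low/Medium/High in magnitude and according to signs, uses a Taylor expansion of $|\xi|$ around the dominant component to compare $\partial_1\varphi_a$ or $\partial_2\varphi_a$ across the three frequencies, and in the degenerate cases reduces to the one-dimensional fractional Benjamin--Ono transversality. Your argument instead establishes a clean structural fact about $v_a$: after passing to polar coordinates $\xi = re^{i\theta}$, the angular part $\tilde V_a(\theta) = -\bigl[(1+a/2)+(a/2)e^{2i\theta}\bigr]$ parametrizes a circle that avoids the origin, from which the fiber $v_a^{-1}(w)$ for $w \neq 0$ consists of at most two antipodal pairs of rays; a compactness argument then converts the resulting ``no triple collision compatible with $\xi_1+\xi_2+\xi_3 = 0$'' into a quantitative lower bound. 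What the paper's approach buys is an explicit constant (in terms of the bucket parameter $c$) and a structure that mirrors the companion one-dimensional analysis; what yours buys is brevity, conceptual clarity, and --- thanks to the joint continuity in $a$ --- uniformity of the implicit constant over $a \in [1,2]$, which the paper's proof does not explicitly track. The only point worth making explicit, which you handle correctly, is that the final degenerate case $\xi_1 = \xi_2$, $\xi_3 = -2\xi_1$ is ruled out by $2^a \neq 1$; this is exactly where $a > 0$ enters.
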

\begin{proof}
A key observation is that for $|\xi_2| \leq c |\xi|$ or $|\xi_1| \leq c |\xi|$, where $c$ is a small constant, a Taylor expansion of $|\xi|$ around the large component reveals
\begin{equation*}
\begin{split}
\partial_1 \varphi_a(\xi) &= (1+a)|\xi_1|^a + O(\xi_2^2 |\xi_1|^{a-2}) \\
&= (1+a) |\xi_1|^a + O(c^2 |\xi_1|^a) \quad \quad (|\xi_2| \leq c|\xi|) \\
\partial_1 \varphi_a(\xi) &= |\xi_2|^a + O(c^2 |\xi_2|^a) \quad \quad (|\xi_1| \leq c |\xi|)
\end{split}
\end{equation*}
This means that as soon as one component dominates the other one, the propagation into $x_1$-direction is essentially governed by the group velocity associated to a (fractional) one-dimensional Benjamin-Ono equation, which has been considered in \cite{rsc2018BilinearStrichartzEstimates}.\\
To deal with different sizes of the components for $\xi \in \R^2$ we introduce the notation $\xi \in ( A,B )$, where $A,B \in \{Low,Medium,High\}$ and $\xi \in (X,Y)$, where $X,Y \in \{+,-\}$ to indicate $\xi_1 \geq 0, \xi_2 \leq 0$. E.g. $\xi \in (High(+),Medium(-))$ means $|\xi_1| \geq \frac{c |\xi|}{2}$, $|\xi_2| \in [ c^3 |\xi|, \frac{c|\xi|}{2}]$, $\xi_1 \geq 0, \xi_2 \leq 0$ or $\xi \in (Low, High(-))$ means $|\xi_1| \leq c^3 |\xi|$, $|\xi_2| \geq \frac{c |\xi|}{2}$, $\xi_2 \leq 0$.\\
Here, $c$ is a small dimensional constant chosen so that the error terms in the above Taylor expansion can be neglected in the following considerations.\\
We sort the frequencies according to the above system.\\
Suppose that the components of any frequency are all at least of medium size, so that no component of the three frequencies is low.\\
Then, by \eqref{eq:groupVelocities} $|\partial_2 \varphi_a(\xi)| \geq c^5 |\xi|^a$ for $i=1,2,3$. Next, observe that for $\xi_i \in (+,+)$ or $ \xi_i \in (-,-)$ we have $\partial_2 \varphi(\xi_i) \geq c^5 |\xi|^a$ and in case of mixed signs $\xi_i \in (+,-)$ or $\xi_i \in (-,+)$ we have $\partial_2 \varphi_a(\xi_i) \leq - c^{5} |\xi|^a$, and the estimate $|\partial_2 \varphi_a(\xi_i) - \partial_2 \varphi_a(\xi_j)| \geq c^5 |\xi|^a$ is immediate.\\
Next, we turn to the case where all components have size greater than $ c^3 |\xi|$ and all frequencies are of equal signs (the case of mixed signs will be analogous).\\
Say $\xi_1 \in (High(+),Medium(+))$, $\xi_2 \in (High(+),High(+))$, $\xi_3 \in (High(-),High(-))$.\\
Write $\xi_{21} = \alpha \xi_{11}$, $\xi_{22} = \beta \xi_{12}$, where $\alpha, \beta \in [c^5,c^{-5}]$ and it follows
\begin{equation*}
\begin{split}
&|\partial_2 \varphi_a(\xi_1) - \partial_2 \varphi_a(\xi_3)| \\
&= \left| \frac{a \xi_{11} \xi_{12}}{(\xi_{11}^2 + \xi_{12}^2)^{\frac{2-a}{2}}} - \frac{a (1+ \alpha) \xi_{11} (1+\beta) \xi_{12}}{((1+\alpha)^2 \xi_{11}^2 + (1+\beta)^2 \xi_{12}^2)^{\frac{2-a}{2}}} \right| \\
&\geq c^5 a \frac{|\xi_{11} \xi_{12}|}{(\xi_{11}^2 + \xi_{12}^2)^{\frac{2-a}{2}}} \gtrsim |\xi|^a
\end{split}
\end{equation*}
Next, we suppose that there is one low component involved, say $\xi_1 \in (Low, High)$. Suppose that there is a frequency $\xi_j \in (High,High)$. Then, we find $|\partial_2 \varphi_a(\xi_{1})| = O(c^3 |\xi|^a)$ and $|\partial_2 \varphi_a(\xi_j)| \gtrsim c^2 |\xi|^a$, hence $|\partial_2 \varphi_a(\xi_1) - \partial_2 \varphi_a(\xi_j)| \gtrsim c^2 |\xi|^a$ which yields the desired transversality.\\
With $|\xi_{12}| \sim |\xi|$ there is another frequency, say $\xi_2$ with $|\xi_{22}| \sim |\xi|$ and by the above consideration suppose next that $\xi_2 \in (Low,High)$ or $\xi_2 \in (Medium, High)$.\\
Either way, $|\xi_{31}| \leq |\xi_{11}|+|\xi_{12}| \leq c |\xi_{11}|$ and we can expand $\partial_1 \varphi(\xi_i)$ in the second component of the frequencies to find that the analysis reduces to the one-dimensional fractional Benjamin-Ono equation and hence, there are $\xi_i$ and $\xi_j$ with
\begin{equation*}
|\partial_1 \varphi_a(\xi_i) - \partial_1 \varphi_a(\xi_j)| \gtrsim |\xi|^a
\end{equation*}
The same argument applies in case $\xi_1 \in (High,Low)$. In case there is $\xi_j \in (High,High)$ the difference satisfies $|\partial_2 \varphi_a(\xi_1) - \partial_2 \varphi_a(\xi_j)| \gtrsim c^2 |\xi|^a$ and in case there is no $\xi_j \in (High,High)$ we can expand in the first frequency component to reduce the analysis to the one-dimensional fractional Benjamin-Ono equation according to which there are $\xi_i,\xi_j$ such that $|\partial_1 \varphi_a(\xi_i) - \partial_1 \varphi_a(\xi_j)| \gtrsim |\xi|^a$.\\
The proof is complete.
\end{proof}

\section{Function spaces}
In this section we discuss the shorttime function spaces which are used to prove the local well-posedness results. The iteration scheme is the same for solutions in Euclidean space and for fully periodic solutions. However, in Euclidean space we do not have to use Fourier transform in time which allows for a simplification of the construction compared to the periodic case.\\
Shorttime $L^2$-valued $U^p$-/$V^p$-spaces will be utilized like in \cite{ChristHolmerTataru2012,rsc2018BilinearStrichartzEstimates}. Here, we will be very brief and instead refer to these works for a presentation of the basic function space properties. The notation will be the same like in the aforementioned works. For a careful exposition see \cite{HadacHerrKoch2009,HadacHerrKoch2009Erratum}. The $V^p$-spaces are the usual function spaces containing functions of bounded $p$-variation and the $U^p$-spaces are atomic spaces which are the respective predual spaces. Roughly, $U^2$ serves as a substitute for $H^{1/2}$, which does not embed into $L^\infty$, but any $U^p$-function is bounded.\\
The $U^p$-/$V^p$-spaces are adapted to free solutions in the usual way:
\begin{align*}
\Vert u \Vert_{U^p_a(I;L^2)} &= \Vert S_a(-t) u(t) \Vert_{U^p(I;L^2)} \\
\Vert v \Vert_{V^p_a(I;L^2)} &= \Vert S_a(-t) v(t) \Vert_{V^p(I;L^2)} \\
\Vert w \Vert_{DU^2_{a}(I;L^2)} &= \Vert S_a(-t) w(t) \Vert_{DU^2_{a}(I;L^2)}
\end{align*}
Motivated by \eqref{eq:ShorttimeBilinearAmelioration} we choose $T(N) = N^{a-2}$ as frequency dependent time localization.\\
Below we shall only deal with the case $1 \leq a < 2$, since for $a=2$ the localization to small frequency dependent time intervals is no longer necessary and the analysis comes down to the Fourier restriction analysis without localization in time from \cite{GruenrockHerr2014}.\\
Letting $\chi_I$ denote a sharp cut-off to a time interval $I$ the shorttime $U^2$-space into which the solution to \eqref{eq:fractionalBenjaminOnoEquation} will be placed is given by
\begin{equation*}
\Vert u \Vert^2_{F^s_a(T)} = \sum_{N \geq 1} N^{2s} \sup_{\substack{|I|=N^{a-2},\\ I \subseteq [0,T]}} \Vert P_N \chi_I u \Vert^2_{U^2_a(I;L^2)}
\end{equation*}
The corresponding space for the nonlinearity is defined by
\begin{equation*}
\Vert f \Vert^2_{N^s_a(T)} = \sum_{N \geq 1} N^{2s} \sup_{\substack{|I|=N^{a-2},\\ I \subseteq [0,T]}} \Vert P_N \chi_I u \Vert^2_{DU^2_{a}(I;L^2)}
\end{equation*}
and the energy space is
\begin{equation*}
\Vert u \Vert^2_{E^s(T)} = \sum_{N \geq 1} N^{2s} \sup_{t \in [0,T]} \Vert P_N u(t) \Vert_{L^2}
\end{equation*}
The shorttime norm of a smooth solution to \eqref{eq:fractionalBenjaminOnoEquation} is propagated as follows:
\begin{equation*}
\Vert u \Vert_{F^s_a(T)} \lesssim \Vert u \Vert_{E^s(T)} + \Vert \partial_{x_1} (u^2) \Vert_{N^s_a(T)}
\end{equation*}
(cf. \cite[Lemma~3.8,~p.~12]{rsc2018BilinearStrichartzEstimates}).\\
Moreover, since $U^p_a$-atoms are piecewise free solutions estimates for free solutions extend to $U^p_a$-functions.
\begin{proposition}
Let $n \geq 3$, $1 \leq a \leq 2$, $N \in 2^{\mathbb{N}_0}$ and $I$ be an interval. Suppose that $2/q+2/p=1$, $2 \leq q,p < \infty$. Then, we find the following estimate to hold:
\begin{equation}
\Vert P_N u(t) \Vert_{L_t^q(I;L_x^p(\R^n))} \lesssim N^s \Vert P_N u_0 \Vert_{U_a^q(I;L^2)},
\end{equation}
where $s=n \left( \frac{1}{2} - \frac{1}{p} \right) - \frac{a+1}{q}$.
\end{proposition}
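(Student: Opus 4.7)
The estimate as written contains a minor typo (the right-hand side should be $\Vert P_N u \Vert_{U^q_a(I;L^2)}$ rather than $\Vert P_N u_0 \Vert$); with this correction the statement is precisely the standard transference principle from $U^p$-theory (see \cite{HadacHerrKoch2009,HadacHerrKoch2009Erratum}) applied to the frequency-localized propagator $P_N S_a$. The plan is therefore to reduce to a single $U^q_a$-atom and then invoke the linear Strichartz estimate of Proposition~\ref{prop:LinearStrichartzEstimates} on each constituent free solution.

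By the atomic definition of the $U^q_a$-norm, and by linearity of $P_N$, it suffices to prove the bound when $v := P_N u$ is a $U^q_a$-atom, that is, a function of the form
\begin{equation*}
v(t) = \sum_{k=1}^K \chi_{[t_{k-1},t_k)}(t)\, S_a(t)\phi_k, \qquad \sum_{k=1}^K \Vert \phi_k \Vert_{L^2}^q \leq 1,
\end{equation*}
associated with a partition $t_0 < t_1 < \ldots < t_K$ of $I$, where each $\phi_k$ may be assumed frequency-localized at scale $N$. Because the time-intervals $[t_{k-1},t_k)$ are disjoint, the $L^q_t L^p_x$-norm decouples as
\begin{equation*}
\Vert v \Vert_{L^q_t(I;L^p_x)}^{q} = \sum_{k=1}^K \int_{t_{k-1}}^{t_k} \Vert S_a(t)\phi_k \Vert_{L^p_x}^{q}\, dt \;\leq\; \sum_{k=1}^K \Vert S_a(t)\phi_k \Vert_{L^q_t(\R;L^p_x)}^{q}.
\end{equation*}
Proposition~\ref{prop:LinearStrichartzEstimates} applied to each admissible pair $(q,p)$ with sharp regularity $s=n(1/2-1/p)-(a+1)/q$ gives $\Vert S_a(t)\phi_k \Vert_{L^q_tL^p_x}\lesssim N^s \Vert \phi_k \Vert_{L^2}$, and summing over $k$ together with the atomic normalization yields $\Vert v \Vert_{L^q_t L^p_x}^q \lesssim N^{sq}$, which is the claim for an atom. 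The extension to general $U^q_a$-functions is immediate from the atomic decomposition.

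The argument has no real obstacle; the only points to check are that $q<\infty$, which is needed to pass through the $q$-th power in the telescoping identity above, and that $p<\infty$, which is required for Proposition~\ref{prop:LinearStrichartzEstimates} itself (the endpoint $(q,p)=(2,\infty)$ is excluded there). Both hypotheses are assumed in the statement, so no further work is needed.
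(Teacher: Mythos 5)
Your proof is correct and follows exactly the route the paper intends: the paper does not spell out a proof but simply remarks that ``$U^p_a$-atoms are piecewise free solutions,'' so estimates for free solutions transfer, which is precisely the atomic-decomposition argument you write out. You also correctly spot the typo ($u_0$ in place of $u$ on the right-hand side), and the rest—decoupling over the disjoint atom intervals, applying Proposition~\ref{prop:LinearStrichartzEstimates} to each frequency-localized piece, and summing—is the standard transference lemma.
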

This also remains valid for bilinear estimates.
\begin{proposition}
Let $1 \leq a < 2$, $N_1 \gg N_2 $ and $I$ be an interval with $|I|=N_1^{a-2}$. Then, we find the following estimates to hold:
\begin{align}
\label{eq:bilinearU2Estimate}
\Vert P_{N_1} u_1 P_{N_2} u_2 \Vert_{L^2_{t,x} (I \times \R^n)} &\lesssim \left( \frac{N_2^{n-1}}{N_1^{a}} \right)^{1/2} \Vert P_{N_1} u_1 \Vert_{U^2_a(I)} \Vert P_{N_2} u_2 \Vert_{U^2_a(I)} \\
\label{eq:bilinearV2Estimate}
\Vert P_{N_1} u_1 P_{N_2} u_2 \Vert_{L^2_{t,x} (I \times \R^n)} &\lesssim \left( \frac{N_2^{n-1}}{N_1^a} \right)^{1/2} \log \langle N_1 \rangle^2 \Vert P_{N_1} u_1 \Vert_{V^2_a(I)} \Vert P_{N_2} u_2 \Vert_{V_a^2(I)}
\end{align}
\end{proposition}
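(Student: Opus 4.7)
The plan is to prove \eqref{eq:bilinearU2Estimate} by the atomic transfer principle from Proposition \ref{prop:BilinearStrichartzEstimateHighLow}, and then upgrade to \eqref{eq:bilinearV2Estimate} via the Hadac--Herr--Koch bilinear interpolation between $U^2_a$ and $U^\infty_a$.

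\emph{Step 1: the $U^2_a$ bound.} By the atomic structure of $U^2_a$, it suffices to verify \eqref{eq:bilinearU2Estimate} when each $P_{N_i} u_i$ is a $U^2_a$-atom, i.e.\ has the form $v_i(t) = \sum_k \chi_{I^{(i)}_k}(t) S_a(t)\phi^{(i)}_k$ with $\{I^{(i)}_k\}_k$ a partition of $I$ and $\sum_k \Vert \phi^{(i)}_k \Vert_{L^2}^2 \leq 1$. Expanding the product and using that the sets $\{I^{(1)}_k \cap I^{(2)}_\ell\}_{k,\ell}$ are pairwise disjoint,
\[
\Vert P_{N_1} v_1 \cdot P_{N_2} v_2 \Vert_{L^2_{t,x}}^2 = \sum_{k,\ell} \Vert \chi_{I^{(1)}_k \cap I^{(2)}_\ell}(t) \, P_{N_1} S_a(t)\phi^{(1)}_k \, P_{N_2} S_a(t)\phi^{(2)}_\ell \Vert_{L^2_{t,x}}^2.
\]
Proposition \ref{prop:BilinearStrichartzEstimateHighLow} applied on each piece (it is translation invariant in time, hence agnostic to the interval) controls each summand by $(N_2^{n-1}/N_1^a) \Vert \phi^{(1)}_k \Vert_{L^2}^2 \Vert \phi^{(2)}_\ell \Vert_{L^2}^2$, and summing in $(k,\ell)$ yields the atomic estimate; passage to general $U^2_a$-functions follows from the atomic decomposition.

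\emph{Step 2: the $V^2_a$ bound.} Set $T(u_1,u_2) = P_{N_1} u_1 \cdot P_{N_2} u_2$. Step 1 gives
\[
\Vert T(u_1,u_2) \Vert_{L^2_{t,x}(I)} \leq C_0 \Vert u_1 \Vert_{U^2_a(I)} \Vert u_2 \Vert_{U^2_a(I)}, \qquad C_0 = (N_2^{n-1}/N_1^a)^{1/2},
\]
while a trivial bound through $U^\infty_a \hookrightarrow L^\infty_t L^2_x$, Bernstein, and $L^\infty_x \cdot L^2_x \hookrightarrow L^2_x$ reads
\[
\Vert T(u_1,u_2) \Vert_{L^2_{t,x}(I)} \leq C_1 \Vert u_1 \Vert_{U^\infty_a(I)} \Vert u_2 \Vert_{U^\infty_a(I)}, \qquad C_1 \lesssim |I|^{1/2} N_2^{n/2},
\]
which is at most polynomial in $N_1$. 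Applying the bilinear interpolation lemma of Hadac--Herr--Koch separately in each slot, combined with the embedding $V^2_a \hookrightarrow U^p_a$ for $p>2$, upgrades the $U^2_a$-bound to a $V^2_a$-estimate with loss $(\log(e + C_1/C_0))^2 \lesssim \log\langle N_1 \rangle^2$, which is exactly \eqref{eq:bilinearV2Estimate}.

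\emph{Main obstacle.} The only nontrivial point is bookkeeping the constants: one must verify that the ratio $C_1/C_0$ is at most polynomial in $N_1$ so that the Hadac--Herr--Koch interpolation produces precisely two logarithmic factors (one per input slot) rather than a genuine polynomial loss. Once the explicit $C_0$ and $C_1$ are tracked through the interpolation identity, this reduces to checking that the exponents of $N_1,N_2$ in $C_1/C_0$ are bounded by absolute constants, which is immediate from $|I|=N_1^{a-2}$ and $1\leq a<2$.
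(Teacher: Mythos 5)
Your proof is correct and follows essentially the same route as the paper, which deduces \eqref{eq:bilinearU2Estimate} from the atomic decomposition/transfer principle of Hadac--Herr--Koch (their Proposition~2.19) and \eqref{eq:bilinearV2Estimate} from their logarithmic interpolation lemma (Proposition~2.20); you have merely unwound those two citations. The bookkeeping in Step~2 checks out: with $|I|=N_1^{a-2}$ one finds $C_1/C_0 \sim N_1^{a-1}N_2^{1/2}\lesssim N_1^{a-1/2}$, which is polynomial in $N_1$ and $\geq 1$ for $a\geq 1$, so the two slots each contribute a factor $\log\langle N_1\rangle$, as required.
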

\begin{proof}
\eqref{eq:bilinearU2Estimate} is immediate from atomic decompositions (cf. \cite[Proposition~2.19,~p.~929]{HadacHerrKoch2009}) and \eqref{eq:bilinearV2Estimate} follows from an interpolation argument (cf. \cite[Proposition~2.20,~p.~930]{HadacHerrKoch2009}).
\end{proof}
\section{Nonlinear estimates}
\label{section:NonlinearEstimates}
This section is devoted to the propagation of the nonlinearity in the shorttime function spaces.
\begin{proposition}
Let $1 \leq a \leq 2$, $n \geq 2$, $s>(n-1)/2$. Then, we find the following estimates to hold:
\begin{align}
\label{eq:NonlinearEstimateI}
\Vert \partial_x (u v) \Vert_{N_a^{s}(T)} &\lesssim \Vert u \Vert_{F_a^s(T)} \Vert v \Vert_{F_a^s(T)} \\
\label{eq:NonlinearEstimateII}
\Vert \partial_x (u v) \Vert_{N_a^{0}(T)} &\lesssim \Vert u \Vert_{F_a^{0}(T)} \Vert v \Vert_{F_a^s(T)}
\end{align}
\end{proposition}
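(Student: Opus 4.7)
The plan is a Littlewood--Paley decomposition $u=\sum_{N_1}P_{N_1}u$, $v=\sum_{N_2}P_{N_2}v$ reducing matters to the four standard interactions $High\times Low\to High$, $Low\times High\to High$, $High\times High\to High$, and $High\times High\to Low$. Fixing the output frequency $N$ and an interval $I\subseteq[0,T]$ of length $N^{a-2}$, the starting point is the crude bound
\begin{equation*}
\|P_N\partial_{x_1}(P_{N_1}u\cdot P_{N_2}v)\|_{DU^2_a(I)}\lesssim N\,|I|^{1/2}\,\|P_{N_1}u\cdot P_{N_2}v\|_{L^2_{t,x}(I\times\R^n)},
\end{equation*}
obtained from the embedding $DU^2_a\hookleftarrow L^1_tL^2_x$ and Cauchy--Schwarz in time; the appropriate bilinear Strichartz estimate from Section \ref{section:BilinearStrichartzEstimates} is then inserted on the right.

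In the $High\times Low\to High$ regime ($N_1\sim N\gg N_2$), \eqref{eq:bilinearU2Estimate} yields a gain $(N_2^{n-1}/N^a)^{1/2}$, so the prefactor collapses to $N\cdot N^{(a-2)/2}\cdot N^{-a/2}\,N_2^{(n-1)/2}=N_2^{(n-1)/2}$. Summing in $N_2\leq N$ weighted against $N_2^{-s}$ is square-summable precisely when $s>(n-1)/2$; since $N_2^{a-2}\geq N^{a-2}$, the natural interval for $P_{N_2}v$ contains $I$ and its $U^2_a(I)$-norm is controlled by the $F^s_a$-norm of $v$. The $Low\times High\to High$ case is symmetric, and \eqref{eq:NonlinearEstimateII} amounts to shifting the weight from the output to whichever input factor carries the high frequency, without altering the gain.

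The $High\times High\to High$ regime ($N_1\sim N_2\sim N$) splits by dimension. For $n=2$ I invoke the three-wave transversality of Proposition \ref{prop:HighHighHighInteraction}, which supplies a bilinear $L^2$ bound of the same shape as \eqref{eq:bilinearU2Estimate} but with $N$ in place of $N_2$, producing the factor $N^{(n-1)/2}=N^{1/2}$. For $n\geq 3$ transversality fails between parallel high frequencies, and I replace it with Hölder together with the $L^4_{t,x}$ Strichartz estimate of Proposition \ref{prop:LinearStrichartzEstimates} at $p=q=4$, lifted to $U^4_a$-atoms, which gives
\begin{equation*}
\|P_{N_1}u\cdot P_{N_2}v\|_{L^2_{t,x}(I)}\lesssim N^{(n-a-1)/2}\|P_{N_1}u\|_{U^4_a(I)}\|P_{N_2}v\|_{U^4_a(I)};
\end{equation*}
combined with the prefactor $N\cdot|I|^{1/2}=N^{a/2}$ this again yields $N^{(n-1)/2}$.

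The $High\times High\to Low$ case ($N_1\sim N_2\gg N$) is the most delicate. I argue by duality: represent $\|\cdot\|_{DU^2_a(I)}$ as a pairing with a test function $w$ of unit $V^2_a(I)$-norm localized at frequency $N$, integrate $\partial_{x_1}$ onto $w$, and apply the $V^2$-bilinear estimate \eqref{eq:bilinearV2Estimate} to the pair $(P_{N_1}u,P_Nw)$, bounding $P_{N_2}v$ in $L^2_{t,x}(I)$ by $|I|^{1/2}\|P_{N_2}v\|_{L^\infty_tL^2_x}$. The logarithmic loss is absorbed after the dyadic sum. The principal obstacle throughout is the bookkeeping between the three time scales $N^{a-2}$, $N_1^{a-2}$, $N_2^{a-2}$: whenever an input frequency $N_i$ exceeds the output frequency $N$, its natural interval is shorter than $I$, forcing a subdivision of $I$ into $\sim(N_i/N)^{2-a}$ pieces of the natural $N_i$-scale before \eqref{eq:bilinearU2Estimate} or \eqref{eq:bilinearV2Estimate} applies, and the threshold $s>(n-1)/2$ is precisely what allows this loss to be reabsorbed against the bilinear gain.
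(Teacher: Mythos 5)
Your proposal follows the same route as the paper: a dyadic reduction to the three nontrivial interaction types, the crude $N|I|^{1/2}$ bound from $L^1_tL^2_x\hookrightarrow DU^2_a$, the bilinear Strichartz estimate for $High\times Low\to High$, Proposition~\ref{prop:HighHighHighInteraction} plus duality for $High\times High\to High$ in $n=2$, two $L^4_{t,x}$ Strichartz estimates for $n\geq3$, and duality with the $(N_1/N)^{2-a}$ time-interval subdivision and the $V^2$-version~\eqref{eq:bilinearV2Estimate} for $High\times High\to Low$. Your accounting of the three time scales and of where the threshold $s>(n-1)/2$ enters matches the paper's argument.
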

\begin{proof}
After using Littlewood-Paley theory we are reduced to the analysis of $High \times Low \rightarrow High$-, $High \times High \rightarrow High$- and $High \times High \rightarrow Low$-interaction. Carrying out the summation in the shorttime function spaces gives \eqref{eq:NonlinearEstimateI} and \eqref{eq:NonlinearEstimateII}.\\
Suppose that $N_3 \sim N_1 \gg N_2$. Then, we compute
\begin{equation*}
\begin{split}
\Vert P_{N_3} \partial_{x_1} (P_{N_1} u P_{N_2} v) \Vert_{N_{n_3}(T)} &\lesssim N_1 \Vert P_{N_1} u P_{N_2} v \Vert_{L^1_{T(N_3)} L_x^2} \\
&\lesssim N_1 N_1^{\frac{a-2}{2}} \Vert P_{N_1} u P_{N_2} v \Vert_{L_{T(N_3)}^2 L_x^2} \\
&\lesssim N_2^{n-1/2} \Vert P_N u \Vert_{F_n} \Vert P_K v \Vert_{F_k}
\end{split}
\end{equation*}
Suppose that $N_1 \sim N_2 \sim N_3$ and $n=2$. Using duality we have
\begin{equation}
\label{eq:HighHighHighNonlinearity}
\Vert P_{N_1} \partial_{x_1} (P_{N_2} u P_{N_3} v) \Vert_{N_{n_1}} = \sup_{\Vert w \Vert_{V_0^2} = 1} \int \int P_{N_1} w \partial_{x_1} (P_{N_2} u P_{N_3} v) dx dt
\end{equation}
Now, we use Proposition \ref{prop:HighHighHighInteraction} to apply a bilinear Strichartz estimate on two factors, say $w$ and $u$, to find
\begin{equation*}
\begin{split}
\eqref{eq:HighHighHighNonlinearity} &\lesssim N_1 \sup_{w} \Vert P_{N_1} w P_{N_2} u \Vert_{L^2_{t,x}} \Vert P_{N_3} v \Vert_{L^2_{t,x}} \\
&\lesssim N_1 N_2^{\frac{1-a}{2}} \log \langle N_2 \rangle \sup_w \Vert P_{N_1} w \Vert_{V^2} \Vert P_{N_2} u \Vert_{V^2} N_3^{a/2-1} \Vert P_{N_3} v \Vert_{F_{n_3}}
\end{split}
\end{equation*}
which is sufficient.\\
For $n \geq 3$ we use two $L^4_{t,x}$-Strichartz estimates instead:
\begin{equation*}
\begin{split}
\Vert P_{N_3} \partial_{x_1} (P_{N_1} u P_{N_2} v) \Vert_{N_{n_3}(T)} &\lesssim N_3 \Vert P_{N_1} u P_{N_2} v \Vert_{L^1_{T(N_3)} L_x^2} \\
&\lesssim N_3 N_3^{\frac{a-2}{2}} N^{\frac{n-(a+1)}{2}} \Vert P_{N_1} u \Vert_{F_{n_1}} \Vert P_{N_2} v \Vert_{F_{n_2}} \\
&\lesssim N_3^{\frac{n-1}{2}} \Vert P_{N_1} u \Vert_{F_{n_1}} \Vert P_{N_2} v \Vert_{F_{n_2}},
\end{split}
\end{equation*}
which is again sufficient.\\
Finally, suppose that $N_3 \ll N_1 \sim N_2$ . Here, we have to add localization in time which amounts to a factor $(N_1/N_3)^{2-a}$. Again we use duality to write
\begin{equation*}
\begin{split}
\Vert P_{N_3} \partial_{x_1} (P_{N_1} u P_{N_2} v) \Vert_{N_{n_3}} &\lesssim N_3 (N_1/N_3)^{2-a} \sup_{w} \int \int P_{N_3} w P_{N_1} u P_{N_2} v dx dt \\
&\lesssim N_3 \left( \frac{N_1}{N_3} \right)^{2-a} \sup_w \Vert P_{N_3} w P_{N_1} u \Vert_{L^2_{t,x}} \Vert P_{N_2} v \Vert_{L^2_{t,x}} \\
&\lesssim N_3 \left( \frac{N_1}{N_3} \right)^{2-a} N_1^{\frac{a-2}{2}} \left( \frac{N_3^{n-1}}{N_1^a} \right)^{1/2} \log^2 \langle N_1 \rangle \Vert P_{N_1} u \Vert_{F_{n_1}} \Vert P_{N_2} v \Vert_{F_{n_2}} \\
&\lesssim (N_1/N_3)^{1-a} N_3^{\frac{n-1}{2}} \log^2 \langle N_1 \rangle \Vert P_{N_1} u \Vert_{F_{n_1}} \Vert P_{N_2} v \Vert_{F_{n_2}}
\end{split}
\end{equation*}
and again carrying out the summation is straight-forward for $s>(n-1)/2$.
\end{proof}
\section{Energy estimates}
\label{section:EnergyEstimates}
First, we turn to the energy estimate which will yield a priori estimates provided that $s>s_a$:
\begin{proposition}
Let $n \geq 2$, $1 \leq a <2$ and let $u$ be a smooth solution to \eqref{eq:fractionalBenjaminOnoEquation}. Then, we find the following estimate to hold
\begin{equation}
\label{eq:EnergyEstimatesSolutions}
\Vert u \Vert^2_{E^s(T)} \lesssim \Vert u_0 \Vert^2_{H^s} + T \Vert u \Vert_{F^s(T)}^3
\end{equation}
provided that $s>s_a$.
\end{proposition}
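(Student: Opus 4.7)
The plan is to differentiate $\|P_N u(t)\|_{L^2}^2$ in time, use that the linear propagator $S_a$ is unitary on $L^2$ to kill the dispersive contribution, and then estimate the resulting trilinear space-time integral via a Littlewood--Paley decomposition combined with the bilinear and linear Strichartz estimates of Section \ref{section:BilinearStrichartzEstimates}. Concretely, integration in time yields
\begin{equation*}
\|P_N u(T)\|_{L^2}^2 - \|P_N u_0\|_{L^2}^2 = \int_0^T\!\!\int_{\R^n} P_N u \cdot P_N \partial_{x_1}(u^2)\,dx\,dt,
\end{equation*}
and inserting $u = \sum_{N_1} P_{N_1} u$ decomposes the right-hand side into dyadic trilinear pieces $J_{N,N_1,N_2}$ which by frequency support are non-negligible only in the three standard regimes High$\times$Low$\to$High ($N\sim N_1\gg N_2$ up to relabeling), High$\times$High$\to$High ($N\sim N_1\sim N_2$), and High$\times$High$\to$Low ($N\ll N_1\sim N_2$).

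For each triple $(N,N_1,N_2)$, I would partition $[0,T]$ into $\lesssim TN_\ast^{2-a}$ subintervals $I_j$ of length $N_\ast^{a-2}$ with $N_\ast = \max(N,N_1,N_2)$, matching the frequency-dependent time scale built into $F_a^s$. On each $I_j$ apply Cauchy--Schwarz in $t$ together with the appropriate Strichartz input: the transversal bilinear estimate \eqref{eq:bilinearU2Estimate} in the High$\times$Low case and, after integration by parts, in the High$\times$High$\to$Low case; the High$\times$High$\times$High transversality of Proposition \ref{prop:HighHighHighInteraction} when $n=2$; and two copies of the $L^4_{t,x}$-Strichartz estimate from Proposition \ref{prop:LinearStrichartzEstimates} when $n\geq 3$. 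In the High$\times$Low case the standard symmetrization
\begin{equation*}
\int P_N u \cdot \partial_{x_1}(P_N u\cdot P_{N_2}u)\,dx = \tfrac{1}{2}\int (P_N u)^2\,\partial_{x_1}P_{N_2}u\,dx
\end{equation*}
reduces the apparent derivative loss from $N$ to the low frequency $N_2$, while the associated commutator $[P_N,P_{N_2}u]$ gains a derivative by Calder\'on-type estimates and is harmless. An analogous integration by parts in the High$\times$High$\to$Low case places $\partial_{x_1}$ onto the low-frequency test function $P_N u$, so that \eqref{eq:bilinearU2Estimate} applied to the transversal pair $(P_N u,P_{N_1}u)$ together with $P_{N_2}u\in L^\infty_t L^2_x$ closes the estimate.

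Summing the per-interval bounds over $\lesssim TN_\ast^{2-a}$ intervals produces the $T$ prefactor in \eqref{eq:EnergyEstimatesSolutions}, after which the dyadic summation in $(N,N_1,N_2)$ should close at the threshold $s > s_a = (n+3)/2 - a$. The main obstacle is attaining this sharp threshold: the $N_\ast^{(a-2)/2}$ shorttime loss from Cauchy--Schwarz in $t$, the $N^{-a/2}$ gain from transversal bilinear Strichartz, and the $N_\ast^{2-a}$ cost of counting intervals all have to be balanced simultaneously against the $\partial_{x_1}$ derivative and the energy weight $N^{2s}$ in each of the three regimes. Because the energy space $E^s$ carries no built-in shorttime localization, the threshold sits a full $2-a$ derivatives above the nonlinear threshold $(n-1)/2$ of Section \ref{section:NonlinearEstimates}, and this gap is precisely what the combination of symmetrization, bilinear Strichartz, and the short time scale $N_\ast^{a-2}$ is designed to recover.
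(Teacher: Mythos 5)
Your proposal follows essentially the same strategy as the paper: the fundamental theorem of calculus for $\|P_N u(t)\|_{L^2}^2$, Littlewood--Paley decomposition into the three interaction types, frequency-dependent time partitioning into $\lesssim TN_\ast^{2-a}$ intervals of length $N_\ast^{a-2}$, symmetrization/integration by parts to transfer the derivative to the low frequency in the $High\times Low\to High$ case, and bilinear Strichartz estimates combined with Cauchy--Schwarz in time on each subinterval to balance the bookkeeping. You are in fact slightly more explicit than the paper's own proof about the genuinely comparable-frequency $High\times High\to High$ case (invoking Proposition~\ref{prop:HighHighHighInteraction} for $n=2$ and two $L^4_{t,x}$-Strichartz estimates for $n\geq 3$, as in Section~\ref{section:NonlinearEstimates}), whereas the paper compresses everything with $N_1\lesssim N_2\sim N_3$ into one remark; this is a welcome clarification and not a deviation.
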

\begin{proof}
The fundamental theorem of calculus yields
\begin{equation*}
\Vert P_N u(t) \Vert^2_{L^2} = \Vert P_N u_0 \Vert^2_{L^2} + \int_{0}^t ds \int_{\R^n} dx P_{N} u P_{N} \partial_{x_1} (u^2)
\end{equation*}
The time integral we treat with Littlewood-Paley decompositions and analyze the possible interactions separately.\\
Suppose that $N_1 \sim N_3 \gg N_2$. Then integration by parts and a commutator estimate yields after localization in time to intervals of size $N_1^{2-a}$
\begin{equation*}
\begin{split}
\left| \int_{I} \int_{\R^n} P_{N_1} u \partial_{x_1} (P_{N_2} u P_{N_3} u) dx dt \right| &\lesssim N_2 T N_1^{2-a} \Vert P_{N_1} u P_{N_2} u \Vert_{L^2_{t,x}} \Vert P_{N_3} u \Vert_{L^2_{t,x}} \\
&\lesssim T N_2 N_1^{2-a} \left( \frac{N_2^{n-1}}{N_1^{a}} \right)^{1/2} N_1^{\frac{a-2}{2}} \prod_i \Vert P_{N_i} u \Vert_{F_{n_i}} \\
&\lesssim T N_2^{s_a} \left( \frac{N_2}{N_1} \right)^{a-1} \prod_{i} \Vert P_{N_i} u \Vert_{F_{n_i}}
\end{split}
\end{equation*}
In case $N_1 \lesssim N_2 \sim N_3$ there is no point to integrate by parts, but apart from that the estimate is concluded along the lines of the above argument.
\end{proof}
Next, we proof the energy estimates which will yield Lipschitz continuity in $L^2$ for initial data in $H^s$, $s>s_a$ and continuity of the data-to-solution mapping after invoking the Bona-Smith approximation.
\begin{proposition}
Let $n\geq 2$, $ 1 \leq a < 2$ and $u_1,u_2$ be two smooth solutions to \eqref{eq:fractionalBenjaminOnoEquation} and denote $v=u_1-u_2$. Then, we find the following estimate to hold
\begin{align}
\label{eq:LipschitzContinuityL2}
\Vert v \Vert^2_{E^0(T)} &\lesssim \Vert v(0) \Vert_{L^2}^2 + T \Vert v \Vert_{F^0(T)}^2 ( \Vert u_1 \Vert_{F^s(T)} + \Vert u_2 \Vert_{F^s(T)}) \\
\label{eq:ContinuityHs}
\Vert v \Vert^2_{E^s(T)} &\lesssim \Vert v(0) \Vert_{H^s}^2 + T \Vert v \Vert_{F^s(T)}^3 + T \Vert v \Vert_{F^s(T)}^2 \Vert u_2 \Vert_{F^s(T)} + T \Vert v \Vert_{F^0(T)} \Vert v \Vert_{F^s(T)} \Vert u_2 \Vert_{F^{2s}(T)}
\end{align}
provided that $s>s_a$.
\end{proposition}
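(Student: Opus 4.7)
The plan is to mimic the a priori energy estimate \eqref{eq:EnergyEstimatesSolutions}, now applied to the difference $v = u_1 - u_2$, which by subtracting two copies of \eqref{eq:fractionalBenjaminOnoEquation} satisfies
\[
\partial_t v + \partial_{x_1}(-\Delta)^{a/2} v = \tfrac{1}{2}\,\partial_{x_1}\!\bigl(v(u_1+u_2)\bigr).
\]
Applying $P_N$, testing against $P_N v$, integrating via the fundamental theorem of calculus, and summing with weight $N^0$ for \eqref{eq:LipschitzContinuityL2} or $N^{2s}$ for \eqref{eq:ContinuityHs} reduces both inequalities to trilinear bounds on
\[
J_{N_1,N_2,N_3}(v,v,u) = \int_I\!\int_{\R^n} P_{N_1} v \cdot \partial_{x_1}\!\bigl(P_{N_2} v \cdot P_{N_3} u\bigr)\,dx\,dt,
\]
with $u \in \{u_1,u_2\}$ and $|I| = \max(N_1,N_2,N_3)^{a-2}$. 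The case analysis (High-Low output High, High-High output High, High-High output Low, and balanced High-High-High) proceeds in complete analogy with the proof of \eqref{eq:EnergyEstimatesSolutions}: integration by parts with a commutator step together with \eqref{eq:bilinearV2Estimate} when two frequencies coincide with the output; Proposition~\ref{prop:HighHighHighInteraction} combined with \eqref{eq:bilinearV2Estimate} for the planar High-High-High interaction; and a pair of $L^4_{t,x}$ Strichartz estimates from Proposition~\ref{prop:LinearStrichartzEstimates} for that interaction in dimension $n\geq 3$.

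For the $L^2$-Lipschitz bound \eqref{eq:LipschitzContinuityL2} I would assign both $v$-slots the weight $N_i^0$ and give the single $u_i$-slot the full weight $N_3^s$. This weight allocation is consistent with every estimate that underpins \eqref{eq:EnergyEstimatesSolutions}, since the summability there was always driven by an $N_2^{s_a}$-type factor living on one low-frequency input; by symmetry one can always arrange for the surviving $N^s$-weight to sit on the $u_i$-slot. Summing dyadically in $N_1,N_2,N_3$ then produces $T\Vert v\Vert_{F^0(T)}^2\Vert u_i\Vert_{F^s(T)}$, after which the two contributions $i=1,2$ are added.

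For \eqref{eq:ContinuityHs} the natural first step is to split the nonlinearity as $v(u_1+u_2) = v^2 + 2\,v\,u_2$. The contribution of $\partial_{x_1}(v^2)$ is handled exactly as \eqref{eq:EnergyEstimatesSolutions} with $u$ replaced by $v$ and produces the cubic term $T\Vert v\Vert_{F^s(T)}^3$. The mixed contribution $\partial_{x_1}(v u_2)$ is handled case by case: in the regimes where both $v$-slots carry comparable high frequencies, each receives its natural weight $N_i^s$ and one recovers $T\Vert v\Vert_{F^s(T)}^2\Vert u_2\Vert_{F^s(T)}$; in the remaining regime, namely a single low-frequency $v$-slot paired with comparable high-frequency $v$- and $u_2$-slots, the weights are instead distributed as $N_1^0$, $N_2^s$, $N_3^{2s}$, which after dyadic summation yields the final term $T\Vert v\Vert_{F^0(T)}\Vert v\Vert_{F^s(T)}\Vert u_2\Vert_{F^{2s}(T)}$. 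This asymmetric reallocation is the standard Bona-Smith move of trading regularity on the approximating solution against regularity on the difference.

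The main obstacle I anticipate is the two-dimensional High-High-High case inside \eqref{eq:LipschitzContinuityL2}: since both $v$-factors sit only in $F^0$, there is no regularity to spare, and only the $|\xi|^a$-transversality of Proposition~\ref{prop:HighHighHighInteraction}, combined with the time localization on intervals of length $N^{a-2}$, supplies enough gain to absorb the full derivative $\partial_{x_1}$ and still leave a positive power of $N^{-s_a}$ to be paid by the $F^s$-weight on the single $u_i$-slot. Once this interaction is dispatched the remaining regimes are mechanical power counting, in direct analogy with the proof of \eqref{eq:EnergyEstimatesSolutions}, and the threshold $s>s_a$ arises for the same reason as there.
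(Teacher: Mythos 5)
Your proposal is essentially the paper's own argument: the difference equation $\partial_t v + \partial_{x_1}(-\Delta)^{a/2} v = v\partial_{x_1}v + \partial_{x_1}(vu_2)$, the reduction to the trilinear form via the fundamental theorem of calculus, the case analysis with integration by parts retained in the regime where both $v$-slots carry comparable high frequencies, the full $F^s$-weight on the $u_i$-slot for \eqref{eq:LipschitzContinuityL2}, and the asymmetric $F^0$--$F^s$--$F^{2s}$ allocation on $v$, $v$, $u_2$ for the non-integrable regime of \eqref{eq:ContinuityHs}. Your anticipated obstacle in the balanced planar interaction is resolved exactly as you suggest, by the transversality of Proposition~\ref{prop:HighHighHighInteraction} together with the $N^{a-2}$ time localization, which is the same device the paper relies on.
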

\begin{proof}
Performing the same reductions like above we have to estimate
\begin{equation*}
\left| \int \int P_{N_1} v \partial_{x_1} (P_{N_2} u P_{N_3} v) dx dt \right|
\end{equation*}
for $N_1 \sim N_3 \gg N_2$, $N_1 \lesssim N_2 \sim N_3$ and $N_3 \lesssim N_1 \sim N_2$.\\
The first case can be dealt with like in the corresponding estimate for solutions because we can still integrate by parts.\\
The second case does not require integration by parts and thus can be estimated like above. Finally, for the case $N_1 \lesssim N_2 \sim N_3$ we estimate
\begin{equation*}
\begin{split}
&\lesssim N_1 T N_1^{2-a} \Vert P_{N_1} v P_{N_3} v \Vert_{L^2_{t,x}} \Vert P_{N_2} u \Vert_{L^2_{t,x}} \\
&\lesssim T N_1^{2-a} N_3^{\frac{n-1}{2}} \Vert P_{N_1} v \Vert_{F_{n_1}} \Vert P_{N_2} u \Vert_{F_{n_2}} \Vert P_{N_3} v \Vert_{F_{n_3}}
\end{split}
\end{equation*}
This yields \eqref{eq:LipschitzContinuityL2} after summation.\\
To prove \eqref{eq:ContinuityHs} one writes
\begin{equation*}
\partial_t v + \partial_{x_1} |D|^{a} v = v \partial_{x_1} v + \partial_{x_1} (u_2 v)
\end{equation*}
The first term has the same symmetries like the term we encountered when proving a priori estimates for solutions. For the second term the only new estimate one has to carry out (due to impossibility to integrate by parts) is
\begin{equation*}
\sum_{1 \leq K \lesssim N} N^{2s} \int \int P_N v \partial_{x_1} (P_N u_2 P_K v) dx dt \lesssim T \Vert v \Vert_{F^0(T)} \Vert v \Vert_{F^s(T)} \Vert u_2 \Vert_{F^{2s}(T)}
\end{equation*}
which follows by the above means.
\end{proof}
\section{Proof of Theorem \ref{thm:LocalWellposednessEuclideanSpace}}
We shall be brief because the concluding arguments are already standard (cf. \cite{IonescuKenigTataru2008}). Below fix $s>s_a$.\\
By rescaling we are reduced to consider sufficiently small initial data. Firstly, we only consider initial data $u_0 \in H^\infty(\R^n)$. The energy method yields existence of solutions in $C([0,T^*],H^s(\R^n))$ for $s>n/2+1$, where $\lim_{T \to T^*} \Vert u(t) \Vert_{H^{2s}} = \infty$.\\
In a first step, we prove a priori estimates from
\begin{equation*}
\left\{\begin{array}{cl}
\Vert u \Vert_{F^s(T)} &\lesssim \Vert u \Vert_{E^s(T)} + \Vert \partial_{x_1} (u^2) \Vert_{N^s(T)} \\
\Vert u \partial_{x_1} u \Vert_{N^s(T)} &\lesssim \Vert u \Vert_{F^s(T)}^2 \\
\Vert u \Vert^2_{E^s(T)} &\lesssim \Vert u_0 \Vert_{H^s}^2 + T \Vert u \Vert_{F^s(T)}^3 \end{array} \right.
\end{equation*}
for solutions to \eqref{eq:fractionalBenjaminOnoEquation} by a bootstrap argument for $s>\frac{n+3}{2}-a$.\\
The above set of estimates yields
\begin{equation*}
\Vert u \Vert^2_{F^s(T)} \lesssim \Vert u_0 \Vert_{H^s}^2 + \Vert u \Vert^4_{F^s(T)} + T \Vert u \Vert^3_{F^s(T)}
\end{equation*}
Next, we invoke continuity of $E^s(T)$ and
\begin{equation*}
\lim_{T \to 0} \Vert u \Vert_{E^s(T)} \lesssim \Vert u_0 \Vert_{H^s}, \quad \lim_{T \to 0} \Vert \partial_{x_1} (u^2) \Vert_{N^s(T)} =0
\end{equation*}
For details see e.g. \cite{KochTataru2007}.\\
Consequently, the above set of estimates yields
\begin{equation}
\label{eq:FsControlSolutions}
\Vert u \Vert_{F^s(1)} \lesssim \Vert u_0 \Vert_{H^s}
\end{equation}
provided that $\Vert u_0 \Vert_{H^s}$ is chosen sufficiently small.\\
For $s^\prime > s$ we have
\begin{equation*}
\left\{\begin{array}{cl}
\Vert u \Vert_{F^{s^\prime}(T)} &\lesssim \Vert u \Vert_{E^{s^\prime}(T)} + \Vert \partial_{x_1} (u^2) \Vert_{N^{s^\prime}(T)} \\
\Vert u \partial_{x_1} u \Vert_{N^{s^\prime}(T)} &\lesssim \Vert u \Vert_{F^{s^\prime}(T)} \Vert u \Vert_{F^s(T)} \\
\Vert u \Vert^2_{E^{s^\prime}(T)} &\lesssim \Vert u_0 \Vert_{H^{s^\prime}}^2 + T \Vert u \Vert_{F^{s^\prime}(T)}^2 \Vert u \Vert_{F^s(T)} \end{array} \right.
\end{equation*}
Together with \eqref{eq:FsControlSolutions} this implies
\begin{equation*}
\Vert u \Vert_{F^{s^\prime}(1)} \lesssim \Vert u_0 \Vert_{H^{s^\prime}} \text{ for } s^\prime > s.
\end{equation*}
This a priori estimate for higher regularities together with the blow-up alternative shows that $T^* \geq 1$ provided that $\Vert u_0 \Vert_{H^s}$ is chosen sufficiently small.\\
Next, we argue that the set of estimates
\begin{equation*}
\left\{\begin{array}{cl}
\Vert v \Vert_{F^0(T)} &\lesssim \Vert v \Vert_{E^0(T)} + \Vert \partial_{x_1} (v (u_1+u_2)) \Vert_{N^0(T)} \\
\Vert \partial_{x_1} (v u_i) \Vert_{N^0(T)} &\lesssim \Vert v \Vert_{F^0(T)} \Vert u_i \Vert_{F^s(T)} \\
\Vert u \Vert^2_{E^0(T)} &\lesssim \Vert v(0) \Vert_{L^2}^2 + T \Vert v \Vert_{F^0(T)}^2 ( \Vert u_1 \Vert_{F^s(T)} + \Vert u_2 \Vert_{F^s(T)}) \end{array} \right.
\end{equation*}
yield an a priori estimate for $v$ in $L^2$ in dependence of $\Vert u_i \Vert_{H^s}$ for $s>\frac{n+3}{2} - a$.\\
Finally, the set of estimates
\begin{equation*}
\left\{\begin{array}{cl}
\Vert v \Vert_{F^s(T)} &\lesssim \Vert v \Vert_{E^s(T)} + \Vert \partial_{x_1} (v (u_1 + u_2)) \Vert_{N^s(T)} \\
\Vert \partial_{x_1} (v u_i) \Vert_{N^s(T)} &\lesssim \Vert v \Vert_{F^s(T)} \Vert u_i \Vert_{F^s(T)} \\
\Vert v \Vert^2_{E^s(T)} &\lesssim \Vert v(0) \Vert_{H^s}^2 + T \Vert v \Vert_{F^s(T)}^3 + T \Vert v \Vert_{F^0(T)} \Vert v \Vert_{F^s(T)} \Vert u_2 \Vert_{F^{2s}(T)} \end{array} \right.
\end{equation*}
allows us to conclude continuous dependence on the initial data by the classical Bona-Smith approximation (cf. \cite{BonaSmith1975,IonescuKenigTataru2008}).\\
For this purpose, let $u_2$ be the solution associated to $P_{\leq N} u_0$ and $u_1$ be the solution associated to $u_0$.\\
Due to the difference of initial data consisting only of high frequencies, the gain from estimating $\Vert v \Vert_{F^0}$ compensates the loss from estimating
\begin{equation*}
\Vert u_2 \Vert_{F^{2s}} \lesssim \Vert P_{\leq N} u_0 \Vert_{H^{2s}} \lesssim N^s \Vert P_{\leq N} u_0 \Vert_{H^s}
\end{equation*}
The data-to-solution mapping $H^s \rightarrow C([0,T],H^s) \cap F^s(T)$, which can also be constructed by the above means, is continuous, but not uniformly continuous because the approximation depends on the distribution of the Sobolev energy along the high frequencies, i.e., $\Vert P_{\geq N} u_0 \Vert_{H^s}$.

\section{Periodic solutions to fractional Zakharov-Kuznetsov equations}
Below, the above considerations regarding shorttime nonlinear and energy estimates are extended to the fully periodic case. Firstly, the function spaces are introduced.
\subsection{Function spaces in the periodic case}
Here, shorttime $X^{s,b}$-spaces adapted to periodic solutions are used (cf. \cite{Zhang2016}) to overcome the derivative loss. We will be brief because the function spaces are defined completely analogous to \cite{Zhang2016} with the basic function space properties remaining valid.\\
The dispersion relation for the two-dimensional Zakharov-Kuznetsov equation we denote by
\begin{equation*}
\omega(\xi,\eta) = \xi^3 + \xi \eta^2
\end{equation*}
For $k \in \mathbb{N}$ let $I^x_k = \{ \xi \in \R | |\xi| \in [2^{k-1},2^k) \}$ denote dyadic ranges on the real line and $I_k = \{ (\xi,\eta) \in \R^2 | |(\xi,\eta)| \in [2^{k-1}, 2^k) \}$. By $P_k$ and $P_{k,x}$ we denote the corresponding frequency projectors, i.e.,
\begin{equation*}
\begin{split}
 \widehat { P_{k,x} u } (\xi,\eta) &= 1_{I_{k,x}}(\xi) \hat{u}(\xi,\eta) \quad u \in L^2(\T^2) \\
\widehat { P_{k} u } (\xi,\eta) &= 1_{I_k}(\xi,\eta) \hat{u}(\xi,\eta)
\end{split}
\end{equation*}
Most of the time it will be fine to work with sharp cutoffs though in Subsection \ref{subsection:PeriodicEnergyEstimates} we adapt to smooth cutoffs, which will be denoted by $\tilde{P}_{k,x}$ or $\tilde{P}_k$, respectively.\\
For a time $T_0 \in (0,1]$, let $k_0 \geq 0$ be the greatest integer $k$ such that $2^k < 1/T_0$. For $k \in \mathbb{N} \cup \{ 0 \}$ define the dyadic $X^{s,b}$-type normed spaces
\begin{equation*}
\begin{split}
X_k &= X_k(\mathbb{Z}^2 \times \R) = \{ f \in L^2(\mathbb{Z}^2 \times \R) \; | \; f \text{ is supported in } I_k \times \mathbb{Z} \times \R \text{ and } \\
\Vert f_k \Vert_{X_k} &= 2^{k_0/2} \Vert \eta_{\leq k_0}(\tau - \omega(\xi,\eta)) f \Vert_{L^2_{\xi,\eta,\tau}} \\
&+ \sum_{j=k_0+1}^\infty 2^{j/2} \Vert \eta_j(\tau - \omega(\xi,\eta)) f \Vert_{L^2_{\xi,\eta,\tau}} < \infty \}
\end{split}
\end{equation*}
Recall the basic properties (\cite[Remark~2.1,~p.~259]{Zhang2016}):
For $f_k \in X_k$ we find the following estimate to hold:
\begin{equation*}
\Vert \int_{\R} |f_k(\xi,\eta,\tau)| d\tau \Vert_{L^2_{\xi,\eta}} \lesssim \Vert f_k \Vert_{X_k}
\end{equation*}
If $k,l \in \mathbb{N} \cap \{0 \}$, $l \geq k_0$ and $f_k \in X_k$, then
\begin{equation*}
\label{eq:LowModulationEstimate}
\begin{split}
&\sum_{j=l+1}^\infty 2^{j/2} \Vert \eta_j(\tau - \omega(\xi,\eta)) \int |f_k(\xi,\eta,\tau^\prime) 2^{-l} (1+2^{-l}|\tau-\tau^\prime|)^{-4} d\tau^\prime \Vert_{L^2} \\
&+ 2^{l/2} \Vert \eta_{\leq l}(\tau - \omega(\xi,\eta)) \int |f_k(\xi,\eta,\tau^\prime) 2^{-l} (1+2^{-l}|\tau-\tau^\prime|)^{-4} d\tau^\prime \Vert_{L^2} \lesssim \Vert f_k \Vert_{X_k}
\end{split}
\end{equation*}
Consequently, for $f \in X_k$ we find for $l \geq k_0$, $t_0 \in \R$, $\gamma \in \mathcal{S}(\R)$
\begin{equation*}
\Vert \mathcal{F}_{t,x}[\gamma (2^l t - t_0) \mathcal{F}^{-1}_{t,x}(f)] \Vert_{X_k} \lesssim_{\gamma} \Vert f \Vert_{X_k}
\end{equation*}
The $X_k$-spaces relate to the space-time Fourier transform of the original functions after frequency localization. Let 
\begin{equation*}
\begin{split}
E_{k,x} &= \{ \phi: \T^2 \rightarrow \R | \; \hat{\phi}  \text{ supported in }  I_k \times \Z, \; \Vert \phi \Vert_{E_k} = \Vert \hat{\phi} \Vert_{\ell^2_{\xi,\eta}} < \infty \} \\
E_k &= \{ \phi: \T^2 \rightarrow \R | \; \hat{\phi}  \text{ supported in } \; A_k, \; \Vert \phi \Vert_{E_k} = \Vert \phi \Vert_{L^2} < \infty \}
\end{split}
\end{equation*}
Next, define
\begin{equation*}
\begin{split}
F_{k,x} &= \{ u_k \in C(\R;E_{k,x}) | \Vert u_k \Vert_{F_{k,x}} = \sup \Vert \mathcal{F}[ u_k \eta_0(2^k(t-t_k))] \Vert_{X_k} < \infty \} \\
N_{k,x} &= \{ u_k \in C(\R;E_{k,x}) | \Vert u_k \Vert_{N_{k,x}} =  \sup_{t_k \in \R} \Vert (\tau - \omega(\xi,\eta)+i2^k)^{-1} \mathcal{F}[ u_k \eta_0(2^k(t-t_k))] \Vert_{X_k} < \infty \}
\end{split}
\end{equation*}
and for $T \in (0,T_0]$, let
\begin{equation*}
\begin{split}
F_{k,x}(T) = \{ u_k \in C([-T,T]; E_{k,x}) | \Vert u_{k} \Vert_{F_{k,x}(T)} = \inf_{\substack{ \tilde{u}_k = u_k \text{ in }\\ \T^2 \times [-T,T] }} \Vert \tilde{u}_k \Vert_{F_{k,x}} < \infty \} \\
N_{k,x}(T) = \{ u_k \in C([-T,T]; E_{k,x}) | \Vert u_{k} \Vert_{N_{k,x}(T)} = \inf_{\substack{ \tilde{u}_k = u_k \text{ in }\\ \T^2 \times [-T,T] }} \Vert \tilde{u}_k \Vert_{N_{k,x}} < \infty \}
\end{split}
\end{equation*}
The spaces $F^s_x(T)$, $N_x^s(T)$, $E^s_x(T)$ are assembled by Littlewood-Paley theory. Let $C=C([-T,T],H^\infty_0(\T^2))$ and define
\begin{equation*}
\begin{split}
F^s_x(T) &= \{ u \in C | \Vert u \Vert^2_{F_x^s(T)} = \sum_{k \in \mathbb{N}_0} (2^{2sk} + 2^{2s k_0}) \Vert P_{k,x} u \Vert^2_{F_{k,x}(T)} < \infty \} \\
N^s_x(T) &= \{ u \in C | \Vert u \Vert^2_{N_x^s(T)} = \sum_{k \in \mathbb{N}_0} (2^{2sk} + 2^{2s k_0}) \Vert P_{k,x} u \Vert^2_{N_{k,x}(T)} < \infty \} \\
E^s_x(T) &= \{ u \in C | \Vert u \Vert^2_{E^s_x(T)} = \Vert P_{\leq k_0, x} u(0) \Vert^2_{H^s} + \sum_{k \geq k_0} \sup_{t_k \in [-T,T]} 2^{2sk} \Vert P_{k,x} u(t_k) \Vert^2_{E_k} < \infty \}
\end{split}
\end{equation*}
The isotropic pendant spaces $F_k,N_k,F_k(T),N_k(T),F^s(T),N^s(T),E^s(T)$ are defined mutatis mutandi, replacing the anisotropic frequency projector $P_{k,x}$ with $P_k$. The multiplier properties (cf. \cite[p.~260]{Zhang2016}) hold independent of the dispersion relation.

\subsection{Bilinear estimates}
\label{subsection:PeriodicBilinearEstimates}
Next, we derive bilinear convolution estimates for space-time Fourier transforms of functions localized in frequency and modulation.\\
Due to the lack of dispersion, Strichartz estimates like in Euclidean space are not available. Still the supports of the involved functions are estimated by properties of the resonance function. First, we focus on the two-dimensional Zakharov-Kuznetsov dispersion
\begin{equation}
\begin{split}
\Omega(\xi_1,\xi_2,\eta_1,\eta_2) &= \omega(\xi_1+\xi_2,\eta_1+\eta_2) - \omega(\xi_1,\eta_1) - \omega(\xi_2,\eta_2) \\
&= (\xi_1+\xi_2)^3 - \xi_1^3 - \xi_2^3 + (\xi_1+\xi_2)(\eta_1+\eta_2)^2 - \xi_1 \eta_1^2 - \xi_2 \eta_2^2 \\
&= 3(\xi_1+\xi_2)\xi_1 \xi_2 + \xi_1 \eta_2 (2\eta_1 + \eta_2) + \xi_2 \eta_1 (\eta_1 + 2 \eta_2)
\end{split}
\end{equation}
For $k,j \in \mathbb{N}_0$, let
\begin{equation*}
\begin{split}
D^x_{k,\leq j} &= \{ (\xi,\eta,\tau) | \xi \in I_k, \; | \tau - \omega(\xi,\eta) | \leq 2^j \} \\
D_{k, \leq j} &= \{ (\xi,\eta,\tau) | (\xi,\eta) \in A_k, \; | \tau - \omega(\xi,\eta) | \leq 2^j \}
\end{split}
\end{equation*}
First, we give an estimate in the anisotropic case:
\begin{lemma}
\label{lem:anisotropicEstimate}
Let $k_i,j_i \in \mathbb{N}$, $f_i : \mathbb{Z}^2 \times \R \rightarrow \R_+, \; f_i \in L^2, supp(f_i) \subseteq D^x_{k_i,\leq j_i}$ for $ i \in \{1,2,3\}$.\\
Then, we find the following estimate to hold:
\begin{equation}
\label{eq:anisotropicEstimate}
\int_{\Z^2 \times \R} (f_1 * f_2 ) f_3 \lesssim \Vert f_1 \Vert_{L^2} 2^{j_2/2} 2^{k_2/2} \Vert f_2 \Vert_{L^2} (1+2^{\frac{j_3-k_1}{4}}) \Vert f_3 \Vert_{L^2}
\end{equation}
\end{lemma}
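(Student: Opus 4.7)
The plan is to apply Cauchy-Schwarz twice and reduce the estimate to a support-counting problem on the set $A(y) := \{w \in \operatorname{supp}(f_2) : y + w \in \operatorname{supp}(f_3)\}$, where $y$ ranges over $\operatorname{supp}(f_1)$. The hard part will be counting integer $w_\eta$ subject to the quadratic constraint imposed by the Zakharov--Kuznetsov resonance function; the coordinates $w_\xi$ and $w_\tau$ will only contribute elementary dyadic factors.

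After the substitution $w = x - y$, one has $T := \int_{\Z^2\times\R}(f_1 \ast f_2) f_3 = \iint f_1(y) f_2(w) f_3(y+w)\,dy\,dw$. Factoring as $T = \int f_1(y)\, \tilde h(y)\,dy$ with $\tilde h(y) := \int f_2(w) f_3(y+w)\,dw$, a first Cauchy-Schwarz in $y$ (restricted to $\operatorname{supp}(f_1)$) gives $T \leq \Vert f_1\Vert_{L^2}\, \Vert \tilde h \cdot 1_{\operatorname{supp}(f_1)}\Vert_{L^2}$. A second Cauchy-Schwarz applied pointwise in the definition of $\tilde h$ yields $|\tilde h(y)|^2 \leq |A(y)| \int f_2(w)^2 f_3(y+w)^2\,dw$, so integrating in $y$ and using Fubini reduces the lemma to the pointwise bound
\begin{equation*}
\sup_{y \in \operatorname{supp}(f_1)} |A(y)|^{1/2} \lesssim 2^{k_2/2}\, 2^{j_2/2}\, (1 + 2^{(j_3-k_1)/4}).
\end{equation*}

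Now fix $y = (y_\xi, y_\eta, y_\tau)$ with $|y_\xi| \sim 2^{k_1}$ and count $w \in A(y)$ coordinate by coordinate. The integer $w_\xi$ must satisfy both $|w_\xi| \sim 2^{k_2}$ and $|w_\xi + y_\xi| \sim 2^{k_3}$, giving at most $O(2^{k_2})$ values. For fixed $w_\xi, w_\eta$, the two $\tau$-constraints force $w_\tau$ into an interval of length $\lesssim \min(2^{j_2}, 2^{j_3})$, and the intersection is non-empty only when
\begin{equation*}
|\omega(w_\xi + y_\xi, w_\eta + y_\eta) - \omega(w_\xi, w_\eta) - y_\tau| \lesssim 2^{j_2} + 2^{j_3}.
\end{equation*}
Expanding $\omega(\xi,\eta) = \xi^3 + \xi\eta^2$, the $w_\eta$-dependence of the left-hand side is $y_\xi w_\eta^2 + 2(y_\xi + w_\xi) y_\eta w_\eta + \text{const}$, a quadratic with leading coefficient of size $\sim 2^{k_1}$. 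Completing the square bounds the number of admissible integer $w_\eta$ by $O(1 + 2^{(\max(j_2, j_3) - k_1)/2})$. Multiplying the three factors gives $|A(y)| \lesssim 2^{k_2}(1 + 2^{(\max(j_2,j_3) - k_1)/2}) \min(2^{j_2}, 2^{j_3})$, whose square root is dominated by $2^{k_2/2}\, 2^{j_2/2}\, (1 + 2^{(j_3-k_1)/4})$ in both sub-cases $j_2 \leq j_3$ and $j_2 > j_3$ (in the second, the elementary inequality $j_3/4 \leq j_2/4$ absorbs the discrepancy in both the linear and the cross term). The principal obstacle is precisely this quadratic count for $w_\eta$, where the ZK dispersion's second-order $\eta$-behavior makes the leading coefficient exactly $y_\xi$ and thereby unlocks the $2^{k_1}$ gain; any argument relying only on the size of the resonance function $\Omega$ without exploiting its quadratic structure in one variable would not yield the correct exponent.
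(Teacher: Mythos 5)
Your proof is correct, and it shares the paper's key observation: the resonance function is quadratic in the low-frequency $\eta$-variable with leading coefficient of size $|y_\xi|\sim 2^{k_1}$, which is what produces the $2^{(j_3-k_1)/4}$ gain. The difference lies in how the Cauchy--Schwarz bookkeeping is organized. You use the symmetric ``support lemma'' scheme: one Cauchy--Schwarz in $y$, one in $w$ at once, reducing the estimate to $\sup_y |A(y)|^{1/2}$, and then you bound the three coordinates of $A(y)$ separately. This yields a bound featuring $\min(2^{j_2},2^{j_3})$ and $2^{(\max(j_2,j_3)-k_1)/2}$, so you need the final case distinction $j_2\lessgtr j_3$ to match the asymmetric target $2^{j_2/2}(1+2^{(j_3-k_1)/4})$; your verification of that is correct. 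The paper instead performs an asymmetric nested Cauchy--Schwarz: after centering each $f_i$ at its own phase ($f_i^{\#}$), it applies Cauchy--Schwarz first in $\eta_2$ alone (extracting the cardinality factor $1+2^{(j_3-k_1)/4}$ directly from the support of $f_3^{\#}$), then in $(\tau_1,\xi_1,\eta_1)$ (extracting $\Vert f_1\Vert\Vert f_3\Vert$ via a change of variables), and finally in $(\tau_2,\xi_2)$ (giving $2^{k_2/2}2^{j_2/2}$ from the support of $f_2^{\#}$). Because each $L^2$-norm and each cardinality factor is tied to a specific $f_i$ from the start, the paper's route produces the asymmetric exponents without any case analysis. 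Your version is somewhat more elementary and uniform; the paper's is tighter in its bookkeeping. Both are valid.
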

\begin{proof}
Set $f_i^{\#}(\xi,\eta,\tau) = f_i(\xi,\eta,\tau + \omega(\xi,\eta))$ for $i \in \{1,2,3\}$ such that $\Vert f_i^{\#} \Vert_{L^2} = \Vert f_i \Vert_{L^2}$ and
\begin{equation}
\label{eq:anisotropicBilinearReduction}
\int_{\Z^2 \times \R} (f_1 * f_2) f_3 = \int d\tau_1 d\tau_2 \sum_{\substack{ \xi_1,\eta_1, \\ \xi_2, \eta_2}} f_1^{\#}(\xi_1,\eta_1,\tau_1) f_2^{\#}(\xi_2,\eta_2,\tau_2) f_3^{\#}(\xi_1+\xi_2,\eta_1+\eta_2,\tau_1+\tau_2+\Omega),
\end{equation}
where $\text{supp}(f_i^{\#}) \subseteq \{ (\xi,\eta,\tau) | | \xi | \sim 2^{k_i}, \; | \tau| \leq 2^{j_i} \}$.\\
Observe that $|\partial^2 \Omega / \partial \eta_2^2 | \sim |\xi_1 | \sim 2^{k_1}$. Consequently, an application of the Cauchy-Schwarz inequality in $\eta_2$ yields
\begin{equation*}
\begin{split}
\eqref{eq:anisotropicBilinearReduction} &\lesssim \int d\tau_1 d\tau_2 \sum_{\xi_1,\eta_1,\xi_2} f_1^{\#}(\xi_1,\eta_1,\tau_1) (1+2^{\frac{j_3-k_1}{4}}) \\
&\times \left( \sum_{\eta_2} | f_2^{\#}(\xi_2,\eta_2,\tau_2)|^2 |f_3^{\#}(\xi_1+\xi_2,\eta_1+\eta_2,\tau_1+\tau_2+\Omega)|^2 \right)^{1/2} \\
&\lesssim \Vert f_1^{\#} \Vert_{L^2} (1+2^{\frac{j_3-k_1}{4}}) \Vert f_3^{\#} \Vert_{L^2} \sum_{\xi_2} \int d\tau_2 \left( \sum_{\eta_2} |f_2^{\#}(\xi_2,\eta_2,\tau_2)|^2 \right)^{1/2} \\
&\lesssim \Vert f_1^{\#} \Vert_{L^2} 2^{k_2/2} 2^{j_2/2} \Vert f_2^{\#} \Vert_{L^2} (1+2^{\frac{j_3-k_1}{4}}) \Vert f_3^{\#} \Vert_{L^2},
\end{split}
\end{equation*}
where the penultimate estimate follows from Cauchy-Schwarz in $\tau_1$, $\eta_1$ and $\xi_1$ and the last line follows from Cauchy-Schwarz in $\tau_2$ and $\xi_2$.
\end{proof}
Next, we turn to the isotropic case:
\begin{lemma}
Let $k_i,j_i \in \mathbb{N}$, $f_i: \Z^2 \times \R \rightarrow \R_+$, $f_i \in L^2$, $supp(f_i) \subseteq D_{k_i, \leq j_i}$ for $i \in \{1,2,3\}$.
\begin{enumerate}
\item[(a)] Let $|k_1-k_3| \leq 5$, $k_2 \leq k_1 - 10$. Then, we find the following estimate to hold:
\begin{equation}
\label{eq:IsotropicEstimateZKHighLow}
\int_{\Z^2 \times \R} (f_1 * f_2) f_3 \lesssim \Vert f_1 \Vert_{L^2} 2^{j_2/2} 2^{k_2/2} \Vert f_2 \Vert_{L^2} (1+ 2^{\frac{j_3-2k_1}{2}} ) \Vert f_3 \Vert_{L^2}
\end{equation}
\item[(b)] Let $|k_1 - k_2| \leq 5, |k_2-k_3| \leq 5$ and $j_i \geq k_i$. Then, we find the following estimate to hold:
\begin{equation}
\label{eq:IsotropicEstimateZKHighHighHigh}
\int_{\Z^2 \times \R} (f_1 * f_2) f_3 \lesssim 2^{-k_1/2} \prod_{i=1}^3 2^{j_i/2} \Vert f_i \Vert_{L^2}
\end{equation}
\item[(c)] The estimate
\begin{equation}
\label{eq:CauchySchwarzLocalizedFunctions}
\int_{\Z^2 \times \R} (f_1 * f_2) f_3 \lesssim 2^{k_{\min}} 2^{j_{\min}/2} \prod_{i=1}^3 \Vert f_i \Vert_{L^2}
\end{equation}
holds true.
\end{enumerate}
\end{lemma}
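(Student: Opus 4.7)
I would first pass to the shifted variables $f_i^{\#}(\xi,\eta,\tau)=f_i(\xi,\eta,\tau+\omega(\xi,\eta))$ as in the proof of Lemma~\ref{lem:anisotropicEstimate}, reducing the trilinear form to
\[
I = \int d\tau_1 d\tau_2 \sum_{\xi_1,\eta_1,\xi_2,\eta_2} f_1^{\#} f_2^{\#}\, f_3^{\#}(\xi_1+\xi_2,\eta_1+\eta_2,\tau_1+\tau_2-\Omega),
\]
with $|\tau_i|\leq 2^{j_i}$ for $i=1,2$ and $|\tau_1+\tau_2-\Omega|\leq 2^{j_3}$. All three parts then come down to counting the effective support of the convolution together with suitable applications of Cauchy--Schwarz.

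For (a) I would mirror the anisotropic argument, but this time use that in the high--low regime the \emph{first}-order derivative $\partial_{\xi_2}\Omega = 3\xi_1^2+\eta_1^2+6\xi_1\xi_2+2\eta_1\eta_2$ satisfies $|\partial_{\xi_2}\Omega| \sim \xi_1^2+\eta_1^2 \sim 2^{2k_1}$, since the correction from $(\xi_2,\eta_2)$ is $\lesssim 2^{k_1+k_2}\ll 2^{2k_1}$ when $k_2\leq k_1-10$. Consequently the map $\xi_2\mapsto\Omega$ is monotone with slope $\sim 2^{2k_1}$ on the support, so the integer resonance set $\{\xi_2:|\tau_1+\tau_2-\Omega|\leq 2^{j_3}\}$ has cardinality $\lesssim 1+2^{j_3-2k_1}$. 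Cauchy--Schwarz in $\xi_2$ then produces the multiplier $1+2^{(j_3-2k_1)/2}$; Cauchy--Schwarz in $(\xi_1,\eta_1,\tau_1)$ (combined with the unit-Jacobian change $(\xi_1,\eta_1,\tau_1)\to(\xi_1+\xi_2,\eta_1+\eta_2,\tau_1+\tau_2-\Omega)$) peels off $\|f_1\|\|f_3\|$, and a final Cauchy--Schwarz in $\eta_2$ (range $\sim 2^{k_2}$) and $\tau_2$ (range $\sim 2^{j_2}$) produces the factor $2^{k_2/2}2^{j_2/2}\|f_2\|$, exactly paralleling the proof of Lemma~\ref{lem:anisotropicEstimate}.

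For (b) the plan is to exploit the resonance identity $\Omega=(\tau_1-\omega_1)+(\tau_2-\omega_2)-(\tau_3-\omega_3)$, which forces $|\Omega|\lesssim 2^{j_{\max}}$ on the support, and then use $|I|^2\leq\|f_3\|_{L^2}^2\,\|f_1*f_2\|_{L^2(\mathrm{supp}(f_3))}^2$ together with the pointwise Cauchy--Schwarz $|(f_1*f_2)|^2\leq|\Theta|\sum\int f_1^2f_2^2$. The $\tau_1$-slice of $\Theta$ has length $\lesssim\min(2^{j_1},2^{j_2})$; the $(\xi_1,\eta_1)$-slice lies in the near-resonance set $\{(\xi_1,\eta_1)\in A_{k_1}:|\Omega(\xi_1,\eta_1,\xi_3-\xi_1,\eta_3-\eta_1)|\lesssim 2^{j_{\max}}\}$, and by the transversality case analysis of Proposition~\ref{prop:HighHighHighInteraction} one has $|\nabla_{(\xi_1,\eta_1)}\Omega|=|\nabla\omega(\xi_2,\eta_2)-\nabla\omega(\xi_1,\eta_1)|\sim 2^{2k_1}$ after suitably permuting the three frequencies. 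Thus the near-resonance set is a tube of width $\sim 2^{j_{\max}-2k_1}$ within an annulus of linear extent $\sim 2^{k_1}$, containing $\lesssim 2^{k_1}+2^{j_{\max}-k_1}$ lattice points. Substituting and using $j_i\geq k_i\sim k_1$ to absorb both regimes then gives the claimed bound $2^{-k_1/2}\prod 2^{j_i/2}$. The hard part is precisely this resonance counting: the Hessian of $\Omega$ can degenerate along one-dimensional subfamilies, and one has to argue, as in Proposition~\ref{prop:HighHighHighInteraction}, that in every such regime some other pair of frequencies provides the missing transversality. Finally, (c) follows from iterated Young in mixed-norm spaces, $\|f_1*f_2\|_{L^2}\leq\|f_a\|_{L^1_{\xi,\eta}L^2_\tau}\|f_b\|_{L^2_{\xi,\eta}L^1_\tau}$, combined with the support bounds $\|f_a\|_{L^1_{\xi,\eta}L^2_\tau}\leq 2^{k_a}\|f_a\|_{L^2}$ and $\|f_b\|_{L^2_{\xi,\eta}L^1_\tau}\leq 2^{j_b/2}\|f_b\|_{L^2}$; by the symmetry $\int(f_1*f_2)f_3=\int f_a(f_b*\tilde f_c)$ one may choose $a$ with $k_a=k_{\min}$ and $b$ with $j_b=j_{\min}$, yielding the $2^{k_{\min}}2^{j_{\min}/2}$ factor.
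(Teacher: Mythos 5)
Your proposal is correct and, for part (a), it reproduces the paper's own proof: both pass to $f_i^{\#}$, compute $\partial_{\xi_2}\Omega = 3(\xi_1+\xi_2)^2-3\xi_2^2+(\eta_1+\eta_2)^2-\eta_2^2 \sim 2^{2k_1}$ in the high--low regime, do Cauchy--Schwarz in $\xi_2$ to extract $(1+2^{(j_3-2k_1)/2})$, then Cauchy--Schwarz successively in $(\xi_1,\eta_1,\tau_1)$ and in $(\eta_2,\tau_2)$.

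For (b) and (c), however, your routes differ from the paper's, and in both cases the differences are worth noting. For (b) the paper splits into two explicit regimes: if some $x_1$-frequency is comparable to $2^{k_1}$ it simply invokes the anisotropic estimate \eqref{eq:anisotropicEstimate} and closes using $j_i \geq k_i$; if all $x_1$-frequencies are small it uses $\partial_{\xi_2}\Omega = \eta_1(\eta_1+2\eta_2)+O(\xi)$, a sign argument to show $|\eta_1(\eta_1+2\eta_2)|\sim 2^{2k_1}$, and then the same per-variable Cauchy--Schwarz as in (a). You instead bound $\|f_1*f_2\|_{L^2(\mathrm{supp}\,f_3)}$ by $\sup|\Theta|^{1/2}\,\|f_1\|\|f_2\|$ and count the near-resonance set. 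This does work and your final exponent check is correct ($\max(2^{k_1},2^{j_{\max}-k_1})\min(2^{j_1},2^{j_2}) \leq 2^{-k_1}\prod 2^{j_i}$ under $j_i\geq k_i$), but be aware that the lattice-point count for the thin set $\{|\Omega|\lesssim 2^{j_{\max}}\}$ is only legitimate once you fix, region by region, which single coordinate ($\xi_1$ or $\eta_1$) gives $|\partial\Omega|\gtrsim 2^{2k_1}$; only then is $\Omega$ monotone in a lattice direction and the count $\lesssim 2^{k_1}(1+2^{j_{\max}-2k_1})$ valid. That decomposition, together with the one over which pair of indices realizes the transversality of Proposition~\ref{prop:HighHighHighInteraction}, is precisely what the paper's case analysis makes explicit; you flag it but do not carry it out, and it is the only place where the ``tube counting'' shortcut could slip. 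For (c) the paper is terse (``follows from Cauchy--Schwarz without the resonance function''); your mixed-norm Young bound $\|f_a*f_b\|_{L^2} \leq \|f_a\|_{L^1_{\xi,\eta}L^2_\tau}\|f_b\|_{L^2_{\xi,\eta}L^1_\tau}$ followed by the support bounds $\lesssim 2^{k_a}\|f_a\|_{L^2}$, $\lesssim 2^{j_b/2}\|f_b\|_{L^2}$ and the symmetry $\int(f_1*f_2)f_3 = \int f_c(\tilde f_a*f_b)$ (so $a,b$ can be chosen to realize $k_{\min}$ and $j_{\min}$) is a clean and correct way to fill in that gap.
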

\begin{proof}
(a): For the representation \eqref{eq:anisotropicBilinearReduction} we find
\begin{equation}
\label{eq:transversalityResonance}
\frac{\partial \Omega}{\partial \xi_2} = 3 (\xi_1+\xi_2)^2 + (\eta_1+\eta_2)^2 - 3 \xi_2^2 - \eta_2^2, \quad \left| \frac{\partial \Omega}{\partial \xi_2} \right| \sim 2^{2n_1}
\end{equation}
and an application of the Cauchy-Schwarz inequality in $\xi_2$ yields
\begin{equation*}
\begin{split}
&\lesssim \int d\tau_1 \tau_2 \sum_{\xi_1,\eta_1,\eta_2} f_1^{\#}(\xi_1,\eta_1,\tau_1) (1+2^{\frac{j_3-2n_1}{2}}) \\
&\times \left( \sum_{\xi_2} |f_2^{\#}(\xi_2,\eta_2,\tau_2)|^2 |f_3^{\#}(\xi_1+\xi_2,\eta_1+\eta_2,\tau_1+\tau_2+\Omega)|^2 \right)^{1/2} \\
&\lesssim \Vert f_1^{\#} \Vert_{L^2} \int d\tau_2 \sum_{\eta_2} \left( \sum_{\xi_2} |f_2^{\#}(\xi_2,\eta_2,\tau_2)|^2 \right)^{1/2} (1+2^{\frac{j_3-2n_1}{2}}) \Vert f_3^{\#} \Vert_{L^2} \\
&\lesssim \Vert f_1^{\#} \Vert_{L^2} 2^{j_2/2} 2^{n_2/2} (1+2^{\frac{j_3-2n_1}{2}}) \Vert f_2^{\#} \Vert_{L^2} \Vert f_3^{\#} \Vert_{L^2},
\end{split}
\end{equation*}
where the penultimate estimate follows from applications of Cauchy-Schwarz inequality in $\tau_1,\xi_1$ and $\eta_1$ and the last line from applications in $\eta_2$ and $\tau_2$.\\[0.5cm]
(b): In case there are $x_1$-frequencies comparable to $2^{k_1}$ the estimate \eqref{eq:anisotropicEstimate} is sufficient as the claim follows due to $j_i \geq k_i$.\\
Hence, we suppose next that the $x_1$-frequencies are much smaller than $2^{k_1}$ and the $x_2$-frequencies are comparable to $2^{k_1}$. Then, either $\eta_1$ or $\eta_2$ has the same sign like $\eta_1+\eta_2$. Suppose that it is $\eta_2$. Then, we estimate
\begin{equation*}
\begin{split}
&\int d\tau_1 \int d\tau_2 \sum_{\xi_1,\eta_1,\eta_2} f_1^{\#}(\xi_1,\eta_1,\tau_1) \sum_{\xi_2} f_2^{\#}(\xi_2,\eta_2,\tau_2) f_3^{\#}(\xi_1+\xi_2,\eta_1+\eta_2,\tau_1+\tau_2+\Omega) \\
&\lesssim \int d\tau_1 \int d\tau_2 \sum_{\xi_1,\eta_1,\eta_2} f_1^{\#}(\xi_1,\eta_1,\tau_1) (1+2^{\frac{j_3-2n_1}{2}}) \\
&\times \left( \sum_{\xi_2} |f_2^{\#}(\xi_2,\eta_2,\tau_2)|^2 |f_3^{\#} (\xi_1+\xi_2,\eta_1+\eta_2,\tau_1+\tau_2+\Omega)|^2 \right)^{1/2} \\
&\lesssim (1+2^{\frac{j_3-2n_1}{2}}) \Vert f_1^{\#} \Vert_{L^2} \int d\tau_2 \sum_{\eta_2} \left( |f_2^{\#}(\xi_2,\eta_2,\tau_2)|^2 \right)^{1/2} \Vert f_3^{\#} \Vert_{L^2} \\
&\lesssim \Vert f_1^{\#} \Vert_{L^2} 2^{k_2/2} 2^{j_2/2} \Vert f_2^{\#} \Vert_{L^2} (1+2^{\frac{j_3-2n_1}{2}}) \Vert f_3^{\#} \Vert_{L^2}
\end{split}
\end{equation*}
where in the first estimate we used $\partial \Omega/\partial \xi_2 = \eta_1(\eta_1+2\eta_2)$, $|\partial \Omega / \partial \xi_2| \sim 2^{2n_1}$ and the Cauchy-Schwarz inequality in $\xi_2$ and in the second line Cauchy-Schwarz in $\xi_1,\eta_1,\tau_1$ and finally in $\tau_2$ and $\eta_2$.\\
(c): \eqref{eq:CauchySchwarzLocalizedFunctions} follows from applications of Cauchy-Schwarz inequality without using the resonance function.
\end{proof}
The above estimates in the isotropic case extend to dispersion generalizations by virtue of the transversality considerations from Section \ref{section:BilinearStrichartzEstimates}.
Let 
\begin{equation*}
\begin{split}
\varphi_a(\xi,\eta) &= \xi (\xi^2 + \eta^2)^{a/2} \\
D^a_{k_i,\leq j_i} &= \{ (\xi,\eta,\tau) \in \mathbb{Z}^2 \times \R | |(\xi,\eta)| \sim 2^{k_i}, |\tau - \varphi_a(\xi,\eta)| \lesssim 2^{j_i} \}
\end{split}
\end{equation*}
The considerations from Section \ref{section:BilinearStrichartzEstimates} can be utilized in the following way:\\
Consider $u_i \in L^2(\T^2 \times \R)$, real-valued, with $f_i = \mathcal{F}_{t,x}[u_i]$, $supp(f_i) \subseteq D^a_{k_i,\leq j_i}$ and moreover,
\begin{equation*}
|\nabla \varphi_a(\xi_2) - \nabla \varphi_a(\xi_3)| \gtrsim V \quad \xi_i \in supp(f_i)
\end{equation*}
Suppose that $|\partial_{\xi} \varphi_a(\xi_1,\eta_1) - \partial_{\xi} \varphi_a(\xi_2,\eta_2)| \gtrsim V $. In case another partial derivative dominates the conclusion follows likewise.\\
It follows that
\begin{equation*}
\begin{split}
\int dt \int dx u_1 u_2 u_3 &= \int dt \int dx u_1 u_2 \overline{u_3} = \int d\tau \sum_{\xi} (f_1 * f_2)(\xi,\tau) \overline{f_3(\tau,\xi)} \\
&= \int d\tau_1 \int d\tau_2 \sum_{\xi_1,\xi_2} f_1(\xi_1,\tau_1) f_2(\xi_2,\tau_2) \tilde{f}_3(\xi_1+\xi_2,\eta_1+\eta_2,\tau_1+\tau_2) \\
&= \int d\tau_1 \int d\tau_2 \sum_{\xi_1,\xi_2} f_1^{\#}(\xi_1,\tau_1-\omega(\xi_1)) f_2^{\#}(\xi_2,\tau_2) \\
&\times \tilde{f}_3(\xi_1+\xi_2,\tau_1+\tau_2+\omega(\xi_1)+\omega(\xi_2)-\omega(\xi_1+\xi_2))
\end{split}
\end{equation*}
Next, use Cauchy-Schwarz inequality in $\xi_2$ which will give a factor $1+(2^{j_3}/V)^{1/2}$ and following along the above lines we find
\begin{equation}
\label{eq:periodicTransversality}
\left| \int dt \int_{\T^2} dx u_1 u_2 u_3 \right| \lesssim 2^{k_2/2} 2^{j_2/2} \Vert u_1 \Vert_{L^2} \Vert u_2 \Vert_{L^2} (1+(2^{j_3}/V)^{1/2}) \Vert u_3 \Vert_{L^2}
\end{equation}
We record the following isotropic estimates for dispersion relations of two dimensional generalized Benjamin-Ono equations.
\begin{lemma}
Let $k_i,j_i \in \mathbb{N}$, $i \in \{1,2,3\}$, $f_i: \mathbb{Z}^2 \times \R \rightarrow \R_+$, $f_i \in L^2$, $supp(f_i) \subseteq D^a_{k_i,\leq j_i}$ for $i \in \{1,2,3\}$.\\
\begin{enumerate}
\item[(a)] If $|k_1-k_3| \leq 5$, $k_2 \leq k_1-10$ and $j_3 \geq k_3$, then
\begin{equation}
\label{eq:PeriodicHDBOHighLowHigh}
\int_{\Z^2 \times \R} (f_1 * f_2) f_3 \lesssim \Vert f_1 \Vert_{L^2} 2^{j_2/2} 2^{k_2/2} \Vert f_2 \Vert_{L^2} 2^{\frac{j_3-k_3}{2}} \Vert f_3 \Vert_{L^2}
\end{equation}
\item[(b)] If $|k_1-k_2| \leq 5$, $|k_2-k_3| \leq 5$ and $ j_i \geq k_i$, then
\begin{equation}
\label{eq:PeriodicHDBOHighHighHigh}
\int_{\Z^2 \times \R} (f_1 * f_2) f_3 \lesssim 2^{-k_1/2} \prod_{i=1}^3 2^{j_i/2} \Vert f_i \Vert_{L^2}
\end{equation}
\end{enumerate}
\end{lemma}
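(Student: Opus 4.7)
The plan is to derive both estimates from the general periodic transversality bound \eqref{eq:periodicTransversality}, once the appropriate group-velocity separation $V$ is identified in each geometric configuration. Since $\varphi_a(\xi,\eta) = \xi(\xi^2+\eta^2)^{a/2}$, I would first record
\begin{equation*}
\partial_\xi \varphi_a(\xi,\eta) = (\xi^2+\eta^2)^{a/2-1}\bigl((1+a)\xi^2+\eta^2\bigr) \geq |(\xi,\eta)|^a, \qquad |\partial_\eta \varphi_a(\xi,\eta)| \lesssim |(\xi,\eta)|^a,
\end{equation*}
which makes the required lower bounds on $V$ transparent.

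For part (a), the scale separation $k_2 \leq k_1 - 10 \sim k_3$ forces $\partial_\xi \varphi_a(\xi_3,\eta_3) \geq 2^{ak_3}$ for $|(\xi_3,\eta_3)|\sim 2^{k_3}$, while the low-frequency gradient is only $\lesssim 2^{ak_2} \ll 2^{ak_3}$, yielding $V \gtrsim 2^{ak_1}$. Substituting into \eqref{eq:periodicTransversality} produces the prefactor $2^{k_2/2}\, 2^{j_2/2}\bigl(1 + 2^{(j_3-ak_1)/2}\bigr)$; because $a \geq 1$, $k_3 \sim k_1$, and $j_3 \geq k_3$, the bracketed quantity is controlled by $2^{(j_3-k_3)/2}$, giving \eqref{eq:PeriodicHDBOHighLowHigh}.

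Part (b) is the more delicate case, since the three frequencies are comparable and no scale separation is available. Here I would invoke Proposition \ref{prop:HighHighHighInteraction} to obtain a pair $i \neq j \in \{1,2,3\}$ for which $|\nabla \varphi_a(\xi_i) - \nabla \varphi_a(\xi_j)| \gtrsim 2^{ak_1}$, relabel so that these indices occupy positions $2$ and $3$ in \eqref{eq:periodicTransversality}, and apply the bound with $V \sim 2^{ak_1}$. The resulting prefactor $2^{k_1/2}\, 2^{j_2/2}\bigl(1 + 2^{(j_3-ak_1)/2}\bigr)$ must then be compared with the target $2^{-k_1/2}\, 2^{(j_1+j_2+j_3)/2}$: the "$1$" contribution reduces to $j_1 + j_3 \geq 2k_1$, and the $2^{(j_3-ak_1)/2}$ contribution reduces to $(2-a)k_1 \leq j_1$, both immediate consequences of $j_i \geq k_i \sim k_1$ and $1 \leq a \leq 2$.

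The only nontrivial input is the planar High$\times$High$\times$High transversality of Proposition \ref{prop:HighHighHighInteraction}, already established earlier in Section \ref{section:BilinearStrichartzEstimates}; granted that, both parts reduce to exponent bookkeeping fully parallel to the Zakharov--Kuznetsov case handled in \eqref{eq:IsotropicEstimateZKHighLow}--\eqref{eq:IsotropicEstimateZKHighHighHigh}.
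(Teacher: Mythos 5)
Your proposal is correct and follows essentially the same route as the paper: both parts are deduced from the periodic transversality bound \eqref{eq:periodicTransversality}, with the scale-separated gradient bound supplying $V\sim 2^{ak_1}$ in (a) and Proposition \ref{prop:HighHighHighInteraction} together with a relabeling of $f_1,f_2,f_3$ supplying it in (b). The paper states this very tersely; your exponent bookkeeping (including the reductions $j_1+j_3\geq 2k_1$ and $(2-a)k_1\leq j_1$) is exactly the omitted verification.
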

\begin{proof}
The proofs of (a) and (b) are consequences of \eqref{eq:periodicTransversality}. For (a) observe that $|\nabla \varphi_a(\xi_2) - \nabla \varphi_a(\xi_3)| \sim 2^{a k_1}$ and for (b) use the symmetry in $f_1$, $f_2$ and $f_3$ and Proposition \ref{prop:HighHighHighInteraction}.
\end{proof}
Recall that in higher dimensions we have the following transversality in case of separated frequencies: Let
\begin{equation}
\label{eq:dispersionRelation}
\varphi_a(\xi) = \xi_1 |\xi|^{a}, \quad \xi \in \R^n
\end{equation}
Suppose that $k_1 \leq k_2 - 10$, $\xi_i \in A_{k_i}$ for $i=1,2$. Then
\begin{equation}
\label{eq:TransversalityHigherDimensions}
| \partial_1 \varphi_a(\xi_1) - \partial_1 \varphi_a(\xi_2) | \gtrsim 2^{ak_1}
\end{equation}
For the Zakharov-Kuznetsov dispersion, i.e., $a=2$ we can also prove a certain transversality for frequencies of comparable size:
\begin{lemma}
\label{lem:ZKTransversalityPeriodicCase}
Let $\xi_i \in \Z^n$, $i=1,2,3$ and $n \geq 3$.\\
Assume $\xi_1 + \xi_2 + \xi_3 = 0$ and $|\xi_i| \sim 2^k$ and $1 \leq |\xi_{i1}| \ll 2^k$ for $i=1,2,3$. Then there are $i,j \in \{1,2,3\}$ such that
\begin{equation}
\label{eq:ZKTransversality}
|\nabla \varphi_2(\xi_i) - \nabla \varphi_2(\xi_j)| \gtrsim 2^k
\end{equation}
\end{lemma}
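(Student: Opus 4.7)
The plan is to prove transversality from the transverse components of $\nabla\varphi_2$ alone, using the integer/sum-to-zero structure of the first coordinates. Split $\xi_i = (\alpha_i,\beta_i) \in \Z \times \Z^{n-1}$. The hypotheses give $|\alpha_i| \geq 1$, $|\alpha_i| \ll 2^k$, and $|\beta_i|^2 = |\xi_i|^2 - \alpha_i^2 \sim 2^{2k}$, i.e. $|\beta_i| \sim 2^k$; moreover $\sum_i \alpha_i = 0$ and $\sum_i \beta_i = 0$. Since $\varphi_2(\xi) = \xi_1|\xi|^2$, one has $\nabla\varphi_2(\xi) = (|\xi|^2 + 2\xi_1^2,\, 2\xi_1\xi_2,\ldots,2\xi_1\xi_n)$, so the transverse (last $n-1$) part of $\nabla\varphi_2(\xi_i)$ equals $2\alpha_i\beta_i$. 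It therefore suffices to exhibit a pair $(i,j)$ with $|\alpha_i\beta_i - \alpha_j\beta_j| \gtrsim 2^k$; the first component of $\nabla\varphi_2$ never needs to enter.

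I argue by contradiction: assume $|\alpha_i\beta_i - \alpha_j\beta_j| \leq \delta\cdot 2^k$ for every pair, with $\delta$ a small universal constant to be fixed at the end. Set $v := \alpha_1\beta_1$ and $e_i := \alpha_i\beta_i - v$, so that $e_1 = 0$ and $|e_i| \leq \delta\cdot 2^k$. Since each $\alpha_i \neq 0$, solving $\beta_i = (v+e_i)/\alpha_i$ and summing over $i$ yields
\[
v \sum_{i=1}^{3}\frac{1}{\alpha_i} \;=\; -\frac{e_2}{\alpha_2} - \frac{e_3}{\alpha_3}.
\]
Using $|\alpha_i|\geq 1$ on the right and $|v| = |\alpha_1||\beta_1| \gtrsim |\alpha_1|\,2^k$ on the left gives
\[
|\alpha_1|\,\Bigl|\sum_{i=1}^{3}\tfrac{1}{\alpha_i}\Bigr| \;\lesssim\; \delta.
\]

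Now I close with the integer/sum-to-zero structure of $(\alpha_i)$. Squaring $\sum_i\alpha_i = 0$ gives $\alpha_1^2+\alpha_2^2+\alpha_3^2 = -2(\alpha_1\alpha_2+\alpha_2\alpha_3+\alpha_3\alpha_1)$, hence
\[
\sum_{i=1}^{3}\frac{1}{\alpha_i} \;=\; \frac{\alpha_2\alpha_3+\alpha_1\alpha_3+\alpha_1\alpha_2}{\alpha_1\alpha_2\alpha_3} \;=\; -\,\frac{\alpha_1^2+\alpha_2^2+\alpha_3^2}{2\,\alpha_1\alpha_2\alpha_3},
\]
and therefore
\[
|\alpha_1|\,\Bigl|\sum_{i=1}^{3}\tfrac{1}{\alpha_i}\Bigr| \;=\; \frac{\alpha_1^2+\alpha_2^2+\alpha_3^2}{2|\alpha_2\alpha_3|} \;\geq\; \frac{\alpha_2^2+\alpha_3^2}{2|\alpha_2\alpha_3|} \;\geq\; 1,
\]
by AM--GM applied to the nonzero integers $\alpha_2,\alpha_3$. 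Choosing $\delta$ smaller than the implicit constant in the previous display yields a contradiction, so \eqref{eq:ZKTransversality} must hold for some pair $i\neq j$. The main obstacle, and the only genuinely delicate step, is recognizing that the integer/nonvanishing constraint on the $\alpha_i$ (supplied by $1 \leq |\xi_{i1}|$) together with the size $|\beta_i|\sim 2^k$ (supplied by $|\xi_{i1}|\ll 2^k$) is exactly what produces the universal lower bound via AM--GM; relaxing either hypothesis makes the configuration $v_1\approx v_2 \approx v_3$ actually achievable, so the argument breaks.
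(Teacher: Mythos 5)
Your proof is correct and takes a genuinely different route from the paper's. The paper argues by a combinatorial case split: after fixing a coordinate, say the second, in which at least one frequency is large, it distinguishes whether the third frequency also has a large second component, and in each case tracks the signs of the products $\xi_{i1}\xi_{i2}$ to pick a pair $(i,j)$ for which the two terms in $\partial_2\varphi_2(\xi_i)-\partial_2\varphi_2(\xi_j)$ reinforce rather than cancel, the dominant product being $\gtrsim 2^k$. Your approach is purely algebraic: after reducing to the transverse part $2\alpha_i\beta_i$ of the gradient, you convert the assumption that all three $\alpha_i\beta_i$ nearly coincide into the scalar inequality $|\alpha_1|\bigl|\sum_i 1/\alpha_i\bigr|\lesssim\delta$, and then use $\sum_i\alpha_i=0$ to rewrite $\sum_i 1/\alpha_i=-\tfrac{\alpha_1^2+\alpha_2^2+\alpha_3^2}{2\alpha_1\alpha_2\alpha_3}$, after which AM--GM gives the universal lower bound $|\alpha_1|\bigl|\sum_i 1/\alpha_i\bigr|\geq 1$. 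Both arguments rest on the same two facts --- the first components are nonzero integers, hence $|\alpha_i|\geq 1$, and $|\beta_i|\sim 2^k$ --- but yours packages them with no case analysis, works simultaneously in all transverse directions rather than a chosen coordinate, and produces a clean quantitative constant; the paper's more hands-on version mirrors the sign-and-size bookkeeping it had already set up for the two-dimensional fractional case in Proposition~\ref{prop:HighHighHighInteraction}, which is presumably why it was written that way.
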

\begin{proof}
There must be one coordinate of size $2^k$, say $|\xi_{12}| \sim 2^k$. By this choice we use the symmetry of $\varphi_2$ in $\xi_2, \ldots , \xi_n$.\\
By convolution constraint there is another frequency with second component of size $2^k$, that is $|\xi_{22}| \sim 2^k$.\\
Suppose that $|\xi_{32}| \ll 2^k$.\\
In this case by convolution constraint we find $|\xi_{i1}| \gtrsim |\xi_{31}|$ for an $ i \in \{1,2\}$ and easily
\begin{equation*}
|\partial_2 \varphi_2(\xi_i) - \partial_2 \varphi_2(\xi_3)| = 2 | \xi_{i1} \xi_{i2} - \xi_{31} \xi_{32}| \gtrsim |\xi_{i1} \xi_{i2}| \sim 2^k
\end{equation*}
Next, suppose that $|\xi_{32}| \sim 2^k$. Here, we argue with the signs of the involved frequencies.\\
Suppose that $sgn(\xi_{12}) = sgn(\xi_{22}) = 1$, $sgn(\xi_{32}) = -1$.\\
In case $sgn(\xi_{11}) \neq sgn(\xi_{21})$ we have
\begin{equation*}
\begin{split}
|\partial_2 \varphi_2(\xi_1) - \partial_2 \varphi_2(\xi_2)| &= 2| \xi_{11} \xi_{12} - \xi_{21} \xi_{22}| \gtrsim |\xi_{11} \xi_{12}| \\
&\gtrsim 2^k
\end{split}
\end{equation*}
Thus, suppose that $sgn(\xi_{11}) = sgn(\xi_{21}) = 1$ or $sgn(\xi_{11}) = sgn(\xi_{21}) = -1$.\\
Then,
\begin{equation*}
\begin{split}
|\partial_2 \varphi_2(\xi_1) - \partial_2 \varphi_2(\xi_3)| &= 2|\xi_{11} \xi_{12} - \xi_{31} \xi_{32}| \\
&= 2|\xi_{11} \xi_{12} - (\xi_{11} + \xi_{21})(\xi_{12}+\xi_{22})| \\
&= 2|\xi_{21} \xi_{12} + \xi_{11} \xi_{22} + \xi_{21} \xi_{22}| \gtrsim |\xi_{21}| |\xi_{12}| \gtrsim 2^k
\end{split}
\end{equation*}
because all terms have the same sign in the above sum.
\end{proof}
This argument is insufficient to prove transversality for the higher dimensional fractional Zakharov-Kuznetsov dispersion relations in case of comparable frequencies.\\
Thus, we have no non-trivial estimates for the $High \times High \rightarrow High$-interaction for fractional Zakharov-Kuznetsov equations in higher dimensions. It might be possible though to prove an $L^4_{t,x}$-Strichartz estimate by decoupling (cf. \cite{BourgainDemeter2015,BourgainDemeter2017GeneralDecoupling}) adapting the argument from \cite{rsc2019StrichartzEstimatesDecoupling}.
\subsection{Nonlinear estimates}
\begin{proposition}
\label{prop:nonlinearEstimateZK2d}
Let $T \in (0,T_0]$. We find the following estimates to hold
\begin{align}
\label{eq:nonlinearEstimateZKI}
\Vert \partial_{x_1} (u v) \Vert_{N_x^{s^\prime}(T)} \lesssim T_0^{1/4} \Vert u \Vert_{F^s_x(T)} \Vert v \Vert_{F^{s^\prime}_x(T)} \\
\label{eq:nonlinearEstimateZKII}
\Vert \partial_{x_1} (u v) \Vert_{N_x^0(T)} \lesssim T_0^{1/4} \Vert u \Vert_{F^0_x(T)} \Vert v \Vert_{F^s_x(T)}
\end{align}
provided that $1 < s \leq s^\prime$. Moreover, estimates \eqref{eq:nonlinearEstimateZKI} and \eqref{eq:nonlinearEstimateZKII} also hold true, when replacing $N_x$ and $F_x$ by $N$ and $F$, respectively.
\end{proposition}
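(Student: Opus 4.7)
The plan is to apply Littlewood--Paley decomposition and reduce to estimating the frequency-localized pieces $P_{k_3,x}\partial_{x_1}(P_{k_1,x}u \cdot P_{k_2,x}v)$ in $N_{k_3,x}(T)$. By standard frequency considerations, only three types of interaction contribute: (a) High$\times$Low$\to$High with $|k_1-k_3|\le 5$, $k_2 \le k_1-10$; (b) its mirror image $k_1\leftrightarrow k_2$; (c) High$\times$High$\to\,\cdot\,$ with $|k_1-k_2|\le 5$, $k_3 \lesssim k_2$. For each case I fix a smooth time cutoff $\eta_0(2^{k_3}(t-t_0))$ adapted to the output scale, use the definition of $N_{k_3,x}$, dualize against a function of unit $X_{k_3}$-dual norm, extend the inputs from $[-T,T]$ to $\mathbb{R}$, and decompose each factor on the space-time Fourier side into modulation dyadic pieces $f_i$ supported in $D^x_{k_i,\le j_i}$ with $j_i\ge k_3$. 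This reduces the required bound to the convolution integral $\int_{\mathbb{Z}^2\times \mathbb{R}}(f_1\ast f_2)f_3$ controlled by Lemma~\ref{lem:anisotropicEstimate} (anisotropic norms) and by its isotropic analogs (for the $F,N$-version).

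In cases (a) and (b) I apply \eqref{eq:anisotropicEstimate} with the low-frequency factor in the $f_2$-slot: the derivative loss $2^{k_3}$ from $\partial_{x_1}$ is compensated by the $N_{k_3,x}$-weight $2^{-k_3}$, and \eqref{eq:anisotropicEstimate} produces a gain $2^{k_2/2}(1+2^{(j_3-k_1)/4})$. Summing over modulations $j_i \ge k_3$ against the $X_{k_i}$-factors $2^{j_i/2}$ absorbs the modulation losses and leaves a geometric factor in $2^{-(k_1-k_2)}$ which, combined with the regularity weight $2^{sk_2}$, is summable since $s>1$. In case (c) the key observation is that $|\partial^2\Omega/\partial\eta_2^2|=|\xi_1|\sim 2^{k_1}$ holds on the support irrespective of the size of $k_3$, so \eqref{eq:anisotropicEstimate} applies in exactly the same way, pairing the two inputs and placing the dualizing function as $f_3$.

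The isotropic version of the proposition is handled identically, with \eqref{eq:anisotropicEstimate} replaced by \eqref{eq:IsotropicEstimateZKHighLow} in cases (a), (b) and by \eqref{eq:IsotropicEstimateZKHighHighHigh} in the High$\times$High$\times$High subcase of (c). The High$\times$High$\times$High case is the main technical obstacle, because no single partial Fourier direction is transverse, and one must rely on the geometric transversality of the full group velocities recorded in Proposition \ref{prop:HighHighHighInteraction} and Lemma \ref{lem:ZKTransversalityPeriodicCase}; the resulting $2^{-k_1/2}$ gain in \eqref{eq:IsotropicEstimateZKHighHighHigh}, distributed against the $\partial_{x_1}$ loss and the modulation costs, closes the estimate for $s>1$. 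Finally, the factor $T_0^{1/4}$ is obtained by inserting the global time cutoff $\eta_0(T_0^{-1}t)$ on the dualizing function: its space-time Fourier transform is concentrated at modulations $\lesssim T_0^{-1}\sim 2^{k_0}$, so enforcing it within $X_{k_3}$ at the lowest modulation scale costs only $T_0^{1/2}$, and distributing this cost symmetrically across the trilinear pairing by Cauchy--Schwarz yields $T_0^{1/4}$. Summation over $k_1,k_2$ at fixed $k_3$ then follows from the gained geometric factors and $s>1$, and the bound with $F,N$ replacing $F_x,N_x$ proceeds line-by-line with the isotropic convolution estimates in place of the anisotropic one.
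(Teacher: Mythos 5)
Your overall strategy matches the paper's: Littlewood--Paley decomposition, reduction to the three interaction types, dualization in the $X_k$-framework, modulation-dyadic decomposition, and invocation of the bilinear convolution estimates \eqref{eq:anisotropicEstimate}, \eqref{eq:IsotropicEstimateZKHighLow}, \eqref{eq:IsotropicEstimateZKHighHighHigh}. However, there are two genuine gaps.

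First, your treatment of case (c) breaks down precisely when $k_3 \ll k_1 \sim k_2$, i.e.\ in the $High \times High \to Low$ interaction. You fix the time cutoff $\eta_0(2^{k_3}(t-t_0))$ adapted to the \emph{output} scale. But $u_{k_1}$ and $v_{k_2}$ are only controlled in $F_{k_1,x}$ and $F_{k_2,x}$, which measure pieces localized at the much finer time scale $2^{-k_1}$. On an interval of length $2^{-k_3}$ you cannot directly invoke these norms, nor do you obtain the crucial lower bound $j_i \ge k_1$ on the input modulations needed to make the convolution estimate useful. The paper's proof adds a \emph{further} partition of the time interval into $\sim 2^{k_1-k_3}$ subintervals of length $2^{-k_1}$ (via a partition of unity $\sum_n \gamma^2(2^{k_1}t - n) = 1$), which exposes the $F_{k_1,x}$-structure of the inputs, and then splits the $j$-sum into the ranges $k_3 \le j \le 2k_1$ and $j > 2k_1$, treating each with a differently balanced application of \eqref{eq:anisotropicEstimate}. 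Your claim that ``\eqref{eq:anisotropicEstimate} applies in exactly the same way'' in this subcase omits this step, and without it the $j$-summation does not close.

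Second, your mechanism for producing the prefactor $T_0^{1/4}$ is not correct. Inserting a global cutoff $\eta_0(T_0^{-1}t)$ on the dualizing function and noting its Fourier concentration at modulations $\lesssim T_0^{-1}$ does not by itself ``cost $T_0^{1/2}$'' in $X_{k_3}$: multiplication by a bump of unit height is bounded on $X_{k}$, it does not gain a smallness factor. The $T_0^{1/4}$ factor in \eqref{eq:nonlinearEstimateZKI}--\eqref{eq:nonlinearEstimateZKII} actually comes from the structure of the weights $(2^{2sk}+2^{2sk_0})$ in the definitions of $F^s_x(T)$ and $N^s_x(T)$ combined with the frequency-localized gain: the paper records the intermediate bound $\Vert P_{k,x}\partial_{x_1}(u_{k_1}v_{k_2})\Vert_{N_{k,x}}\lesssim 2^{-k_1/4}\,2^{k_1}\Vert u_{k_1}\Vert_{F_{k_1,x}}\Vert v_{k_2}\Vert_{F_{k_2,x}}$, and since every effective frequency is at least $2^{k_0}\sim T_0^{-1}$ after accounting for the floor in the weights, the factor $2^{-k_1/4}$ (and its analogue in the low-output case) converts into $T_0^{1/4}$ upon summation. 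You should replace the time-cutoff argument by this bookkeeping.
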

\begin{remark}
The argument below yields nonlinear estimates up to $H^{1/2}(\T^2)$. The regularity threshold $s>3/2$ comes from carrying out energy estimates.
\end{remark}
\begin{proof}
We prove the estimates in case of anisotropic frequency localization first. Choose $\tilde{u}, \tilde{v} \in C(\R, H^{3,0})$ such that
\begin{equation*}
\Vert P_{k,x} \tilde{u} \Vert_{F_{k,x}} \leq 2 \Vert P_{k,x} u \Vert_{F_{k,x}(T)} \text{ and } \Vert P_{k,x} \tilde{v} \Vert_{F_{k,x}} \leq 2 \Vert P_{k,x} v \Vert_{F_{k,x}(T)}
\end{equation*}
for $k \in \mathbb{N}$. Set $u_k = P_{k,x} \tilde{u} $ and $v_k = P_{k,x} \tilde{v}$. Then it suffices to consider the interactions
$High \times Low \rightarrow High$:
\begin{equation}
\label{eq:ShorttimeNonlinearZKHighLowHighInteraction}
\Vert P_{k,x} (\partial_{x_1} (u_{k_1} v_{k_2} ) \Vert_{N_{k,x}} \lesssim 2^{k_2/2} \Vert u_{k_1} \Vert_{F_{k_1,x}} \Vert v_{k_2} \Vert_{F_{k_2,x}} \quad (|k_1-k| \leq 5, k_2 \leq k-10) 
\end{equation}
$High \times High \rightarrow High$:
\begin{equation}
\label{eq:ShorttimeNonlinearZKHighHighHighInteraction}
\Vert P_{k,x} \partial_{x_1} (u_{k_1} v_{k_2}) \Vert_{N_{k,x}} \lesssim 2^{k/2} \Vert u_{k_1} \Vert_{F_{k_1,x}} \Vert v_{k_2} \Vert_{F_{k_2,x}} \quad (|k_1-k_2| \leq 5, |k_2-k_3| \leq 5)
\end{equation}
$High \times High \rightarrow Low$:
\begin{equation}
\label{eq:ShorttimeNonlinearZKHighHighLowInteraction}
\Vert P_{k,x} \partial_{x_1} (u_{k_1} v_{k_2}) \Vert_{N_{k,x}} \lesssim 2^{(k_1/2)+} \Vert u_{k_1} \Vert_{F_{k_1,x}} \Vert v_{k_2} \Vert_{F_{k_2,x}}
\end{equation}
In fact, the above estimates imply in case of $High \times Low \rightarrow High$- and $High \times High \rightarrow High$-interaction
\begin{equation*}
\Vert P_{k,x} \partial_{x_1} (u_{k_1} v_{k_2}) \Vert_{N_{k,x}} \lesssim 2^{-k_1/4} 2^{k_1} \Vert u_{k_1} \Vert_{F_{k_1,x}} \Vert v_{k_2} \Vert_{F_{k_2,x}}
\end{equation*}
and regarding $High \times High \rightarrow Low$-interaction
\begin{equation*}
(2^k + 2^{k_0}) \Vert P_{k,x} \partial_{x_1} (u_{k_1} v_{k_2}) \Vert_{N_{k,x}} \lesssim 2^{-k_1/4} (2^{k_1}+2^{k_0}) \Vert u_{k_1} \Vert_{F_{k_1,x}} 2^{k_2} \Vert v_{k_2} \Vert_{F_{k_2,x}}
\end{equation*}
Then the claim follows from the definition of the function spaces by summing over the frequencies.\\
We start with $High \times Low \rightarrow High$-interaction. By the definition of $N_{k,x}$ and $F_{k,x}$-spaces it suffices to show the estimate
\begin{equation}
\label{eq:ZKHighLowHighInteractionReduction}
2^k \sum_{j \geq k} 2^{-j/2} \Vert 1_{D^x_{k, \leq j}} (f_1 * f_2) \Vert_{L^2} \lesssim 2^{k_2/2} \prod_{i=1}^2 2^{j_i/2} \Vert f_i \Vert_{L^2}
\end{equation}
Here,
\begin{equation*}
f_i(\xi,\eta,\tau) = \begin{cases}
&\eta_{j_i}(\tau - \omega(\xi,\eta)) \mathcal{F}_{t,x}[u_i], \quad j_i > k \\
&\eta_{\leq j}(\tau - \omega(\xi,\eta)) \mathcal{F}_{t,x}[u_i], \quad j_i = k \end{cases}
\end{equation*}
To prove \eqref{eq:ZKHighLowHighInteractionReduction} we use duality to write
\begin{equation}
\label{eq:ShorttimeHighLowHighZKComputation}
\begin{split}
\Vert 1_{D^x_{k, \leq j}} (f_1 * f_2) \Vert_{L^2} &= \sup_{\Vert g_{k,j} \Vert_{L^2} = 1} \int_{\Z^2 \times \R} g_{k,j} (f_1 * f_2) \\
&\lesssim \sup_{\Vert g_{k,j} \Vert_{L^2} = 1} \Vert g_{k,j} \Vert_{L^2} 2^{k_2/2} 2^{j_2/2} \Vert f_2 \Vert_{L^2} 2^{\frac{j_1-k_1}{4}} \Vert f_1 \Vert_{L^2}\\
&\lesssim 2^{k_2/2} 2^{-k/2} \prod_{i=1}^2 2^{j_i/2} \Vert f_i \Vert_{L^2}
\end{split}
\end{equation}
where estimate \eqref{eq:anisotropicEstimate} was applied in the first step and the conclusion is due to $j_i \geq k$. Plugging \eqref{eq:ShorttimeHighLowHighZKComputation} into \eqref{eq:ZKHighLowHighInteractionReduction} yields \eqref{eq:ShorttimeNonlinearZKHighLowHighInteraction}.\\
The $High \times High \rightarrow High$-interaction is handled along the same lines.\\
In the case of $High \times High \rightarrow Low$-interaction we add further localization in time to length of $2^{-k_1}$ to estimate the resulting functions in $F_{k,x}$-spaces. Let $\gamma : \R \rightarrow [0,1]$ such that $\sum_{n \in \Z} \gamma^2(x-n) = 1 \quad \forall x \in \R$ and suppose that $k_1 \geq k_2$.\\
Then the lhs of \eqref{eq:ShorttimeNonlinearZKHighHighLowInteraction} is dominated by
\begin{equation*}
\begin{split}
&\sup_{t_k} \Vert (\tau - \omega(\xi,\eta) + i2^k)^{-1} 2^k 1_{I_k}(\xi) \sum_{|m| \leq C 2^{k_2-k}} \mathcal{F}[u_{k_1} \eta_0(2^{k}(t-t_k)) 
\gamma (2^{k_1}(t-t_k)-n) \\
&* \mathcal{F}[ v_{k_2} \eta_0(2^k(t-t_k)) \gamma( 2^{k_2}(t-t_k) - n))] \Vert_{X_k}
\end{split}
\end{equation*}
Thus, like above from the properties of the shorttime function spaces it suffices to prove
\begin{equation}
\label{eq:ZKHighHighLowInteractionReduction}
2^{k_1} \sum_{j \geq k} 2^{-j/2} \Vert 1_{D^x_{k,\leq j}} (f_1 * f_2) \Vert_{L^2} \lesssim 2^{(k_1/2)+} \prod_{i=1}^2 2^{j_i/2} \Vert f_i \Vert_{L^2},
\end{equation}
where $f_i: \Z^2 \times \R \rightarrow \R_+$ supported in $D_{k_i,\leq j_i}, j_i \geq k_1$, $i=1,2$.\\
The sum over $j$ we split into $k \leq j \leq 2k_1$ and $j > k_1$. For the first part we use duality and estimate \eqref{eq:anisotropicEstimate} like above to find
\begin{equation*}
\begin{split}
2^{k_1} \sum_{k \leq j \leq 2k_1} 2^{-j/2} \Vert 1_{D^x_{k, \leq j}} (f_1 * f_2) \Vert_{L^2} &\lesssim 2^{k_1} \sum_{k \leq j \leq 2k_1} 2^{-j/2} 2^{j/2} 2^{k/2} 2^{-k_1} \prod_{i=1}^2 2^{j_i/2} \Vert f_i \Vert_{L^2} \\
&\lesssim 2^{k/2} (2k_1-k) \prod_{i=1}^2 2^{j_i/2} \Vert f_i \Vert_{L^2}
\end{split}
\end{equation*}
In the second case we apply duality and estimate \eqref{eq:anisotropicEstimate} in another way to find
\begin{equation*}
\begin{split}
&2^{k_1} \sum_{j \geq 2k_1} 2^{-j/2} 2^{k_1/2} 2^{j_1/2} 2^{\frac{j_2-k}{4}} \Vert f_1 \Vert_{L^2} \Vert f_2 \Vert_{L^2} \\
&\lesssim 2^{\frac{k_1-k}{4}} \prod_{i=1}^2 2^{j_i/2} \Vert f_i \Vert_{L^2}, 
\end{split}
\end{equation*}
which is more than enough.\\
We turn to the estimates in the case of isotropic frequency localization. Again, we have to analyze the interactions from above. The pendant of \eqref{eq:ShorttimeNonlinearZKHighLowHighInteraction} reduces to
\begin{equation*}
2^k \sum_{j \geq k} 2^{-j/2} \Vert 1_{D_{k,\leq j}} (f_1 * f_2) \Vert_{L^2} \lesssim 2^{k_2/2} \prod_{i=1}^2 2^{j_i/2} \Vert f_i \Vert_{L^2},
\end{equation*}
where $supp(f_i) \subseteq D_{k_i,j_i}$, $j_i \geq k$.\\
To prove the above display use duality and apply estimate \eqref{eq:IsotropicEstimateZKHighLow} to find
\begin{equation*}
\Vert 1_{D_{k,\leq j}} (f_1 * f_2) \Vert_{L^2} = \sup_{\Vert g_{k,j} \Vert_{L^2} = 1} \int_{\Z^2 \times \R} g_{k,j} (f_1 * f_2) \lesssim 2^{k_2/2} 2^{-k/2} \prod_{i=1}^2 2^{j_i/2} \Vert f_{i} \Vert_{L^2}
\end{equation*}
For the $High \times High \rightarrow High$-interaction we split the sum over the output modulation variable into $n \leq j \leq 2n$ and $j \geq 2n$ to find
\begin{equation*}
\begin{split}
2^n \sum_{k \leq j \leq 2k} 2^{-j/2} \Vert 1_{D_{k,\leq j}} (f_1 * f_2) \Vert_{L^2} &\lesssim 2^n \sum_{k \leq j \leq 2k} 2^{-j/2} 2^{j/2} 2^{-k/2} \prod_{i=1^2} 2^{j_i/2} \Vert f_i \Vert_{L^2} \\
&\lesssim 2^{(n/2)+} \prod_{i=1}^2 2^{j_i/2} \Vert f_i \Vert_{L^2}
\end{split}
\end{equation*}
after applying duality and estimate \eqref{eq:IsotropicEstimateZKHighHighHigh}.\\
For the high modulation output apply duality and estimate \eqref{eq:CauchySchwarzLocalizedFunctions} to find
\begin{equation*}
\begin{split}
2^k \sum_{j \geq 2k} 2^{-j/2} \Vert 1_{D_{k,\leq j}} (f_1 * f_2) \Vert_{L^2} &\lesssim 2^n \sum_{j \geq 2n} 2^{-j/2} 2^{n/2} \prod_{i=1}^2 2^{j_i/2} \Vert f_i \Vert_{L^2} \\
&\lesssim 2^{n/2} \prod_{i=1}^2 2^{j_i/2} \Vert f_i \Vert_{L^2}
\end{split}
\end{equation*}
For $High \times High \rightarrow Low$-interaction we argue similarly: Taking into account the additional time localization it suffices to prove
\begin{equation}
2^{k_1} \sum_{j \geq k} 2^{-j/2} \Vert 1_{D_{k, \leq j}} (f_1 * f_2) \Vert_{L^2} \lesssim 2^{k_1/2} \prod_{i=1}^2 2^{j_i/2} \Vert f_i \Vert_{L^2},
\end{equation}
where $supp(f_i) \subseteq D_{k_i,\leq j_i}$, $j_i \geq j$ for $i=1,2$.\\
Again, the sum over $j$ is split into $k \leq j \leq 2k_1$, $j \geq 2k_1$. In the first case, we use duality and apply \eqref{eq:IsotropicEstimateZKHighLow} to find
\begin{equation*}
2^{k_1} \sum_{k \leq j \leq 2k_1} 2^{-j/2} 2^{j/2} 2^{k/2} 2^{-k_1/2} \prod_{i=1}^2 2^{j_i/2} \Vert f_i \Vert_{L^2} \lesssim (2k_1-k) 2^{k/2} \prod_{i=1}^2 2^{j_i/2} \Vert f_i \Vert_{L^2}
\end{equation*}
In the second case, estimate \eqref{eq:CauchySchwarzLocalizedFunctions} yields
\begin{equation}
\begin{split}
2^{k_1} \sum_{j \geq 2k_1} 2^{-j/2} \Vert 1_{D_{k,\leq j}} (f_1 * f_2) \Vert_{L^2} &\lesssim 2^{k_1} \sum_{j \geq 2k_1} 2^{-j/2} 2^k 2^{-k_1/2} \prod_{i=1}^2 2^{j_i/2} \Vert f_i \Vert_{L^2} \\
&\lesssim 2^{k_1/2} \prod_{i=1}^2 2^{j_i/2} \Vert f_i \Vert_{L^2}
\end{split}
\end{equation}
The proof is complete.
\end{proof}
\subsection{Energy estimates}
\label{subsection:PeriodicEnergyEstimates}
Purpose of this section is to propagate the energy norm of solutions and differences of solutions in terms of shorttime norms. We prove the following proposition:
\begin{proposition}
\label{prop:EnergyTransferPeriodicSolutions}
Let $T \in (0,1]$, $s>3/2$ and $u \in C([-T,T],H^\infty_0(\T^2))$ be a smooth solution to \eqref{eq:fractionalBenjaminOnoEquation} for $a=2$, $n=2$. Then we find the following estimate to hold:
\begin{equation}
\label{eq:energyEstimatePeriodicZK}
\Vert u \Vert^2_{E^s(T)} \lesssim \Vert u_0 \Vert_{H^s}^2 + T \Vert u \Vert_{F^s(T)}^3
\end{equation}
For two solutions to \eqref{eq:fractionalBenjaminOnoEquation} $u_i \in C([-T,T],H^\infty_0)$ with initial data $\phi_i$, $i=1,2$, the function $v=u_1-u_2$ satisfies the estimate
\begin{equation}
\label{eq:energyEstimatePeriodicZKDifferencesL2}
\Vert v \Vert^2_{E^0(T)} \lesssim \Vert v_1 - v_2 \Vert_{L^2}^2 + T \Vert v \Vert_{F^0(T)}^2 ( \Vert u_1 \Vert_{F^s(T)} + \Vert u_2 \Vert_{F^s(T)})
\end{equation}
and
\begin{equation}
\label{eq:energyEstimatePeriodicZKDifferencesHs}
\Vert v \Vert^2_{E^s(T)} \lesssim \Vert v_0 \Vert^2_{H^s} + T \Vert v \Vert^3_{F^s(T)} + T \Vert v \Vert_{F^0(T)} \Vert v \Vert_{F^s(T)} \Vert u_2 \Vert_{F^{2s}(T)}
\end{equation}
The above estimates also remain valid after replacing $E^s$, $F^s$ with $E^s_x$, $F^s_x$, respectively.
\end{proposition}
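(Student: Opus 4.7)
The plan is to model the argument on the corresponding energy estimate in Euclidean space (Proposition labeled \eqref{eq:EnergyEstimatesSolutions}), with the Strichartz-type bilinear inputs replaced by the periodic bilinear convolution estimates \eqref{eq:IsotropicEstimateZKHighLow}, \eqref{eq:IsotropicEstimateZKHighHighHigh} and \eqref{eq:CauchySchwarzLocalizedFunctions} derived in Subsection \ref{subsection:PeriodicBilinearEstimates} (and their anisotropic counterparts using \eqref{eq:anisotropicEstimate}). First I would apply the fundamental theorem of calculus to a smooth solution to obtain
\begin{equation*}
\Vert P_k u(t) \Vert_{L^2}^2 = \Vert P_k u_0 \Vert_{L^2}^2 + \int_0^t \int_{\T^2} P_k u \cdot P_k \partial_{x_1}(u^2) \, dx \, ds,
\end{equation*}
decompose $u^2$ by Littlewood-Paley and reduce to trilinear forms $\int\int P_{k_1} u \, \partial_{x_1}(P_{k_2} u \, P_{k_3} u) \, dx \, dt$ under the usual three interaction types High$\times$Low$\to$High, High$\times$High$\to$High and High$\times$High$\to$Low.

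The second step is to restrict each such integral to the natural frequency dependent time interval $|I| \sim 1$ (since $a=2$ means $T(N)=N^{a-2}=1$) and then to pass to the space-time Fourier side: writing each factor as $P_{k_i} u = \mathcal{F}^{-1} f_i$ with $f_i$ supported on $D_{k_i,\le j_i}$ (using the modulation decompositions and the $X_{k}$ structure), the trilinear form becomes a convolution integral $\int (f_1 * f_2) f_3$ to which the estimates of the previous subsection apply. In the High$\times$Low$\to$High case one integrates by parts to spread the derivative, generating a commutator which is harmless; then \eqref{eq:IsotropicEstimateZKHighLow} supplies the bilinear gain, and summing over modulations as in the nonlinear estimate yields a bound by $T \cdot 2^{s_a k_2} (k_2/k_1)^{0+} \prod_i \Vert P_{k_i} u \Vert_{F_{k_i}}$. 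In the High$\times$High$\to$High case the symmetry of the trilinear form and \eqref{eq:IsotropicEstimateZKHighHighHigh} yield the analogous bound, and High$\times$High$\to$Low is handled by \eqref{eq:CauchySchwarzLocalizedFunctions} after additional time subdivision to intervals of length $2^{-k_1}$ (as in the proof of \eqref{eq:ShorttimeNonlinearZKHighHighLowInteraction}). Multiplying by $2^{2sk}$ and summing in the three dyadic parameters closes for $s>3/2$.

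For the difference estimates I would set $v = u_1 - u_2$, which satisfies
\begin{equation*}
\partial_t v + \partial_{x_1} \Delta v = \tfrac{1}{2}\partial_{x_1}\bigl(v(u_1+u_2)\bigr),
\end{equation*}
and repeat the energy identity for $\Vert P_k v(t) \Vert_{L^2}^2$. The key point for \eqref{eq:energyEstimatePeriodicZKDifferencesL2} is that, in the resulting trilinear expressions, two of the three factors are $v$'s, so in the High$\times$Low$\to$High and High$\times$High$\to$High geometries each $v$ enters linearly and one can absorb the two copies of $v$ into $\Vert v \Vert_{F^0(T)}^2$ while placing the $u_i$ in $F^s(T)$; integration by parts is available in the only case where $v$ and $v$ sit on the two high frequencies. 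For \eqref{eq:energyEstimatePeriodicZKDifferencesHs} the only genuinely new interaction is a High$\times$Low$\to$High term of the form $P_N v \cdot \partial_{x_1}(P_N u_2 \cdot P_K v)$ with $K\lesssim N$ where integration by parts is no longer useful; one distributes the regularity asymmetrically, using $\Vert P_N v \Vert_{F^s}$ together with $\Vert P_K v \Vert_{F^0}$ and paying with $\Vert P_N u_2 \Vert_{F^{2s}}$, exactly as in the proof of \eqref{eq:ContinuityHs}.

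The main obstacle is the High$\times$High$\to$High interaction, where no integration by parts gain is available and one relies entirely on the resonance identity built into \eqref{eq:IsotropicEstimateZKHighHighHigh}; one must verify that the combination of the modulation gain, the output-frequency summation and the short time interval $|I|\sim 1$ indeed produces a net bound of the form $T \cdot 2^{(3/2+\varepsilon)k}\prod_i \Vert P_{k_i}u\Vert_{F_{k_i}}$, which is precisely where the threshold $s>3/2$ (versus $s_a=1/2$ on $\R^2$) is forced. The anisotropic statement (with $E^s_x, F^s_x$) is proved identically, substituting the anisotropic bilinear estimate \eqref{eq:anisotropicEstimate} in place of \eqref{eq:IsotropicEstimateZKHighLow}/\eqref{eq:IsotropicEstimateZKHighHighHigh}, with the same combinatorial bookkeeping.
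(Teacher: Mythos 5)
Your proposal follows the same overall strategy as the paper: energy identity via the fundamental theorem of calculus, Littlewood--Paley decomposition into High$\times$Low$\to$High, High$\times$High$\to$High and High$\times$High$\to$Low interactions, reduction to convolution integrals $\int(f_1*f_2)f_3$ on the space-time Fourier side, and application of the periodic bilinear resonance estimates \eqref{eq:IsotropicEstimateZKHighLow}, \eqref{eq:IsotropicEstimateZKHighHighHigh}, \eqref{eq:CauchySchwarzLocalizedFunctions} (with \eqref{eq:anisotropicEstimate} in the anisotropic case); for the difference estimates you correctly identify the exceptional $P_N v\,\partial_{x_1}(P_N u_2\,P_K v)$ term, where integration by parts is unavailable, as the source of the $\Vert u_2\Vert_{F^{2s}}$ loss. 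The paper packages the core work into an auxiliary lemma producing the bounds \eqref{eq:FrequencyLocalizedEnergyEstimateZKI} and \eqref{eq:FrequencyLocalizedEnergyEstimateZKII}, which you describe in-line, but the substance is the same.

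One point in your proposal is off and is worth correcting because it obscures the mechanism that produces the threshold $s>3/2$. You assert that the natural frequency-dependent time interval is $|I|\sim 1$ ``since $a=2$ means $T(N)=N^{a-2}=1$.'' That reasoning applies only in the Euclidean setting, where the bilinear Strichartz gain is available on unit intervals and the shorttime structure degenerates at $a=2$. In the periodic construction the $F_k$-norm always carries a time cutoff $\eta_0(2^k(t-t_k))$, i.e.\ time localization to scale $2^{-k}$, and the trilinear estimate requires splitting $[0,T]$ into $\sim T\,2^{k_{\max}}$ subintervals of length $2^{-k_{\max}}$ (the partition of unity $\gamma$ in the paper's lemma). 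This is precisely because the periodic bilinear estimates are genuinely resonance/convolution-support counting arguments, not dispersive estimates, and the transversality only becomes effective after localizing to the shortest involved time scale. The factor $T\,2^{k_{\max}}$ from the number of subintervals, combined with the derivative $2^k$ and the bilinear gain, is what forces $s>3/2$ here, versus $s>1/2$ on $\R^2$; your final paragraph states the right threshold, but the explanation via $T(N)=1$ should be replaced by the $2^{-k_{\max}}$ subdivision intrinsic to the periodic shorttime $X^{s,b}$-spaces.
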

The proof will be carried out by estimating the energy transfer in the following way: Suppose that $u$ is a smooth solution to
\begin{equation}
\label{eq:ForcedZK}
\partial_t u + \partial_{x_1} \Delta u = v
\end{equation}
Then we find for the evolution of the $L^2$-norm of the frequencies
\begin{equation*}
\Vert P_k u(t_k) \Vert^2_{L^2} = \Vert P_k u(0) \Vert^2_{L^2} + 2 \int_{\T^2 \times [0,t_k]} P_k u \tilde{P}_k v dx dy dt
\end{equation*}
The key estimates are carried out in the following lemma:
\begin{lemma}
Let $T>0$, $u_i \in F_{k_i}(T)$, $i=1,2,3$. We find the following estimate to hold:
\begin{equation}
\label{eq:FrequencyLocalizedEnergyEstimateZKI}
\left| \int_{\T^2 \times [0,T]} u_1 u_2 u_3 dx dy dt \right| \lesssim T 2^{k_{\min}/2} \prod_{i=1}^3 \Vert u_i \Vert_{F_{k_i}(T)} 
\end{equation}
Suppose that $k_1 < k -10$. Then we find the following estimate to hold:
\begin{equation}
\label{eq:FrequencyLocalizedEnergyEstimateZKII}
\left| \int_{\T^2 \times [0,T] } \tilde{P}_k u \partial_{x_1} \tilde{P}_k( u \tilde{P}_{k_1} v) dx dy dt \right| \lesssim T 2^{3k_1/2} \sum_{|m_1-k_1| \leq 5} \Vert v \Vert_{F_{k_1}(T)} \sum_{|k^\prime - k | \leq 10} \Vert P_{k^\prime} u \Vert^2_{F_{k^\prime}(T)}
\end{equation}
Furthermore, estimates \eqref{eq:FrequencyLocalizedEnergyEstimateZKI} and \eqref{eq:FrequencyLocalizedEnergyEstimateZKII} hold true after replacing $\tilde{P}_k$ with $\tilde{P}_{k,x}$ and $F_{k_i}(T)$ with $F_{k_i,x}(T)$.
\end{lemma}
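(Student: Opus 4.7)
The plan is to reduce both estimates to the Fourier-analytic convolution bounds \eqref{eq:IsotropicEstimateZKHighLow}, \eqref{eq:IsotropicEstimateZKHighHighHigh}, and \eqref{eq:CauchySchwarzLocalizedFunctions}, following the same template that was used in the proof of Proposition \ref{prop:nonlinearEstimateZK2d}. For \eqref{eq:FrequencyLocalizedEnergyEstimateZKI} I would first partition $[0,T]$ into $\lesssim T\cdot 2^{k_{\max}}$ subintervals of length $2^{-k_{\max}}$, where $k_{\max}=\max_i k_i$, and insert smooth time cut-offs $\gamma(2^{k_{\max}}(t-t_m))$ on each piece. The multiplier properties of the $X_k$-spaces recalled in the previous subsection ensure this operation is bounded by the $F_{k_i}(T)$-norm, and it forces the space-time Fourier transform of every factor to have modulation $\gtrsim 2^{k_{\max}}$.

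After this reduction, on each subinterval the problem becomes a trilinear convolution integral to which the appropriate transversality-based estimate applies: \eqref{eq:IsotropicEstimateZKHighLow} (or its symmetric counterpart obtained by relabeling) in the $High\times Low\to High$ and $High\times High\to Low$ configurations, and \eqref{eq:IsotropicEstimateZKHighHighHigh} in the $High\times High\to High$ case. After decomposing each $f_i=\sum_{j_i\geq k_{\max}} f_{i,j_i}$, every one of these estimates produces, once the surplus factors $2^{-j_i/2}$ are extracted using $j_i\geq k_{\max}$, a bound of the form
\[
\bigl|\!\int (f_{1,j_1}*f_{2,j_2})f_{3,j_3}\bigr|\lesssim 2^{k_{\min}/2}2^{-k_{\max}}\prod_{i=1}^3 2^{j_i/2}\Vert f_{i,j_i}\Vert_{L^2}.
\]
Summation of $\sum_{j_i}2^{j_i/2}\Vert f_{i,j_i}\Vert_{L^2}$ collapses to $\Vert u_i\Vert_{F_{k_i}(T)}$, and after multiplying by the $T\cdot 2^{k_{\max}}$ subintervals the desired $T\cdot 2^{k_{\min}/2}\prod_i\Vert u_i\Vert_{F_{k_i}(T)}$ follows.

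For \eqref{eq:FrequencyLocalizedEnergyEstimateZKII}, a naive application of \eqref{eq:FrequencyLocalizedEnergyEstimateZKI} loses a full derivative $2^k$ from $\partial_{x_1}$, so a symmetrization/commutator argument is required. I would decompose $u=\sum_{k^\prime}P_{k^\prime}u$ and note that by the Fourier support condition only $|k^\prime-k|\leq 10$ contributes nontrivially inside $\tilde{P}_k(u\,\tilde{P}_{k_1}v)$. Writing $\tilde{P}_k((P_{k^\prime}u)\tilde{P}_{k_1}v)=(\tilde{P}_k P_{k^\prime}u)(\tilde{P}_{k_1}v)+[\tilde{P}_k,\tilde{P}_{k_1}v]P_{k^\prime}u$, the commutator gains a factor $2^{k_1-k}$ from the smoothness of the symbol of $\tilde{P}_k$ together with the low-frequency localization of $v$, which already absorbs the derivative loss. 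For the principal term, integration by parts in $x_1$ together with the cancellation
\[
\int \tilde{P}_k u\,\partial_{x_1}\bigl((\tilde{P}_k P_{k^\prime}u)(\tilde{P}_{k_1}v)\bigr)\,dxdy = \tfrac12\int (\tilde{P}_k u)(\tilde{P}_k P_{k^\prime}u)\,\partial_{x_1}\tilde{P}_{k_1}v\,dxdy+\text{error}
\]
transfers the derivative onto $\tilde{P}_{k_1}v$, reducing the derivative cost from $2^k$ to $2^{k_1}$. Invoking \eqref{eq:FrequencyLocalizedEnergyEstimateZKI} in the $High\times High\to Low$ configuration with $k_{\min}=k_1$ yields $T\cdot 2^{k_1/2}\cdot 2^{k_1}=T\cdot 2^{3k_1/2}$, and the sum over $|k^\prime-k|\leq 10$ reproduces $\sum_{|k^\prime-k|\leq 10}\Vert P_{k^\prime}u\Vert^2_{F_{k^\prime}(T)}$ after a Cauchy-Schwarz step.

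The main obstacle will be the careful bookkeeping of the commutator terms and of the sharp-vs.-smooth cut-off transitions: one must check that the errors generated by replacing $u$ by $\sum_{k^\prime\sim k}P_{k^\prime}u$ inside $\tilde{P}_k(\cdot)$, by symmetrizing the two copies of the high-frequency factor, and by the commutator $[\tilde{P}_k,\tilde{P}_{k_1}v\,\cdot\,]$ all satisfy the same bound with uniform constants independent of $k,k_1$. The anisotropic variant with $\tilde{P}_{k,x}$ in place of $\tilde{P}_k$ and $F_{k_i,x}(T)$ in place of $F_{k_i}(T)$ is handled identically, using \eqref{eq:anisotropicEstimate} as the basic bilinear ingredient in lieu of the isotropic convolution estimates.
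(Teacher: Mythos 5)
Your proposal follows the same overall strategy as the paper: localize time at the maximal frequency scale and invoke the bilinear convolution bounds for \eqref{eq:FrequencyLocalizedEnergyEstimateZKI}, and use a commutator plus symmetrization argument for \eqref{eq:FrequencyLocalizedEnergyEstimateZKII}. Two points require care before the argument closes.

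For \eqref{eq:FrequencyLocalizedEnergyEstimateZKI}, the appeal to ``the multiplier properties of the $X_k$-spaces'' does not cover the sharp cutoff $1_{[0,T]}(t)$: the boundedness recalled earlier in the paper, namely $\Vert\mathcal{F}_{t,x}[\gamma(2^l t-t_0)\mathcal{F}_{t,x}^{-1}(f)]\Vert_{X_k}\lesssim_\gamma\Vert f\Vert_{X_k}$, is stated for Schwartz $\gamma\in\mathcal{S}(\R)$. After inserting the smooth partition $\sum_{n}\gamma^3(2^{k_{\max}}t-n)\equiv 1$ one must still contend with $1_{[0,T]}\gamma(2^{k_{\max}}\cdot-n)$ for the finitely many boundary indices $n$ where $\gamma(2^{k_{\max}}\cdot-n)1_{[0,T]}\neq\gamma(2^{k_{\max}}\cdot-n)$. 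The paper isolates this set (calling it $A$, with $|A|\lesssim 1$) and treats it via the weaker but sufficient fact that sharp cutoffs are almost bounded on $X_k$, i.e.\ $\sup_j 2^{j/2}\Vert\eta_j(\tau-\omega(\xi,\eta))f_k^I\Vert_{L^2}\lesssim\Vert f_k\Vert_{X_k}$, applied to the one factor carrying the sharp cutoff, followed by Cauchy--Schwarz in the modulation variable. Without this step your argument does not close on the boundary intervals.

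For \eqref{eq:FrequencyLocalizedEnergyEstimateZKII}, the symmetrization identity you invoke is not exact because $\tilde{P}_k u$ and $\tilde{P}_k P_{k'}u$ are distinct functions. The paper first commutes the projection off the product, writing $\tilde{P}_k(\partial_{x_1}u\,\tilde{P}_{k_1}v)=(\tilde{P}_k\partial_{x_1}u)\tilde{P}_{k_1}v+[\tilde{P}_k,\tilde{P}_{k_1}v\,\cdot\,]\partial_{x_1}u$; only after this does the principal term have both high-frequency factors equal to $\tilde{P}_k u$, so that $\int(\tilde{P}_ku)(\tilde{P}_k\partial_{x_1}u)\,w=-\tfrac12\int(\tilde{P}_ku)^2\,\partial_{x_1}w$ is an exact identity and the leftover terms are genuine commutators with regular bounded multipliers. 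The commutator gain $2^{k_1-k}$ you identify is correct, and the remaining steps (application of \eqref{eq:IsotropicEstimateZKHighLow}, and of \eqref{eq:anisotropicEstimate} in the anisotropic case) match the paper. The order of operations --- commute first, symmetrize second --- is what makes all error terms commutators with a clean smallness factor; your version as written would need to account separately for the mismatch between $\tilde{P}_k u$ and $\tilde{P}_k P_{k'}u$.
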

\begin{proof}
We start with the proof of the isotropic estimates. By symmetry we can assume that $k_1 \leq k_2 \leq k_3$. Let $\tilde{u}_i \in F_{k_i}$ with $\Vert \tilde{u}_i \Vert_{F_{k_i}} \leq 2 \Vert u_i \Vert_{F_{k_i}(T)}$, $i=1,2,3$ from the definitions.\\
The $\tilde{u}_i$ will be denoted by $u_i$ to lighten the notation. In order to estimate the functions in the shorttime function spaces time has to be localized according to the highest frequency. Let $\gamma: \R \rightarrow [0,1]$ be a smooth function supported in $[-1,1]$ with
\begin{equation*}
\sum_{n \in \Z} \gamma^3(x-n) \equiv 1 \quad \forall x \in \R
\end{equation*}
The lhs of \eqref{eq:FrequencyLocalizedEnergyEstimateZKI} is dominated by
\begin{equation}
\label{eq:FrequencyLocalizedEnergyEstimateReduction}
\begin{split}
&\sum_{|n| \leq CT_0 2^{k_3}} \left| \sum_{j_i \geq k_i} \int_{\Z^2 \times \R} \eta_{j_1}(\tau- \omega(\xi,\eta)) \mathcal{F}_{t,x}(u_1 \gamma(2^{k_3} t -n) 1_{[0,T]}(t) ) \right. \\
& \left. ( \eta_{j_2}(\tau - \omega(\xi,\eta)) \mathcal{F}[u_2 \gamma(2^{k_3} t -n)]) * (\eta_{j_3} \mathcal{F}[u_3 \gamma(2^{k_3} t -n)]) d\xi d\eta d\tau  \right| \\
&= \sum_{n \in A} (\ldots) + \sum_{n \in B} (\ldots),
\end{split}
\end{equation}
where
\begin{equation*}
\begin{split}
A &= \{ n \in \Z | \gamma(2^{k_3} \cdot -n) 1_{[0,T]} \neq \gamma(2^{k_3} \cdot - n) \},\\
B &= \{ n \in \Z | \gamma(2^{k_3} \cdot - n) 1_{[0,T]} = \gamma(2^{k_3} \cdot -n) \}
\end{split}
\end{equation*}
In \eqref{eq:FrequencyLocalizedEnergyEstimateReduction} read $\eta_{j_i} = \eta_{\leq j_i}$; it is sufficient to derive bounds for this modulation variable decomposition according to \eqref{eq:LowModulationEstimate}.\\
Apparently, $|A| \leq 10$, $|B| \leq C_0 T 2^{k_3}$. The main contribution of $B$ is handled first.\\
Denote
\begin{equation*}
f_i = \eta_{j_i}(\tau - \omega(\xi,\eta)) \mathcal{F}_{t,x}[ u_1 \gamma(2^{k_3} t - n) 1_{[0,T]}(t) ], \quad i =1,2,3
\end{equation*}
We do not distinguish between different values of $n$ because the following estimates are independent of $n$.\\
In case $k_1 \leq k_2 - 10$ an application of \eqref{eq:IsotropicEstimateZKHighLow} yields
\begin{equation*}
\begin{split}
\sum_{n \in B} (..) &\lesssim T 2^{k_3} \sum_{j_i \geq k_i} 2^{j_1/2} 2^{k_1/2} (1+2^{\frac{j_2-2k_3}{2}}) \prod_{i=1}^3 \Vert f_i \Vert_{L^2} \\
&\lesssim T 2^{k_1/2} \sum_{j_i \geq k_i} \prod_{i=1}^3 2^{j_i/2} \Vert f_i \Vert_{L^2}
\end{split}
\end{equation*}
because $|k_2-k_3| \leq 5$ and $j_3 \geq k_3$.\\
In case $|k_1 - k_2| \leq 5$, $|k_2-k_3| \leq 5$ an application of \eqref{eq:IsotropicEstimateZKHighHighHigh} gives
\begin{equation*}
\begin{split}
\sum_{n \in B} (..) &\lesssim T 2^{k_3} \sum_{j_i \geq k_i} 2^{-k_1/2} \prod_{i=1}^3 2^{j_i/2} \Vert f_i \Vert_{L^2} \\
&\lesssim T 2^{k_3/2} \prod_{i=1}^3 \sum_{j_i \geq k_i} 2^{j_i/2} \Vert f_i \Vert_{L^2}
\end{split}
\end{equation*}
For the boundary terms note that sharp cutoffs in time are almost bounded in $X_k$, that is for an interval $I \subseteq \R$, $k \in \mathbb{N}_0$, $f_k \in X_k$ and $f_k^I = \mathcal{F}(1_I(t) \mathcal{F}^{-1}(f_k))$ (cf. \cite[p.~267]{Zhang2016})
\begin{equation*}
\sup_{j \in \mathbb{N}} 2^{j/2} \Vert \eta_j(\tau - \omega(\xi,\eta)) f_k^I \Vert_{L^2} \lesssim \Vert f_k \Vert_{X_k}
\end{equation*}
An application of Cauchy-Schwarz yields
\begin{equation*}
\begin{split}
\sum_{n \in B} (..) &\lesssim \sum_{j_i \geq k_i} 2^{j_1/2} 2^{k_1} \prod_{i=1}^3 \Vert f_i \Vert_{L^2} \\
&\lesssim 2^{k_1} 2^{-k_3/2} \prod_{i=1}^2 \sum_{j_i \geq k_i} 2^{j_i/2} \Vert f_i \Vert_{L^2} \sup_{j \in \mathbb{N}} 2^{j/2} \Vert f_j \Vert_{L^2}
\end{split}
\end{equation*}
which yields the claim.\\
For the proof of \eqref{eq:FrequencyLocalizedEnergyEstimateZKII} we integrate by parts (cf. \cite{IonescuKenigTataru2008}) to find
\begin{equation*}
\begin{split}
&\left| \int_{\T^2 \times [0,T]} \tilde{P}_k u \tilde{P}_k (\partial_{x_1} u \tilde{P}_{k_1} v) dx dy dt \right| \\
&\leq \left| \int_{\T^2 \times [0,T]} \tilde{P}_k u \tilde{P}_k (\partial_{x_1} u) \tilde{P}_{k_1} v dx dy dt \right| + C \sum_{i=1}^2 \left| \int_{\Z^2 \times \R} \mathcal{F}(\tilde{P}_k u)(\xi,\eta,\tau) \right. \\
&\times \left. \int_{\Z^2 \times \R} \mathcal{F}(\tilde{P}_{k_1} \partial_{x_i} v)(\xi_1,\eta_1,\tau_1) \mathcal{F} v(\xi-\xi_1,\eta-\eta_1,\tau-\tau_1) \psi_i(\xi,\xi_1,\eta,\eta_1) d\xi_{1} d\eta_1 d\tau_1 d\xi d\eta d\tau \right|
\end{split}
\end{equation*}
where $\psi_{i}$, $i=1,2$ are bounded and regular multipliers. The resulting expressions can be handled by \eqref{eq:IsotropicEstimateZKHighLow}.\\
In the anisotropic case we use similar arguments, but use \eqref{eq:anisotropicEstimate} instead to conclude \eqref{eq:FrequencyLocalizedEnergyEstimateZKI} and the commutator estimate for \eqref{eq:FrequencyLocalizedEnergyEstimateZKII} is actually easier because there are no derivatives in $x_2$-direction involved.
\end{proof}
We are ready to prove Proposition \ref{prop:EnergyTransferPeriodicSolutions}.
\begin{proof}[Proof of Proposition \ref{prop:EnergyTransferPeriodicSolutions}]
Following the remark after Proposition \ref{prop:EnergyTransferPeriodicSolutions} we find for a solution to \eqref{eq:fractionalBenjaminOnoEquation} 
\begin{equation*}
\Vert \tilde{P}_k u(t_k) \Vert_{L^2}^2 = \Vert \tilde{P}_k u(0) \Vert^2_{L^2} + 2 \int_0^T ds \int_{\T^2} dx dy \tilde{P}_k u \tilde{P}_k ( \partial_{x_1} u^2)  
\end{equation*}
For the integral we consider the following interactions:
$High \times Low \rightarrow High$:
\begin{equation}
\label{eq:FrequencyLocalizedEnergyEstimateHighLowHigh}
\int_0^T ds \int_{\T^2} dx dy \tilde{P}_k u \tilde{P}_k (\partial_{x_1} u \tilde{P}_{k_1}) \quad (k_1 \leq k-10)
\end{equation}
$High \times High \rightarrow High$:
\begin{equation}
\label{eq:FrequencyLocalizedEnergyEstimateHighHighHigh}
\int_0^T ds \int_{\T^2} dx dy \tilde{P}_k u \tilde{P}_k (\partial_{x_1} u \tilde{P}_{k_1} u) \quad (|k-k_1| \leq 5)
\end{equation}
$High \times High \rightarrow Low$:
\begin{equation}
\label{eq:FrequencyLocalizedEnergyEstimateHighHighLow}
\int_{0}^T ds \int_{\T^2} dx dy \tilde{P}_k u \partial_{x_1} (\tilde{P}_{k_1} u \tilde{P}_{k_2} u) \quad ( k \leq k_1-10, \; |k_1-k_2| \leq 5)
\end{equation}
$High \times Low \rightarrow High$-interaction is estimated by \eqref{eq:FrequencyLocalizedEnergyEstimateZKII} to
\begin{equation*}
\eqref{eq:FrequencyLocalizedEnergyEstimateHighLowHigh} \lesssim T 2^{3k_1/2} \sum_{|m-k| \leq 5} \Vert P_m u \Vert^2_{F_m(T)} \sum_{|m_1-k_1| \leq 5} \Vert P_{m_1} u \Vert_{F_{m_1}(T)}
\end{equation*}
and summing over $k_1 \leq k-10$ and square summing over $k$ gives \eqref{eq:energyEstimatePeriodicZK}.\\
In case of $High \times High \rightarrow High$-interaction estimate \eqref{eq:FrequencyLocalizedEnergyEstimateZKI} is used to obtain
\begin{equation*}
\eqref{eq:FrequencyLocalizedEnergyEstimateHighHighHigh} \lesssim T 2^{3k/2} \sum_{|m-k| \leq 5} \Vert P_m u \Vert^2_{F_m(T)} \sum_{|m_1-k_1| \leq 10} \Vert P_{m_1} u \Vert_{F_{m_1}(T)}
\end{equation*}
and square summing over $k$ gives \eqref{eq:energyEstimatePeriodicZK}.\\
$High \times High \rightarrow Low$-interaction is handled similarly; again, there is no point in rearranging the derivative.\\
To prove \eqref{eq:FrequencyLocalizedEnergyEstimateZKII} we write
\begin{equation*}
\begin{split}
\Vert \tilde{P}_k v(t_k) \Vert^2_{L^2} = \Vert \tilde{P}_k v(t_k) \Vert^2_{L^2} &= \Vert \tilde{P}_k(u_1-u_2)(0) \Vert^2_{L^2} \\
&+ 2 \int_0^T ds \int_{\T^2} dx dy \tilde{P}_k v \tilde{P}_k \partial_{x_1} (v(u_1+u_2))
\end{split}
\end{equation*}
and estimate $High \times High \rightarrow High$-interaction and $High \times High \rightarrow Low$-interaction like above to obtain \eqref{eq:FrequencyLocalizedEnergyEstimateZKII}. In case of $High \times Low \rightarrow High$-interaction one finds two different terms:
\begin{equation}
\label{eq:EnergyEstimateHighLowHighDifferencesSolutions}
\int_0^T ds \int_{\T^2} dx dy \tilde{P}_k v \tilde{P}_k (\partial_{x_1} v \tilde{P}_{k_1} (u_1+u_2)) \quad (k_1 \leq k-10)
\end{equation}
and
\begin{equation}
\label{eq:EnergyEstimateHighLowHighDifferencesSolutionsII}
\int_0^T ds \int_{\T^2} dx dy \tilde{P}_k v \tilde{P}_k (\partial_{x_1} (u_1+u_2) \tilde{P}_{k_1} v) \quad (k_1 \leq k -10)
\end{equation}
\eqref{eq:EnergyEstimateHighLowHighDifferencesSolutions} is estimated along the above lines because we can integrate by parts to arrange the derivative on the smallest frequency.\\
For \eqref{eq:EnergyEstimateHighLowHighDifferencesSolutionsII} we use estimate \eqref{eq:FrequencyLocalizedEnergyEstimateZKI} instead to find
\begin{equation*}
\begin{split}
\eqref{eq:EnergyEstimateHighLowHighDifferencesSolutionsII} &\lesssim 2^k 2^{k_1/2} \sum_{|m-k| \leq 5} \Vert P_m v \Vert_{F_m(T)} \sum_{|m-k| \leq 5} (\Vert P_m u_1 \Vert_{F_m(T)} + \Vert P_m u_2 \Vert_{F_m(T)}) \\
&\times \sum_{|m_1-k_1| \leq 5} \Vert P_{m_1} v \Vert_{F_{m_1}(T)}
\end{split}
\end{equation*}
and square summing in $k$ and summing over $k_1 \leq k -10$ gives \eqref{eq:energyEstimatePeriodicZKDifferencesL2}.\\
To prove \eqref{eq:energyEstimatePeriodicZKDifferencesHs} the solution to the difference equation is rewritten as
\begin{equation*}
\partial_t v + \partial_{x_1} \Delta v = \partial_{x_1} (v^2) + \partial_{x_1} (v u_2)
\end{equation*}
When estimating $\Vert v \Vert_{E^s(T)}$ for $s>3/2$ the contribution of $\partial_{x_1} (v^2)$ can be handled like in the proof of \eqref{eq:energyEstimatePeriodicZK}, which gives
\begin{equation*}
\sum_k 2^{2ks} \int_0^T ds \int_{\T^2} dx dy \tilde{P}_k v \tilde{P}_k \partial_{x_1} (v^2) \lesssim T \Vert v \Vert^3_{F^s(T)}
\end{equation*}
The contribution of $\partial_{x_1}(v u_2)$ can be treated like in the proof of \eqref{eq:energyEstimatePeriodicZK} except for the interaction
\begin{equation*}
\int_0^T ds \int_{\T^2} dx dy \tilde{P}_k v \tilde{P}_k \partial_{x_1} (u_2 \tilde{P}_{k_1} v) \quad (k_1 \leq k -10)
\end{equation*}
because here we can not integrate by parts like above. Instead estimate \eqref{eq:FrequencyLocalizedEnergyEstimateZKI} and square summing in $k$ and summation in $k_1 \leq k -10$ gives
\begin{equation*}
\sum_{k, k_1 \leq k-10} 2^{2ks} \int_0^T ds \int_{\T^2 } dx dy \tilde{P}_k v \tilde{P}_k \partial_{x_1} (u_2 \tilde{P}_{k_1} v) \lesssim T \Vert v \Vert_{F^s(T)} \Vert u_2 \Vert_{F^{2s}(T)} \Vert v \Vert_{F^0(T)}
\end{equation*}
In the anisotropic case the same strategy applies after deploying the energy estimate for anisotropic frequency localization.
\end{proof}

\subsection{Proof of Theorem \ref{thm:LocalWellposednessPeriodicCase}}
\begin{proof}[Proof of Theorem \ref{thm:LocalWellposednessPeriodicCase}]
Fix $s>3/2$. Here, instead of rescaling to small initial values which are considered for large times like in the proof of Theorem \ref{thm:LocalWellposednessEuclideanSpace} we consider arbitrary initial data for small times as in \cite{Zhang2016}.\\
We only demonstrate the proof of a priori estimates for smooth initial values. The additionally required arguments to construct the data-to-solution mapping are like in the proof of Theorem \ref{thm:LocalWellposednessEuclideanSpace}. For $0<T \leq T_0$ we find for a smooth solution
\begin{equation*}
\left\{\begin{array}{cl}
\Vert u \Vert_{F^s(T)} &\lesssim \Vert u \Vert_{E^s(T)} + \Vert \partial_{x_1} (u^2) \Vert_{N^s(T)} \\
\Vert u \partial_{x_1} u \Vert_{N^s(T)} &\lesssim T_0^{1/4} \Vert u \Vert_{F^s(T)}^2 \\
\Vert u \Vert^2_{E^s(T)} &\lesssim \Vert u_0 \Vert_{H^s}^2 + T_0 \Vert u \Vert_{F^s(T)}^3 \end{array} \right.
\end{equation*}
This implies
\begin{equation*}
\Vert u \Vert^2_{F^s(T)} \lesssim \Vert u_0 \Vert^2_{H^s} + T_0^{1/4} \Vert u \Vert^2_{F^s(T)} + T_0 \Vert u \Vert^3_{F^s(T)}
\end{equation*}
and since
\begin{equation*}
\lim_{T \to 0} \Vert u \Vert_{E^s(T)} \lesssim \Vert u_0 \Vert_{H^s}, \quad \lim_{T \to 0} \Vert \partial_{x_1} (u^2) \Vert_{N^s(T)} = 0
\end{equation*}
as in \cite[Lemma~6.3,~p.~278]{Zhang2016}. After choosing $T_0 = T_0(\Vert u_0 \Vert_{H^s})$ one proves a priori estimates by a bootstrap argument.
\end{proof}

\bibliographystyle{amsxport}
\end{document}